\documentclass[12pt, a4paper, oneside]{article}
\usepackage{amsmath, amsthm, amssymb, bm, graphicx,  mathrsfs,geometry,color,cite}
\usepackage[title]{appendix} 
\usepackage{lineno} 
\usepackage{booktabs} 
\usepackage{array} 
\usepackage{caption}
\usepackage{subfigure}
\usepackage{titling} 
\usepackage{authblk}
\usepackage{etoolbox}

\usepackage[hyperfootnotes=false]{hyperref}

\newtheorem{theorem}{Theorem}[section]
\numberwithin{equation}{section}

\newtheorem{lemma}[theorem]{Lemma}

\newtheorem{proposition}[theorem]{Proposition}
\newtheorem{remark}[theorem]{Remark}

\usepackage{titlesec}
\titleformat{\section}
{\fontsize{14}{16}\selectfont\bfseries}
{\thesection}{1em}{}

\pretitle{\begin{center}\fontsize{20}{24}\selectfont\bfseries}      
	\posttitle{\end{center}\vskip 1em}           
\preauthor{\begin{center}\large}             
	\postauthor{\end{center}}                    
\predate{\begin{center}\small}               
	\postdate{\end{center}}                      
\date{}

\title{{Variational characterizations of weighted eigenvalue and basic reproduction ratio for nonlocal dispersal systems and application}}

\author[a]{Xiandong Lin}
\author[b]{Jiazhuo Cheng}
\author[a]{Qiru Wang \thanks{Corresponding Author: Email: mcswqr@mail.sysu.edu.cn}}

\affil[a]{School of Mathematics(Zhuhai), Sun Yat-sen University, Zhuhai 519082, Guangdong, P.R. China}
\affil[b]{School of Mathematics and Systems Science, Guangdong Polytechnic Normal University, Guangzhou 510665, Guangdong, P.R. China}

\date{}
\usepackage{sectsty}
\sectionfont{\fontsize{14}{16}\selectfont} 
\begin{document}
	
	\maketitle
	
	\vspace{-3em}
	
\begin{abstract}
		The basic reproduction ratio is a crucial threshold parameter in infectious disease models. In nonlocal dispersal systems, its variational characterization is challenging due to the possible absence of a principal eigenvalue caused by non-compactness. In this paper, we aim to establish such a characterization even when the principal eigenvalue does not exist. To this end, we first study the spectral bound of a class of nonlocal dispersal operators, establishing a Collatz–Wielandt characterization as well as a Rayleigh–Ritz  characterization when the operator is self-adjoint.  Using this, we characterize the unique parameter value at which the spectral bound equals zero, covering both non-degenerate and partially degenerate cases, and subsequently obtain an explicit expression for the basic reproduction ratio. To demonstrate the utility of our theoretical framework, we apply it to a nonlocal dispersal SIS epidemic model with saturated incidence rate. The analysis shows that, in the degenerate case of the saturation coefficient, the limiting behavior of the basic reproduction ratio as the total population tends to zero is strikingly different from that in local diffusion case.
\end{abstract}
		
\noindent
\textbf{Keywords:}
Nonlocal dispersal systems; weighted eigenvalue and basic reproduction ratio; variational characterizations; SIS model.

\noindent
\textbf{2020 MSC:} 45C05; 35K57; 45P05; 35R20.

\section{Introduction}

The basic reproduction ratio is a crucial threshold parameter in infectious disease models and other biological mathematical frameworks. The theory developed  by Zhang and Zhao \cite{MR4348165} provides a robust approach to connecting the basic reproduction ratio with associated principal eigenvalue problems. In particular, this framework enables us to characterize the limiting behavior of the basic reproduction ratio in terms of the limiting profiles of the principal eigenvalue(see, e.g., \cite{MR4628895, Feng_Li_Yang_2024, MR3945624, MR4553723, MR4620151}).

In recent years, nonlocal dispersal, typically modeled via integral operators, has been increasingly employed to capture long-range dispersal mechanisms, offering an alternative to traditional local diffusion models. However, the lack of compactness in the solution operators of these models introduces significant challenges for the study of their principal eigenvalues, which has consequently attracted substantial research interest and spurred the development of various analytical methods.

For instance, Shen and collaborators \cite{MR3285842, MR3637938, MR3000610} applied perturbation theory for positive semigroups, as introduced by B\"urger \cite{MR923493}. In a different line of inquiry, Coville \cite{MR2718672, MR3548277}, Zhang \cite{MR4803717, MR3906242, MR5005572}, and Li \cite{MR4929554, MR4754238, MR4104463, MR3632209}, along with their co-authors, employed generalizations of the Krein–Rutman theorem. Meanwhile, Wang and his group \cite{MR3583503, MR4601060, MR4612706} developed a Touching lemma to investigate eigenvalue problems for nonlocal dispersal operators.

In models based on random diffusion, the basic reproduction ratio can often be expressed via weighted eigenvalue problems.  For instance, in Allen et al. \cite{MR2379454}, the existence of a principal eigenvalue for the weighted eigenvalue problem
\[
d_{I}\Delta  u -\gamma(x)u = -\frac{ \beta(x)}{\lambda}u
\]
provided a Rayleigh–Ritz  characterization of the basic reproduction ratio as
\[
\mathcal{R}_0 = \sup_{\substack{\varphi \in H^1(\Omega) \\ \varphi \neq 0}} \left\{ \frac{\int_\Omega \beta \varphi^2}{\int_\Omega d_I |\nabla \varphi|^2 + \gamma \varphi^2} \right\}.
\]
In the context of nonlocal dispersal, Yang et al. \cite{MR3945624} derived a similar variational characterization under the assumption that a associated weighted eigenvalue problem admits a principal eigenvalue.

However, a fundamental challenge in this setting is that the principal eigenvalue may not exist for nonlocal dispersal operators. In this work, we show that such a variational characterization of the basic reproduction ratio can be obtained even in the absence of a principal eigenvalue when the  operator is self-adjoint. Moreover, we establish a Collatz–Wielandt characterization for the general case. This yields an explicit expression for the basic reproduction ratio and offers a powerful tool for analyzing its properties.

To this end, we introduce a family of nonlocal dispersal operators on functions $\phi: \overline \Omega \to \mathbb{R}^{m}$:
\[
[\mathcal{L}_{\mu} \phi](x) := D[\mathcal{J}\phi](x) + A(x)\phi(x) + \frac{1}{\mu} F(x)\phi(x), \quad  x\in \overline{\Omega},
\]
where $\mu>0$, $\Omega \subset \mathbb{R}^n$ is a smooth bounded domain, $D:=$diag$\left\lbrace d_{1},d_{2},\dots,d_{m}\right\rbrace $,  $\left[ \mathcal{J}u\right](x) :=(\int_{\Omega}J_{1}(x-y)u_{1}(y)\,\mathrm{d}y, \dots,  \int_{\Omega}J_{m}(x-y)u_{m}(y)\,\mathrm{d}y)^{T}$, $ A(x) = \bigl(a_{ij}(x)\bigr)_{m \times m} $ is a cooperative matrix and $ F(x) = \bigl(f_{ij}(x)\bigr)_{m \times m} $ is a non-zero, nonnegative matrix, with $a_{ij}, f_{ij}\in C(\overline\Omega, \mathbb{R})$.

According to the theory of the basic reproduction ratio \cite{MR3612966, MR3992071, MR4348165, MR2505085}, under suitable hypotheses, the basic reproduction ratio is the unique solution to $s(\mathcal{L}_{\mu}) = 0$, where $s(\mathcal{L}_{\mu})$ represents the  spectral bound of $ \mathcal{L}_{\mu}$.
To derive variational characterizations of the basic reproduction ratio, we first analyze the nonlocal dispersal operator:
\[
\left[ L\phi\right](x): =D \left[ \mathcal{J}\phi\right](x) + M(x)\phi(x), \quad  x\in \overline{\Omega},
\]
where $M(x)=\left( m_{ij}(x)\right)_{m\times m}$ is a  cooperative  matrix with $m_{ij}\in  C(\overline\Omega, \mathbb{R})$.

Building on the studies of principal eigenvalues for nonlocal dispersal operators, particularly the approximation techniques introduced in \cite[Lemma 4.2]{MR4929554} and \cite[Theorem A]{MR5005572}, we establish a Collatz–Wielandt characterization of the spectral bound for $L$ under a relaxed irreducibility condition on $M$, extending  the results in \cite{MR4929554, MR5005572}. We further demonstrate that the generalized eigenvalue coincides with the spectral bound, thereby clarifying the nature of generalized eigenvalues. Moreover, in the self-adjoint case, we provide a Rayleigh–Ritz characterization of the spectral bound.

Based on these characterizations of the spectral bound, we then characterize $\mu_0$, the unique value of $\mu$ satisfying $s(\mathcal{L}_{\mu}) = 0$, including both non-degenerate and partially degenerate cases (i.e., some $d_i = 0$), and subsequently obtain expressions for the basic reproduction ratio.

Finally, we apply our results to a nonlocal dispersal SIS epidemic model with saturated incidence rate. We establish Collatz–Wielandt and Rayleigh–Ritz characterizations of the basic reproduction ratio and examine the influence of model coefficients on its asymptotic behavior. In particular, when the saturation coefficient exhibits degeneracy, the limiting profiles of the basic reproduction ratio differ markedly from those in local diffusion models as the total population tends to zero.

The remainder of this paper is organized as follows. In Section 2, we study the spectral bound of $L$ and establish its Collatz–Wielandt and Rayleigh–Ritz characterizations. In Section 3, we characterize $\mu_{0}$, the unique solution to $s(\mathcal{L}_{\mu}) = 0$.
In Section 4, we extend these characterizations of $\mu_0$ to partially degenerate cases. In Section 5, we establish variational characterizations of the basic reproduction ratio. Finally, in Section 6, we consider an SIS epidemic model, providing  variational characterizations of its basic reproduction ratio and analyzing its limiting behavior.

\section{The spectrum bound}

In this section, we investigate variational characterizations of the spectral bound for $L$. We show that the spectral bound coincides with the generalized eigenvalue, which yields the Collatz-Wielandt characterization. Furthermore, for the self-adjoint case, we obtain a Rayleigh–Ritz characterization of the spectral bound.
Let   $\mathcal{S}:=\left\lbrace 1,2,\dots,m\right\rbrace $. We make the following assumptions on $D$ and $J_{i}$:

\begin{itemize}
	\item [(\textbf{J})] $J_{i}$ is a continuous nonnegative function with $J_{i}(0) > 0$, $\int_{\mathbb{R}^{n}} J_{i}(x)\, \mathrm{d}x = 1$, $J_{i} \in L^{\infty}(\mathbb{R}^{n})$, for all $i\in \mathcal{S}$;
	
	\item[(\textbf{D})] $d_{i}>0$ for all $i\in \mathcal{S}$.
\end{itemize}
In this section, we  always assume that (\textbf{J}) and (\textbf{D}) hold.

Define the norm on $ \mathbb{R}^{m}$ as $\left\| a\right\|_{\mathbb{R}^{m}}:= \sqrt{a^{T}a}$,
where $a=(a_{1},\dots,a_{m})^{T}\in\mathbb{R}^{m} $. We denote $X:=C(\overline\Omega,\mathbb{R}^{m})$,
equipped with the maximum norm
$\left\| \varphi\right\| _{X}:=\max\limits_{x\in\overline\Omega } \left\| \varphi(x)\right\|_{\mathbb{R}^{m}}$,
and the positive cone $X_{+}:=C(\overline\Omega,\mathbb{R}^{m}_{+})$.
Let $a,b\in\mathbb{R}^{m} $. We write $a-b\ge 0$ if $a-b\in\mathbb{R}^{m}_{+}$; $a-b> 0$ if
$a-b\in\mathbb{R}^{m}_{+}\setminus\left\{0\right\}$; and $a-b\gg 0$ if $a-b\in\mathrm{Int}(\mathbb{R}^{m}_{+})$. In particular,  we said $a$ is positive if $a\ge 0$, strictly positive if $a>0$, and strongly positive if $a\gg 0$.
And
denote
$L^{2}:=L^{2}(\overline{\Omega}, \mathbb{R}^{m})$ equipped with the norm
$ \left\| \phi\right\|_{L^{2}}:= \left( \int_{\Omega}\phi^{T}(x)\phi(x) \, \mathrm{d}x\right)^{\frac{1}{2}}$, and the positive cone $L^{2}_{+}:=\left\lbrace \phi\in L^{2}: \phi(x) \ge 0 \text{ for almost every } x \in \overline\Omega  \right\rbrace $.
Similar to the order defined in $\mathbb{R}^{m}$, we can define the partial order induced by the positive cone in $X$ and $L^{2}$, respectively.

Note that
the operator $L$ can act on either $X$ or $L^2$. We write $L_C$ for the former and $L_{L^2}$ for the latter to distinguish them. Subscripts may be omitted when the domain is clear from context or both interpretations hold.

Let $\sigma(L)$ denote the spectrum of $L$, and let $s(L)$ be its spectral bound, defined by
\[
s(L) := \sup \{\operatorname{Re} \mu : \mu \in \sigma(L)\}.
\]
We denote by $\mathcal{R}(L)$ and $\mathcal{N}(L)$ the range and null space of $L$, respectively. The operator $L$ is called a Fredholm operator if $\mathcal{R}(L)$ is closed and both
\[
\dim \mathcal{N}(L) < \infty \quad \text{and} \quad \operatorname{codim} \mathcal{R}(L) := \dim \big( E / \mathcal{R}(L) \big) < \infty,
\]
where $E$ denotes the space on which $L$ acts, i.e., $X$ or $L^2$ as appropriate.
Its Fredholm index is then defined as
\[
\operatorname{ind}(L) = \dim \mathcal{N}(L) - \operatorname{codim} \mathcal{R}(L).
\]
Following \cite[Section 7.5]{MR1861991}, the essential spectrum of $L$ is defined as
\[
\sigma_e(L) := \bigl\{ \lambda \in \sigma(L) : \lambda I - L \text{ is not a Fredholm operator of index zero} \bigr\}.
\]
The spectral radius and the essential spectral radius of $L$ are denoted by $r(L)$ and $r_e(L)$, respectively. Since Nussbaum \cite{MR264434} shows that various definitions of the essential spectral radius are equivalent, we may adopt the following one:
\[
{r_e}\left( L \right):= \sup \left\{ {\left| \lambda  \right|:\lambda  \in {\sigma _e}\left( L \right)} \right\}.
\]
$L$ is said to admit a principal eigenvalue if $s(L)$ is an eigenvalue whose associated eigenfunction is strictly  positive.
We introduce an assumption on $M$:
\begin{itemize}
	\item [(\textbf{M})] $M$ is weakly irreducible(fully coupled), in the sense that the index set $\mathcal{S} $ can not be split up in two disjoint nonempty sets  $\mathbb{I}$ and $\mathbb{J}$ such that $ m_{ij}(x) \equiv0$ in $\overline{\Omega}$ for all $i\in \mathbb{I}, j\in \mathbb{J}$.
\end{itemize}
In \cite{MR4612706,MR3583503,MR2718672,MR5005572,MR4601060,MR4929554,MR3548277} and references therein, the generalized principal eigenvalue of  $ L_{C}$ is defined by
\[
\begin{aligned}
	\bar \lambda_{p}: =& \inf \left\lbrace \lambda\in \mathbb{R}: \exists \phi \in \mathrm{Int}(X_{+}) \ s.t. \ \left[ L\phi\right](x)\le \lambda \phi(x), \forall x\in \overline{\Omega}  \right\rbrace\\[0.5em]
	=& \inf_{\phi \in \mathrm{Int}(X_{+})} \sup_{x \in \Omega, i \in \mathcal{S}} \frac{d_i \int_{\Omega} J_i(x- y) \phi_i(y) \,\mathrm{d}y + \sum_{j=1}^m m_{ij}(x)\phi_{j}(x) }{\phi_i(x)},
\end{aligned}
\]
or
\[
\begin{aligned}
	\underline \lambda_{p}:= & \sup \left\lbrace \lambda\in \mathbb{R}: \exists \phi \in \mathrm{Int}(X_{+}) \ s.t. \ \left[ L\phi\right](x)\ge \lambda \phi(x), \forall x\in \overline{\Omega}   \right\rbrace\\[0.5em]
	=& \sup_{\phi \in  \mathrm{Int}(X_{+})} \inf_{x \in \Omega, i \in \mathcal{S}} \frac{d_i \int_{\Omega} J_i(x- y) \phi_i(y) \,\mathrm{d}y + \sum_{j=1}^m m_{ij}(x)\phi_{j}(x) }{\phi_i(x)}.
\end{aligned}
\]
The latter expressions for $\bar{\lambda}_p$ and $\underline{\lambda}_p$ are known as the Collatz-Wielandt characterization.

We note that the equality $\bar{\lambda}_p = \underline{\lambda}_p$ has been established for cases where $M$ is strongly irreducible or  $M(x)$ is irreducible at some point \cite{MR4929554,MR4601060,MR5005572}.
In this section, we aim to show that $\bar{\lambda}_p = \underline{\lambda}_p = s(L_{C}) = s(L_{L^{2}})$ under assumption (\textbf{M}), which will allow us to clarify the intrinsic nature of the generalized eigenvalue.

We point out that assumption (\textbf{M}) ensures that when $L$ admits a principal eigenvalue, the corresponding eigenfunction is strongly positive(see \cite[Theorem 2.2]{MR4628895}), thereby guaranteeing $\bar \lambda_{p} \le s(L) \le \underline \lambda_{p}$ holds in the presence of a principal eigenvalue.

When assumption (\textbf{M}) does not hold, even if $L$ admits a principal eigenvalue, the associated eigenfunction is not strongly positive. In defining $\bar{\lambda}_p$ and $\underline{\lambda}_p$, the most straightforward adjustment is to enlarge the admissible set of test functions $\phi$ to include all strictly positive functions, rather than only strongly positive ones. However, in this case, the formulas can no longer be written in the latter form—the Collatz--Wielandt characterization. And the equality between $\bar{\lambda}_p$ and $\underline{\lambda}_p$ remains uncertain. This can be seen in the following example.

\noindent\textbf{Example:}
Consider $m=2$, $d_1=d_2=1$, $J_1=J_2$, and
$$
M(x)=\begin{pmatrix}2&0\\0&1\end{pmatrix}.
$$
Direct calculation gives $s(L_C) = \bar{\lambda}_p = \underline{\lambda}_p + 1$. However, if the test functions $\phi$ are enlarged to all strictly positive (not necessarily strongly positive) functions, then  $\bar{\lambda}_p + 1 = \underline{\lambda}_p = s(L_C)$.

Let $\mathcal{M}_C: X \to X$ and $\mathcal{M}_{L^2}: L^2 \to L^2$ be the multiplication operators given by $(\mathcal{M}_C\phi)(x)=M(x)\phi(x)$ for $\phi\in X$, and analogously for $\mathcal{M}_{L^2}$ with $\phi\in L^2$.

\begin{proposition}\label{pro2.1}
	The following statements hold:
	\begin{itemize}
		\item [\rm(i)] $\sigma_e(\mathcal{M}_{C}) = \sigma(\mathcal{M}_{C}) = \bigcup_{x \in \overline\Omega} \sigma(M(x))$;
		
		\item [\rm(ii)] $\sigma_e(\mathcal{M}_{L^{2}}) = \sigma(\mathcal{M}_{L^{2}}) = \bigcup_{x \in \overline\Omega} \sigma(M(x))$.
	\end{itemize}
\end{proposition}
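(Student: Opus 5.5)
We prove three inclusions for each space $E\in\{X,L^2\}$: (a) $\sigma(\mathcal{M}_E)\subset\bigcup_{x\in\overline\Omega}\sigma(M(x))$, (b) $\bigcup_{x\in\overline\Omega}\sigma(M(x))\subset\sigma_e(\mathcal{M}_E)$, and (c) the trivial $\sigma_e(\mathcal{M}_E)\subset\sigma(\mathcal{M}_E)$. Put $K:=\bigcup_{x\in\overline\Omega}\sigma(M(x))$. Since $M$ is continuous on the compact set $\overline\Omega$ and the eigenvalues of a matrix depend continuously on its entries, $K$ is compact.

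\textbf{Step (a).} Fix $\lambda\notin K$. Then $\lambda I-M(x)$ is invertible for every $x\in\overline\Omega$. The map $x\mapsto(\lambda I-M(x))^{-1}$ is continuous on $\overline\Omega$, because the inverse equals the adjugate divided by $\det(\lambda I-M(x))$ and this determinant never vanishes. Hence this inverse is bounded in norm on $\overline\Omega$. Multiplication by it is therefore a bounded operator on both $X$ and $L^2$, and it inverts $\lambda I-\mathcal{M}_E$. So $\lambda\in\rho(\mathcal{M}_E)$.

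\textbf{Step (b).} This is the main step. Fix $x_0\in\overline\Omega$ and $\lambda\in\sigma(M(x_0))$. We show that $\lambda I-\mathcal{M}_E$ is not Fredholm of index zero, by building a Weyl-type singular sequence.

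Pick a unit vector $v\in\mathbb{C}^m$ with $(\lambda I-M(x_0))v=0$; we work in the complexified spaces, as is standard for spectra. Choose points $x_k\in\Omega$ with $x_k\to x_0$ and pairwise disjoint balls $B_k=B(x_k,r_k)\subset\Omega$ with $r_k\to0$. This is possible even when $x_0\in\partial\Omega$, since $\Omega$ is open and $x_0$ lies in its closure.

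For $L^2$, set $\phi_k=|B_k|^{-1/2}\chi_{B_k}v$. Then $\|\phi_k\|_{L^2}=1$, the $\phi_k$ are orthonormal, and
\[
\|(\lambda I-\mathcal{M})\phi_k\|_{L^2}\le \sup_{y\in B_k}\|M(y)-M(x_0)\|\to0
\]
by uniform continuity of $M$. For $X$, take continuous bumps $\psi_k\in C(\overline\Omega)$ with $0\le\psi_k\le1$, $\psi_k(x_k)=1$ and $\operatorname{supp}\psi_k\subset B_k$, and set $\phi_k=\psi_k v$. Then $\|\phi_k\|_X=1$, $\|\phi_k-\phi_j\|_X\ge1$ for $k\ne j$ (disjoint supports), and $\|(\lambda I-\mathcal{M})\phi_k\|_X\to0$.

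In either space the sequence $(\phi_k)$ is bounded, has no convergent subsequence, and satisfies $(\lambda I-\mathcal{M})\phi_k\to0$. Suppose $T:=\lambda I-\mathcal{M}_E$ were Fredholm. Then its kernel is finite-dimensional, so we can write $E=\mathcal{N}(T)\oplus Y$ with $Y$ closed. Since $T|_Y$ is injective with closed range, it is bounded below. Decompose $\phi_k=n_k+y_k$. Then $Ty_k\to0$ forces $y_k\to0$, so $n_k$ is bounded in the finite-dimensional space $\mathcal{N}(T)$. Passing to a subsequence, $n_k$ converges, and hence $\phi_k$ converges, a contradiction. Thus $T$ is not even semi-Fredholm, so $\lambda\in\sigma_e(\mathcal{M}_E)$. (Equivalently one can invoke the standard characterization of Weyl singular sequences for the essential spectrum.)

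\textbf{Step (c)} holds by definition. Combining (a), (b), (c) gives $K\subset\sigma_e\subset\sigma\subset K$, which proves both (i) and (ii).
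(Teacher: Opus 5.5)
Your proof is correct. It differs from the paper mainly in being self-contained. The paper proves (i) only by citing \cite[Proposition 2.7]{MR3906242}, and disposes of (ii) with ``by a same discussion.'' You instead give a direct argument that treats $X$ and $L^2$ in parallel.

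\begin{itemize}
\item \emph{Resolvent side.} Pointwise inversion of $\lambda I-M(x)$, with continuity of the inverse coming from the adjugate formula, gives $\sigma(\mathcal{M}_E)\subset\bigcup_{x\in\overline\Omega}\sigma(M(x))$.
\item \emph{Spectral side.} A Weyl singular sequence, built from an eigenvector of $M(x_0)$ and disjoint shrinking balls accumulating at $x_0$, shows that $\lambda I-\mathcal{M}_E$ cannot have both a finite-dimensional kernel and a closed range. So it is not Fredholm of any index.
\end{itemize}

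What your route buys is exactly what the paper's ``same discussion'' for $L^2$ leaves implicit.
\begin{itemize}
\item In $L^2$ there are no point evaluations, so you need normalized indicator functions rather than continuous bumps.
\item Points $x_0\in\partial\Omega$ must be reached through balls lying inside $\Omega$. This is where you use that every point of $\overline\Omega$ is a limit of interior points, so the union over $\overline\Omega$ really is the $L^2$-spectrum.
\item You also make the complexification explicit.
\end{itemize}
The citation route buys only brevity.

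One small overstatement: your argument shows that $\lambda I-\mathcal{M}_E$ is not \emph{upper} semi-Fredholm. It does not show that the operator fails to be semi-Fredholm in both senses; ruling out lower semi-Fredholmness would need a separate argument, for instance a singular sequence for the adjoint. This does not affect the result, since failing to be Fredholm already puts $\lambda$ in $\sigma_e$ under the paper's definition.
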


\begin{proof}
	The statement (i) is proved in \cite[Proposition 2.7]{MR3906242}, and  by a same discussion as in \cite[Proposition 2.7]{MR3906242}, we can prove statement (ii).
\end{proof}

Note that $D \mathcal{J}$ is a compact operator on $X$, and also on  $L^{2}(\overline{\Omega},\mathbb{R}^{m})$. By \cite[Theorem 7.26]{MR1861991}, we have $\sigma_{e}(L_{C})=\sigma_e(\mathcal{M}_{C})$ and $\sigma_{e}(L_{L^{2}})=  \sigma_e(\mathcal{M}_{L^{2}}) $. By Proposition \ref{pro2.1}, we obtain that $\sigma_{e}(L_{C})=\sigma_e(L_{L^{2}})$.

We can choose $c$ sufficiently large such that $cI+M(x)$ is a nonnegative  matrix, and then $cI +L$ is a positive linear operator. According to \cite[p.276]{MR1921782}, we have $s(cI +L) =r( cI +L)$. Clearly, $s(cI +L)= c + s(L) $,   and $r_{e}(cI +L)= c+ \max\limits_{x \in \overline{\Omega}}s(M(x))$. According to Krein-Rutman theorem (see \cite[Corollary 2.2]{MR643014}), $cI +L$  admits the principal eigenvalue if $r( cI +L)>r_{e}(cI +L) $. Hence, we have the following result.

\begin{proposition}\label{pro2.2}
	$L$  admits the principal eigenvalue if $s(L)> \max\limits_{x \in \overline{\Omega}}s(M(x)) $.
\end{proposition}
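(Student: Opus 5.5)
The plan is to shift $L$ to a positive operator and then apply the Krein--Rutman theorem in the form that needs only $r > r_e$ (\cite[Corollary 2.2]{MR643014}). First choose $c>0$ so large that $cI+M(x)$ is a nonnegative matrix for every $x\in\overline\Omega$. This is possible because $M$ is cooperative and its entries $m_{ii}$ are continuous on the compact set $\overline\Omega$. The operator $T:=cI+L=D\mathcal{J}+(cI+\mathcal{M})$ is then a positive bounded linear operator on $X$ (and on $L^2$), since $J_i\ge 0$, $d_i>0$ and $cI+M(x)\ge 0$. Spectral mapping for translations gives $\sigma(T)=c+\sigma(L)$ and $\sigma_e(T)=c+\sigma_e(L)$, hence $s(T)=c+s(L)$. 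For positive operators on these ordered Banach spaces, \cite[p.276]{MR1921782} gives $s(T)=r(T)$.

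Next I compute the essential spectral radius. Since $D\mathcal{J}$ is compact, \cite[Theorem 7.26]{MR1861991} gives $\sigma_e(T)=\sigma_e(cI+\mathcal{M})=c+\bigcup_{x\in\overline\Omega}\sigma(M(x))$ by Proposition \ref{pro2.1}. The matrix $cI+M(x)$ is nonnegative, so by Perron--Frobenius its spectral radius equals its spectral bound $c+s(M(x))$. Hence $r_e(T)=c+\max_{x\in\overline\Omega}s(M(x))$. The maximum is attained because $x\mapsto s(M(x))$ is continuous, as eigenvalues depend continuously on matrix entries.

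Under the hypothesis $s(L)>\max_x s(M(x))$ this yields $r(T)>r_e(T)$. The generalized Krein--Rutman theorem then says that $r(T)$ is an eigenvalue of $T$ with an eigenvector $\phi>0$ in the cone. It follows that $L\phi=(r(T)-c)\phi=s(L)\phi$, so $s(L)$ is a principal eigenvalue of $L$ in the sense defined above.

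The main obstacle is verifying the hypotheses of the cited Krein--Rutman variant. The cone $X_+$ (respectively $L^2_+$) must be closed, total and normal, which holds for $X_+$ and $L^2_+$. One must also justify that the essential spectral radius defined through $\sigma_e$ agrees with the notion used in \cite{MR643014}; this is exactly the equivalence of definitions from Nussbaum \cite{MR264434}. A smaller point is the claim $r_e(T)=c+\max_x s(M(x))$. It requires every eigenvalue $\lambda$ of $cI+M(x)$ to satisfy $|\lambda|\le c+s(M(x))$, and that Perron bound comes from the nonnegativity of $cI+M(x)$.
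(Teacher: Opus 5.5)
Your proposal is correct and follows the paper's argument step by step: shift to the positive operator $cI+L$, use $s(cI+L)=r(cI+L)$, identify $r_e(cI+L)=c+\max_{x\in\overline\Omega}s(M(x))$ from the compactness of $D\mathcal{J}$ and Proposition~\ref{pro2.1}, and apply Nussbaum's Krein--Rutman variant. You also spell out details the paper leaves implicit, namely Perron--Frobenius giving $r(cI+M(x))=c+s(M(x))$ and the equivalence of the essential-radius definitions.
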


\begin{lemma}\label{l2.1}
	$ s(L_{C})=s(L_{L^{2}})$.
\end{lemma}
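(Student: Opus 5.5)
Write $s_e:=\max_{x\in\overline\Omega}s(M(x))$. By Proposition \ref{pro2.1} and the remark that $D\mathcal{J}$ is compact, $\sigma_e(L_C)=\sigma_e(L_{L^2})$, and both real parts are bounded above by $s_e$. So it suffices to compare only the spectrum lying to the right of $s_e$, where both operators behave like compact perturbations. I would split into two cases according to whether $s(L_C)$ or $s(L_{L^2})$ exceeds $s_e$.

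\textbf{Case 1: the isolated part.} Suppose $s(L_C)>s_e$. By Proposition \ref{pro2.2}, applied to $L_C$, there is an eigenvalue $\lambda=s(L_C)$ with eigenfunction $\phi\in X\subset L^2$. Then $\lambda\in\sigma_p(L_{L^2})$, so $s(L_{L^2})\ge s(L_C)$.

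Conversely, suppose $s(L_{L^2})>s_e$. Since the essential spectrum lies in $\{\mathrm{Re}\,\lambda\le s_e\}$, the positive-operator argument (with $cI+L$ and Krein--Rutman on $L^2$) shows that $\lambda=s(L_{L^2})$ is an eigenvalue with some eigenfunction $\psi\in L^2$. Rewrite the eigen-equation as
\[
(\lambda I-M(x))\psi(x)=D[\mathcal{J}\psi](x).
\]
Here $\lambda>s_e\ge s(M(x))$ for every $x$, so $\lambda I-M(x)$ is invertible with inverse continuous in $x$. Also $\mathcal{J}\psi\in C(\overline\Omega)$ for $\psi\in L^2$, because the $J_i$ are continuous and bounded on compacts, hence uniformly continuous. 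Therefore $\psi=(\lambda I-M)^{-1}D\mathcal{J}\psi\in X$, which gives $s(L_C)\ge s(L_{L^2})$.

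The same regularity argument applies to any eigenvalue with $\mathrm{Re}\,\lambda>s_e$. So the spectra of $L_C$ and $L_{L^2}$ coincide in the half-plane $\{\mathrm{Re}\,\lambda>s_e\}$, since points of the spectrum outside $\sigma_e$ there are eigenvalues.

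\textbf{Case 2: both bounds at most $s_e$.} If neither $s(L_C)$ nor $s(L_{L^2})$ exceeds $s_e$, I must show both equal $s_e$. The main step is the lower bound $s(L)\ge s_e$ for each space. By Proposition \ref{pro2.1},
\[
\sigma_e(L)=\bigcup_{x\in\overline\Omega}\sigma(M(x)).
\]
For each $x$, $M(x)$ is cooperative, so by Perron--Frobenius $s(M(x))\in\sigma(M(x))$ is a real eigenvalue. Taking $x_0$ to be a maximizer of $s(M(x))$ gives $s_e\in\sigma_e(L)\subset\sigma(L)$ in both spaces. Hence $s(L_C)\ge s_e$ and $s(L_{L^2})\ge s_e$, so in this case both equal $s_e$.

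\textbf{Combining.} From Case 2, both spectral bounds are always at least $s_e$. If either is strictly larger than $s_e$, Case 1 shows the two are equal. Otherwise both equal $s_e$.

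\textbf{Main obstacle.} The delicate point is Case 1 on $L^2$: I must ensure that $s(L_{L^2})>s_e$ really is attained as an eigenvalue. For this I would use that outside $\sigma_e$ the spectrum consists of isolated eigenvalues of finite multiplicity, together with $s(cI+L)=r(cI+L)\in\sigma$ for positive operators on $L^2$. The continuity of $\mathcal{J}\psi$ for $\psi\in L^2$ is routine, via Cauchy--Schwarz and uniform continuity of the $J_i$ on bounded sets.
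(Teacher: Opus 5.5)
Your proposal is correct and follows essentially the paper's argument. Both split cases according to whether the spectral bound exceeds $\max_{x\in\overline\Omega}s(M(x))$, use Proposition~\ref{pro2.2} to produce an eigenfunction, and bootstrap $L^2$ eigenfunctions into $X$ via $\phi=(\lambda I-\mathcal{M})^{-1}D\mathcal{J}\phi$. You also make explicit the lower bound $s(L)\ge\max_{x\in\overline\Omega}s(M(x))$, which comes from that value lying in $\sigma_e(L)$; the paper uses this bound tacitly to make its two cases exhaustive.
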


\begin{proof}
	We prove the equality by considering two cases separately.

	\textbf{Case 1.} $s(L_{L^{2}}) =  \max\limits_{x \in \overline{\Omega}} s(M(x))$.
	
	We show that $s(L_{C}) =  \max\limits_{x \in \overline{\Omega}} s(M(x))$. Suppose, for contradiction, that $s(L_{C}) >  \max\limits_{x \in \overline{\Omega}} s(M(x))$.
	By Proposition \ref{pro2.2}, there exists $\phi \in X_{+}$ such that $L_{C} \phi = s(L_{C}) \phi$. This implies $s(L_{L^{2}}) \ge s(L_{C})$, which contradicts the assumption.

	\textbf{Case 2.} $s(L_{L^{2}}) >  \max\limits_{x \in \overline{\Omega}} s(M(x))$.
	
	By Proposition~\ref{pro2.2}, there exists $\phi \in L^{2}_{+}$ such that $L_{L^{2}} \phi = s(L_{L^{2}}) \phi$. Since $D\mathcal{J}(L^{2})\subset X$, we have $\phi = (s(L_{L^{2}}) - \mathcal{M})^{-1} D\mathcal{J}\phi \in X$. Hence, $s(L_{C}) \ge s(L_{L^{2}})> \max\limits_{x \in \overline{\Omega}} s(M(x))$. Following the same discussion as in Case 1  yields $s(L_{L^{2}}) \ge s(L_{C})$. Therefore, $s(L_{C}) = s(L_{L^{2}})$.
\end{proof}

By classical variational formula (see \cite{MR361998,MR1544417}), we have the following
Rayleigh–Ritz characterization for $	s(L)$.

\begin{theorem}\label{t2.2}
	Assume that $L$ is a self-adjoint operator. Then
	\[
	s(L)= \lambda_{v}:=\sup_{\substack{\phi \in  L^{2}\\ \left\| \phi \right\| _{L^{2}}=1}} \int_{\Omega}\left[  \phi^{T}(x)D \left[ \mathcal{J\phi}\right](x) +  \phi^{T}(x) M(x) \phi(x) \right] \, \mathrm{d}x.
	\]
\end{theorem}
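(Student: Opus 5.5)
The plan is to work on $L^{2}$, where self-adjointness gives a Hilbert-space spectral theory, and then use Lemma \ref{l2.1} to transfer the conclusion to $L_{C}$. Since $D\mathcal{J}$ is bounded on $L^{2}$ (because $J_i\in L^\infty$ and $\Omega$ is bounded) and $\mathcal{M}_{L^2}$ is bounded because the $m_{ij}$ are continuous on $\overline\Omega$, the operator $L_{L^2}$ is a bounded self-adjoint operator. The integrand in $\lambda_v$ is exactly $\langle L\phi,\phi\rangle_{L^2}$. So the claim reduces to the classical identity
\[
\sup\sigma(T)=\sup_{\|\phi\|_{L^2}=1}\langle T\phi,\phi\rangle
\]
for a bounded self-adjoint operator $T$ on a real Hilbert space (equivalently, on its complexification). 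Here $\sigma(T)\subset\mathbb{R}$, so $s(T)=\max\sigma(T)$.

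I would prove the two inequalities directly.

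\emph{Step 1: $\lambda_v\ge s(L)$.} Let $\lambda:=s(L_{L^2})$; it lies in $\sigma(L)$ because the spectrum is compact and real. Since $T-\lambda$ is self-adjoint, $\lambda$ belongs to the approximate point spectrum. The reason is that if $\|(T-\lambda)\phi\|\ge c\|\phi\|$ for all $\phi$, then $T-\lambda$ has closed range, and its range has orthogonal complement $\mathcal{N}(T-\lambda)=\{0\}$, so $T-\lambda$ would be invertible. Hence there are unit vectors $\phi_k$ with $(T-\lambda)\phi_k\to0$, which gives $\langle T\phi_k,\phi_k\rangle\to\lambda$. If one works in the complexification, take real and imaginary parts: the quadratic form of a real symmetric operator splits additively, so one of the two parts attains a ratio at least as large.

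\emph{Step 2: $\lambda_v\le s(L)$.} For $\mu>s(L)$, the operator $\mu-T$ is self-adjoint with spectrum in $[\mu-s(L),\infty)\subset(0,\infty)$. By the spectral theorem (or the continuous functional calculus applied to $\sqrt{\mu-T}$), $\mu-T$ is positive, so $\langle T\phi,\phi\rangle\le\mu\|\phi\|^2$. A more elementary route is to set $m:=\inf_{\|\phi\|=1}\langle(\mu-T)\phi,\phi\rangle$. If $m<0$, Step 1 applied to $-(\mu-T)$, together with the standard fact that $\inf$ of the numerical range lies in the closure of the spectrum, would place a point of $\sigma$ below $0$, a contradiction. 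Letting $\mu\downarrow s(L)$ gives $\lambda_v\le s(L)$.

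Finally, Lemma \ref{l2.1} gives $s(L_C)=s(L_{L^2})$, so the formula holds for either realization of $L$.

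The main obstacle is a bookkeeping point rather than a substantive one: the theorem's self-adjointness hypothesis is on $L^{2}$, while $s(L)$ may refer to $L_C$. This is handled by the lemma. Otherwise I expect to cite the classical variational formula (the cited references) rather than reprove the spectral theorem.
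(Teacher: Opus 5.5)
Your proposal is correct and matches the paper, which simply invokes the classical variational formula for bounded self-adjoint operators on $L^2$, with Lemma~\ref{l2.1} implicitly identifying $s(L_C)$ with $s(L_{L^2})$. One caveat: the ``more elementary route'' in your Step 2 is circular. It assumes that the infimum of the numerical range lies in the spectrum, which is exactly the nontrivial direction being proved, and Step 1 applied to $-(\mu-L)$ gives the opposite inequality. Rely instead on your functional-calculus argument, or use the Cauchy--Schwarz inequality for the positive semidefinite form $\langle(\lambda_v-L)\phi,\phi\rangle$ along a maximizing sequence to show that $\lambda_v$ is in the approximate point spectrum.
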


\begin{lemma}\label{l2.2}
	If there exists an open set $\Omega_{0}\subset \Omega$ such that $s(M(x))=  \max\limits_{x \in \overline{\Omega}}s(M(x) )$ for all $x\in \Omega_{0}$, then $s(L)> \max\limits_{x \in \overline{\Omega}}s(M(x) )$.
\end{lemma}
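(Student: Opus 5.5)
The plan is to argue by contradiction via a Birman--Schwinger type reduction to a finite-dimensional problem. Set $s^*:=\max_{x\in\overline\Omega}s(M(x))$. By Proposition \ref{pro2.1} and the compactness of $D\mathcal J$ we have $\sigma_e(L)=\bigcup_x\sigma(M(x))$, and $s^*$ is attained in $\sigma(M(x))$ by Perron--Frobenius, since $M(x)$ is cooperative. Hence $s(L)\ge s^*$, and it suffices to exclude $s(L)=s^*$. By Lemma \ref{l2.1} we may work in $X$ or in $L^2$, whichever is convenient.

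\emph{Step 1 (reduction to a rank-type perturbation).} Fix $x_0\in\Omega_0$. By (\textbf{J}), $J_i(0)>0$ and each $J_i$ is continuous, so there are $r,\delta>0$ with $B_{2r}(x_0)\subset\Omega_0$ and $J_i(x-y)\ge\delta$ for $x,y\in B_r(x_0)$ and all $i\in\mathcal S$. Choose a continuous cutoff $0\le\eta\le 1$ supported in $B_r(x_0)$ with $\eta\not\equiv0$. Setting $\kappa:=\delta\min_i d_i>0$ and
\[
[K\phi](x):=\kappa\,\eta(x)\int_\Omega \eta(y)\phi(y)\,\mathrm{d}y ,
\]
we get $D\mathcal J\ge K$ in the order of positive operators, so $L-(K+\mathcal M)$ is a positive operator. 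Monotonicity of the spectral bound for resolvent-positive operators under positive perturbations (equivalently, $r(cI+L)\ge r(cI+K+\mathcal M)$ for positive operators) then gives $s(L)\ge s(K+\mathcal M)$. It therefore suffices to show $s(K+\mathcal M)>s^*$.

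\emph{Step 2 (Birman--Schwinger).} For $\lambda>s^*$ the operator $\lambda-\mathcal M$ is invertible with positive inverse $R_\lambda(x)=(\lambda-M(x))^{-1}\ge0$. Moreover, $\lambda\in\sigma(K+\mathcal M)$ iff $1\in\sigma\bigl(K(\lambda-\mathcal M)^{-1}\bigr)$. Because $K$ factors through $\mathbb R^m$, this reduces to the $m\times m$ nonnegative matrix
\[
N_\lambda:=\kappa\int_\Omega \eta^2(x)\,(\lambda-M(x))^{-1}\,\mathrm{d}x .
\]
The matrix $N_\lambda$ is continuous and decreasing in $\lambda$ and tends to $0$ as $\lambda\to\infty$. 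If we show $r(N_\lambda)\to\infty$ as $\lambda\downarrow s^*$, then by continuity some $\lambda_1>s^*$ has $r(N_{\lambda_1})=1$, with a nonnegative Perron vector. This yields a nonnegative eigenfunction $\phi=(\lambda_1-\mathcal M)^{-1}\eta\,\xi$ of $K+\mathcal M$ with eigenvalue $\lambda_1$, hence $s(K+\mathcal M)\ge\lambda_1>s^*$.

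\emph{Step 3 (blow-up; main obstacle).} For $x\in\Omega_0$, let $\ell(x)\ge0$ be a left Perron vector of $M(x)$ with $\mathbf 1^T\ell(x)=1$. Then
\[
\mathbf 1^TR_\lambda(x)\mathbf 1\ \ge\ \ell(x)^TR_\lambda(x)\mathbf 1=\frac{1}{\lambda-s^*}.
\]
This needs no continuity of $\ell$, only measurability of the scalar integrand. Consequently $\mathbf 1^TN_\lambda\mathbf 1\ge \kappa\|\eta\|_{L^2}^2/(\lambda-s^*)\to\infty$. The delicate point is upgrading "some entry of $N_\lambda$ blows up" to "$r(N_\lambda)$ blows up", since nonnegative matrices can have large entries but small spectral radius.

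My first attempt would be to shrink $r$ so that $M(x)$ is uniformly close to $M(x_0)$ on $B_r(x_0)$, and to restrict attention to the irreducible block of $M(x_0)$ carrying $s^*$. On that block the resolvent is entrywise bounded below by $c\,v\ell^T/(\lambda-s^*)$ (with $v,\ell\gg0$ the block's Perron vectors), uniformly in $x$ near $x_0$. Here I would use that $s(M(x))=s^*$ for all $x$ near $x_0$, so the dominant pole persists and the Perron projection depends continuously on $x$ where the block stays irreducible. Then $N_\lambda\ge c'\,v\ell^T/(\lambda-s^*)$ on that block, and $r(N_\lambda)\ge c'\,\ell^Tv/(\lambda-s^*)\to\infty$.

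If the irreducible-block structure varies with $x$, the fallback is a direct sub-solution. I would take $\phi=\eta_1 w$ with $w$ a fixed Perron vector of the relevant block at $x_0$, and verify $(K+\mathcal M)\phi\ge(s^*+\varepsilon)\phi$ by choosing $\kappa\int\eta\eta_1$ large relative to the $O(r)$ defect of $M(x)w-s^*w$ on the support. This is exactly where the openness of $\Omega_0$, not just a single maximizing point, is used.
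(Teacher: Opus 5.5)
Your Steps 1 and 2 are sound and would give a genuinely different route from the paper. The comparison $D\mathcal{J}\ge K$ with the finite-rank kernel $\kappa\,\eta(x)\eta(y)$, the reduction to the $m\times m$ matrix $N_\lambda$, and the eigenfunction $(\lambda_1-\mathcal{M})^{-1}\eta\,\xi$ are all correct, as is your bound $\mathbf{1}^TN_\lambda\mathbf{1}\ge\kappa\|\eta\|_{L^2}^2/(\lambda-s^*)$. The gap is Step 3, which carries the whole content of the lemma, and neither of your two routes closes it.

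In the first attempt you conflate the pole of $M(x)$ with the pole of the block $C$ chosen at $x_0$. The hypothesis $s(M(x))=s^*$ on $\Omega_0$ only says that \emph{some} block carries $s^*$ at each $x$, and it may be a different block at different points. For instance, take $n\ge 2$ and $M(x)=\mathrm{diag}(-|x-x_0|,0)$. The block $\{1\}$ of $M(x_0)$ is irreducible and carries $s^*=0$, yet $\int\eta^2(\lambda+|x-x_0|)^{-1}\,\mathrm{d}x$ stays bounded as $\lambda\downarrow 0$, so the asserted uniform bound $c\,v\ell^T/(\lambda-s^*)$ fails for that block. You give no rule for choosing $x_0$ and $C$ with $s(M_{CC}(x))\equiv s^*$ and $M_{CC}(x)$ irreducible on a neighbourhood. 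This can be arranged, but only with an extra Baire-category argument over the finitely many index sets $C\subseteq\mathcal{S}$.

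The fallback fails outright. With a fixed vector $w$, the defect $(M(x)-M(x_0))w$ is of the order of the modulus of continuity of $M$ on $B_r(x_0)$. The gain $\kappa\int\eta\eta_1$ is at most $\kappa|B_r(x_0)|=O(r^n)$, and nothing forces the defect to be smaller. In fact this mechanism uses only continuity of $M$ at a single maximizing point, so it would also ``prove'' the conclusion when $s^*$ is attained only at $x_0$, where it is false. Take $m=1$, $M(x)=-|x-x_0|^{\alpha}$ with $0<\alpha<n$ and $d_1$ small: then $d_1\mathcal{J}(\lambda-\mathcal{M})^{-1}$ has norm less than $1$ for every $\lambda>0$, so $s(L)=0=s^*$. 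Despite your closing remark, the openness of $\Omega_0$ is never actually used.

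The missing ingredient is a way to exploit $s(M(x))=s^*$ at \emph{every} point of $\operatorname{supp}\eta$ without any continuity of eigenvectors. Within your framework the trace does this:
\begin{itemize}
\item For any real $m\times m$ matrix, $r(N)\ge \operatorname{tr}N/m$.
\item For $x\in\Omega_0$ and $\lambda>s^*$, $\operatorname{tr}(\lambda-M(x))^{-1}=\sum_j(\lambda-\mu_j(x))^{-1}\ge(\lambda-s^*)^{-1}$. Each term has positive real part since $\operatorname{Re}\mu_j(x)\le s^*<\lambda$, and the Perron eigenvalue contributes exactly $(\lambda-s^*)^{-1}$.
\item Hence $r(N_\lambda)\ge\kappa\|\eta\|_{L^2}^2/\bigl(m(\lambda-s^*)\bigr)\to\infty$, and your Step 2 even yields the explicit bound $s(L)\ge s^*+\kappa\|\eta\|_{L^2}^2/m$.
\end{itemize}

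The paper proceeds differently. It measurably selects a Perron eigenvector field $w(x)$ with $M(x)w(x)=s^*w(x)$ exactly on $\Omega_0$ and uses $\eta w$ as an $L^2$ sub-solution. The multiplication part then has zero defect, and the nonlocal term supplies a uniform positive gain on the components where $w$ has mass on the ball. That pointwise exactness is precisely what your fixed-vector fallback lacks. The trace route, once added, reaches the same conclusion without a measurable selection theorem.
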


\begin{proof}
	According to the Perron-Frobenius theorem, for each $x\in \overline{\Omega}$, $M(x)$ admits the principal eigenvalue. By the Kuratowski--Ryll--Nardzewski measurable selection theorem, we can choose $w\in L^{2}_{+}$ with $\max_{1\le i\le m} w_{i}(x)=1$ for all $x\in\overline\Omega$ such that
	$M(x)w(x)=s(M(x))w(x)$, $x\in\overline{\Omega}$.

	There exist $c_{0}>0$, $\delta_{0}>0$, $x_{0}\in\Omega_{0}$, and $\rho>0$ such that
	$J_{i}(x)>c_{0}$ for all $\|x\|_{\mathbb{R}^{n}}\le\delta_{0}$, $1\le i\le m$.
	and the ball
	$
	B(\rho):= \left\lbrace x \in \Omega_{0}: \left\| x-x_{0}\right\| \le\rho\right\rbrace  \subset \Omega_{0}
	$
	satisfying $\left\| x-y\right\| \le \delta_{0}$ for all $x, y \in B(\rho) $.
	Set
	$$
	\mathbb{I}: = \left\lbrace i : \int_{B(\frac{\rho}{2})} w_{i}(y)\, \mathrm{d}y>0 \right\rbrace ,
	$$
	and $\mathbb{J}:= \mathcal{S} \setminus\mathbb{I}$.
	We claim that $\mathbb{I} \neq \emptyset $.
	If not, then $\int_{B(\frac{\rho}{2})} w_{i}(y)\, \mathrm{d}y=0 $ for all $i$.
	It follows $w_{i}(x)= 0$ almost everywhere on $x\in B(\frac{\rho}{2})$ for all $i$.
	Then the set
	$
	\Sigma:= \left\lbrace x\in B(\frac{\rho}{2}): w_{i}(x)>0 \text{ for some } i\right\rbrace
	$
	is a null set.
	However, this is impossible because, by the choice of $w$, the set $\Sigma$ should actually be the entire ball $B(\frac{\rho}{2})$. Let
	$$
	\sigma:=  \min\limits_{i\in \mathbb{I}}\int_{B(\frac{\rho}{2})} w_{i}(y)\, \mathrm{d}y>0
	$$
	and a cut-off function  $\eta(x)=1$ for $x\in B(\frac{\rho}{2})$ and $\eta(x)=0$ for $x\in \Omega\setminus  B(\rho)$.
	For $x\in  B(\rho)$, we have
	\[
	d_{i}\int_{\Omega}J_{i}(x-y)\eta(y)w_{i}(y)\, \mathrm{d}y \ge d_{i}\int_{B(\frac{\rho}{2})}J_{i}(x-y)\eta(y)w_{i}(y)\, \mathrm{d}y\ge d_{i}c_{0}\sigma, \quad i\in \mathbb{I}.
	\]
	Then,
	$D\mathcal{J}\eta w\ge \underline d c_{0}\sigma\eta w$ in $L^{2}$ where $\underline d :=  \min\limits_{i\in \mathcal{S}}d_{i} $.
	Therefore, we have
	\[
	L_{L^{2}} \eta w \ge \left(  \underline d c_{0}\sigma+  \max\limits_{x \in \overline{\Omega}}s(M(x) )\right)   \eta w,
	\]
	which implies $s(L)>  \max\limits_{x \in \overline{\Omega}}s(M(x) )$.
\end{proof}

We remark that Lemma \ref{l2.2} was proved in \cite[Lemma 2.1(i)]{MR5005572} under the condition that $M(x)$ is irreducible at some point, where the core of the proof relies on the continuity of $w$. In contrast, we work in the $L^2$ setting, which does not require $w$ to be continuous; consequently, we do not need the irreducibility of $M(x)$ to guarantee such continuity(see \cite[Lemma 2.2]{MR4601060}). Our result thus extends the result by removing this condition.

\begin{theorem}\label{t2.1}
	Assume that {\rm(\textbf{M})} holds. Then
	$s(L)=\bar \lambda_{p} =\underline \lambda_{p}$.
\end{theorem}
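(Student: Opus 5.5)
We will prove $\underline\lambda_p \le s(L) \le \bar\lambda_p$ and $\bar\lambda_p \le s(L) \le \underline\lambda_p$. Here $s(L)=s(L_C)=s(L_{L^2})$ by Lemma \ref{l2.1}. The argument splits into the case where $L$ has a principal eigenvalue and a limiting case handled by perturbation.

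\emph{Step 1: $\underline\lambda_p\le s(L)\le\bar\lambda_p$ in general.} Take $\phi\in\mathrm{Int}(X_+)$ with $L\phi\le\lambda\phi$. Choose $c$ so that $cI+L$ is positive. Then $(c+\lambda)\phi-(cI+L)\phi\ge0$ with $\phi$ strongly positive, and the standard Collatz--Wielandt bound for positive operators gives $r(cI+L)\le c+\lambda$. The bound follows by iterating: $(cI+L)^k\phi\le(c+\lambda)^k\phi$ and $\|(cI+L)^k\|=\|(cI+L)^k\mathbf 1\|\le C(c+\lambda)^k$, since $\phi\ge\epsilon\mathbf 1$. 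Hence $s(L)\le\lambda$, and so $s(L)\le\bar\lambda_p$. For the other inequality, suppose $L\phi\ge\lambda\phi$ with $\phi\gg0$. Then $(cI+L)^k\phi\ge(c+\lambda)^k\phi$, so $r(cI+L)\ge c+\lambda$, giving $\underline\lambda_p\le s(L)$.

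\emph{Step 2: the case $s(L)>\max_x s(M(x))$.} Proposition \ref{pro2.2} gives a principal eigenfunction $\phi\in X_+$. By assumption (\textbf{M}) and \cite[Theorem 2.2]{MR4628895}, $\phi\gg0$. Testing the definitions with $\phi$ gives $\bar\lambda_p\le s(L)\le\underline\lambda_p$. Combined with Step 1, this yields equality.

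\emph{Step 3: the general case by perturbation; this is the main obstacle.} Assume $s(L)=\max_x s(M(x))=:s^*$. By Step 1 it remains to show that $\bar\lambda_p\le s^*$ and $\underline\lambda_p\ge s^*$.

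For $\bar\lambda_p$, we plan to perturb $M$ to $M_\epsilon:=M+h_\epsilon I$. Here $h_\epsilon\ge0$ is continuous with $\|h_\epsilon\|_\infty\le\epsilon$, chosen so that $s(M_\epsilon(x))$ attains its maximum on an open set; concretely, $h_\epsilon=\max\{0,\,s^*+\epsilon'-s(M(x))\}$ truncated near the maximizers. The perturbation $M_\epsilon$ is still cooperative and satisfies (\textbf{M}), since off-diagonal entries are unchanged. By Lemma \ref{l2.2}, the corresponding $L_\epsilon$ satisfies $s(L_\epsilon)>\max_x s(M_\epsilon(x))$. So by Step 2 it has a strongly positive eigenfunction $\phi_\epsilon$ with $L\phi_\epsilon\le L_\epsilon\phi_\epsilon=s(L_\epsilon)\phi_\epsilon$. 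Hence $\bar\lambda_p\le s(L_\epsilon)\le s(L)+\epsilon$, using $\|L_\epsilon-L\|\le\epsilon$ and monotonicity of the spectral bound for positive perturbations. Letting $\epsilon\to0$ gives the result.

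For $\underline\lambda_p$, we use the symmetric perturbation $M-h_\epsilon I$ with $h_\epsilon\ge0$ supported away from an open set where $s(M(x))$ is flattened to its maximum value. Concretely, we replace $s(M(x))$ near a maximizer by a plateau via $M_\epsilon=M+(\text{bump})I$ of size at most $\epsilon$. We then test with the strongly positive eigenfunction $\phi_\epsilon$ of $L_{-\epsilon}:=L+M_\epsilon-M-\epsilon I\le L$, obtaining $L\phi_\epsilon\ge L_{-\epsilon}\phi_\epsilon=s(L_{-\epsilon})\phi_\epsilon\ge(s(L)-2\epsilon)\phi_\epsilon$. The delicate points are the construction of a plateau perturbation that keeps $M_\epsilon$ continuous, cooperative and weakly irreducible while changing $s$ by $O(\epsilon)$, and the verification that $L_{-\epsilon}$ still exceeds its essential bound via Lemma \ref{l2.2}. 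The key extension here is that Lemma \ref{l2.2} requires no pointwise irreducibility.
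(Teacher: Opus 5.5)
Your argument is correct. Steps~2 and~3 follow the paper: in the principal-eigenvalue case you test with the strongly positive eigenfunction to get $\bar\lambda_p\le s(L)\le\underline\lambda_p$, and you reduce the general case to it by a plateau perturbation combined with Lemma~\ref{l2.2}.

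The genuine difference is Step~1. The paper invokes the Touching Lemma of \cite{MR4601060} to obtain $\bar\lambda_p\ge\underline\lambda_p$. You instead use the elementary power-iteration bound for the positive operator $cI+L$ (Gelfand's formula together with $s(cI+L)=r(cI+L)$). This gives the stronger sandwich $\underline\lambda_p\le s(L)\le\bar\lambda_p$ for every continuous cooperative $M$, with no irreducibility and no use of $d_i>0$. That is consistent with the paper's Example, where (\textbf{M}) fails and $\underline\lambda_p<s(L)=\bar\lambda_p$. Because $s(L)$ is already trapped, your perturbation only needs to supply two one-sided bounds, which you read off directly from the definitions. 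The paper, by contrast, needs the three monotonicity estimates \eqref{eq2.1} together with $s(L^\varepsilon)=\bar\lambda_p^\varepsilon=\underline\lambda_p^\varepsilon$. The paper's route is shorter once the Touching Lemma is taken as a black box; yours is self-contained.

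Two small repairs are needed.
\begin{itemize}
\item As written, $h_\epsilon=\max\{0,s^*+\epsilon'-s(M(x))\}$ is everywhere at least $\epsilon'$ and need not satisfy $\|h_\epsilon\|_\infty\le\epsilon$. Take $h_\epsilon=\min\{\epsilon,\,s^*-s(M(\cdot))\}$ instead. It is continuous and gives $s(M(x)+h_\epsilon(x)I)=\min\{s(M(x))+\epsilon,\,s^*\}$, which equals its maximum $s^*$ on the nonempty open set $\{x\in\Omega: s(M(x))>s^*-\epsilon\}$.
\item With this choice, the ``delicate points'' in your treatment of $\underline\lambda_p$ disappear. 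The operator $L_{-\epsilon}=L_\epsilon-\epsilon I\le L$ has the same eigenfunction $\phi_\epsilon$, so a single perturbation yields both $\bar\lambda_p\le s(L)+\epsilon$ and $\underline\lambda_p\ge s(L)-\epsilon$. In fact $M+(h_\epsilon-2\epsilon)I$ is exactly the paper's $\underline M^{\varepsilon}$.
\end{itemize}
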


\begin{proof}
	By the Touching Lemma(see \cite[Lemma 1.8]{MR4601060}), we can show that $\bar \lambda_{p} \ge \underline \lambda_{p}$. If $L_{C}$ admits the  principal eigenvalue, following the definition, we have $\bar \lambda_{p} \le s(L) \le \underline \lambda_{p}$. Hence $s(L)=\bar \lambda_{p} =\underline \lambda_{p}$.

	Following the approximation method introduce in \cite[Lemma 4.2]{MR4929554} or \cite[Theorem A]{MR5005572}, for any $\varepsilon>0$, we can construct a operator $L^{\varepsilon}$ satisfying $L^{\varepsilon}$ admits the  principal eigenvalue. Indeed, set $\lambda =  \max\limits_{x \in \overline{\Omega}} s(M(x))$ and define
	$$
	\Omega_{\varepsilon} = \{x \in \bar{\Omega} : s(M(x)) \geq \lambda - \varepsilon\}.
	$$
	Then $\Omega_{\varepsilon}$ is a closed subset of $\bar{\Omega}$, and $s(M(x)) = \lambda - \varepsilon$ on $\partial\Omega_{\varepsilon} \cap \overline\Omega$. Next, define
	$$
	m_{ij}^{\varepsilon}(x) = m_{ij}(x), \ x \in \bar{\Omega}  \text{ for } i \neq j,
	$$
	and
	\[
	m_{ii}^{\varepsilon}(x) =
	\begin{cases}
		m_{ii}(x) - 2\varepsilon + \lambda - s(M(x)), & x \in \Omega_{\varepsilon}, \\
		m_{ii}(x) - \varepsilon, & x \in \bar{\Omega} \setminus \Omega_{\varepsilon}.
	\end{cases}
	\]
	Consequently, $m_{ii}^{\varepsilon}(x) \le m_{ii}(x) - \varepsilon$ for all $x \in \bar{\Omega}$. Set $\underline{M}^{\varepsilon}(x) = \bigl(m_{ij}^{\varepsilon}(x)\bigr)_{n \times n}$, i.e.,
	\[
	\underline{M}^{\varepsilon}(x) =
	\begin{cases}
		M(x) + [\lambda - 2\varepsilon - s(M(x))]I, & x \in \Omega_{\varepsilon}, \\
		M(x) - \varepsilon I, & x \in \bar{\Omega} \setminus \Omega_{\varepsilon}.
	\end{cases}
	\]
	Then $\underline{M}^{\varepsilon}(x)$ is continuous and cooperative, and $\underline{M}^{\varepsilon}$ is weakly irreducible. Moreover,
	\[
	\begin{cases}
		s\bigl(\underline{M}^{\varepsilon}(x)\bigr) = s(M(x)) + \lambda - 2\varepsilon - s(M(x)) = \lambda - 2\varepsilon, & x \in \Omega_{\varepsilon}, \\
		s\bigl(\underline{M}^{\varepsilon}(x)\bigr) = s(M(x)) - \varepsilon < \lambda - 2\varepsilon, & x \in \bar{\Omega} \setminus \Omega_{\varepsilon}.
	\end{cases}
	\]
	Finally, let $L^{\varepsilon}$ denote the operator defined analogously to $L$ but with $M$ replaced by $\underline{M}^{\varepsilon}$. Then by Lemma \ref{l2.2}, $L^{\varepsilon}$ admits a principal eigenvalue.
	Notice that $\underline{M}^{\varepsilon}(x) \le M(x) \le \underline{M}^{\varepsilon}(x) +2\varepsilon$.
	By the monotonicity on $M$, we have
	\begin{equation}\label{eq2.1}
		\bar \lambda_{p}^{\varepsilon} \le \bar \lambda_{p} \le \bar \lambda_{p}^{\varepsilon}+ 2\varepsilon, \quad
		\underline \lambda_{p}^{\varepsilon}\le \underline \lambda_{p}\le \underline \lambda_{p}^{\varepsilon}+2\varepsilon, \quad
		s(L^{\varepsilon}) \le s(L)\le s(L^{\varepsilon})+2\varepsilon,
	\end{equation}
	and $s(L^{\varepsilon}) $ is strictly decreasing in $\varepsilon$, where $\bar{\lambda}_{p}^{\varepsilon}$ and $\underline{\lambda}_{p}^{\varepsilon}$ denote the analogues of $\bar{\lambda}_{p}$ and $\underline{\lambda}_{p}$ where $L$ is replaced by $L^{\varepsilon}$, respectively.
	Letting $\varepsilon\to 0$ in \eqref{eq2.1}, since $s(L^{\varepsilon})=\bar \lambda_{p}^{\varepsilon} =\underline \lambda_{p}^{\varepsilon} $, we obtain that $s(L)=\bar \lambda_{p} =\underline \lambda_{p} $.
\end{proof}

\begin{remark}
	The equality $\bar{\lambda}_p = \underline{\lambda}_p$ was previously established for cases where $M$ is strongly irreducible or $M(x)$ is irreducible at some point \cite{MR4929554,MR4601060,MR5005572}; moreover, in the self-adjoint case with $M$ strongly irreducible, it is known that $\bar{\lambda}_p = \underline{\lambda}_p = \lambda_v$ \cite{MR4929554}. In contrast, our result holds under significantly weaker conditions on $M$. Furthermore, we also establish that these generalized eigenvalues coincide with the spectral bound of the operator.
\end{remark}

\section{The weighted eigenvalue}

In this section, we establish variational characterizations of $\mu_0$ as the unique value satisfying $s(\mathcal{L}_{\mu_0})=0$. Note that, as before, $\mathcal{L}_{\mu}$ can be regarded as an operator on $X$ and on $L^{2}$. By Lemma \ref{l2.1}, these operators have the same spectral bound, and hence we do not distinguish between them.

To this end, we first derive necessary and sufficient conditions for the existence of a unique $\mu_0$ such that $s(\mathcal{L}_{\mu_0})=0$. Then, building on the Collatz-Wielandt and Rayleigh–Ritz characterizations of the spectral bound developed in Section 2, we establish corresponding characterizations for this weighted eigenvalue problem.
We make some  assumption on $J$, $A$ and $F$:
\begin{itemize}
	\item [(\textbf{S})] $J(x)=J(-x)$,  $A(x)$ and $F(x)$ are  symmetric matrices.
	
	\item[(\textbf{R})] $A+F$ is weakly irreducible.
\end{itemize}
Note that $A+F$ is weakly irreducible implies $A+\frac{1}{\mu}F$ is weakly irreducible for all $\mu>0$.
In this section, we still always assume that (\textbf{J}) and (\textbf{D}) hold.

\begin{proposition}\label{pr3.1}
	Assume that {\rm (\textbf{R})}	holds. Then $s(\mathcal{L}_{\mu})$ is continuous with respect to $\mu>0$.
\end{proposition}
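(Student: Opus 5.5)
The plan is to deduce continuity from the Collatz--Wielandt characterization in Theorem \ref{t2.1}, combined with the monotone dependence of $s$ on the zeroth-order matrix. Fix $\mu>0$ and set $M_\mu(x):=A(x)+\frac1\mu F(x)$. This matrix is continuous and cooperative, since $F\ge 0$ and $A$ is cooperative. By (\textbf{R}) it is weakly irreducible for every $\mu>0$, because its off-diagonal zero pattern is that of $A+F$. Hence (\textbf{M}) holds for $M_\mu$, and Theorem \ref{t2.1} gives
\[
s(\mathcal{L}_\mu)=\bar\lambda_p(\mu)=\inf_{\phi\in \mathrm{Int}(X_+)}\sup_{x\in\Omega,\,i\in\mathcal S}\frac{d_i\int_\Omega J_i(x-y)\phi_i(y)\,\mathrm{d}y+\sum_j\big(a_{ij}(x)+\frac1\mu f_{ij}(x)\big)\phi_j(x)}{\phi_i(x)} .
\]

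The key step is a Lipschitz-type comparison between $\mu$ and $\nu$. Take $\mu,\nu$ in a compact interval $[a,b]\subset(0,\infty)$. Fix any $\phi\in\mathrm{Int}(X_+)$ and any $i,x$. Because $F\ge 0$ and $\phi\gg 0$, the quotient attached to $\mu$ differs from the one attached to $\nu$ only through the term
\[
\Big(\tfrac1\mu-\tfrac1\nu\Big)\frac{\sum_j f_{ij}(x)\phi_j(x)}{\phi_i(x)}.
\]
This term is not bounded uniformly in $\phi$, since the ratio $\phi_j/\phi_i$ can blow up. So I will not compare the quotients directly. Instead I will use the operator inequality that follows from $F\ge 0$ being bounded: by the Perron--Frobenius property of cooperative matrices,
\[
M_\mu(x)\le M_\nu(x)+\Big|\tfrac1\mu-\tfrac1\nu\Big|\,\|F\|_\infty\, \mathbf 1\mathbf 1^T \quad\text{entrywise}.
\]
That bound is too crude, because adding an off-diagonal matrix is not a scalar shift. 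The better route is resolvent-free monotonicity in $\mu$: $\mu\mapsto s(\mathcal L_\mu)$ is nonincreasing, by monotonicity of $\bar\lambda_p$ in $M$, exactly as used in \eqref{eq2.1}. So it suffices to prove left and right continuity.

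For the quantitative bound, write $\frac1\nu F=\frac{\mu}{\nu}\cdot\frac1\mu F$ and treat the map $t\mapsto s(D\mathcal J+A+tF)$ for $t>0$. I will show this map is convex, hence continuous on the open interval $(0,\infty)$; continuity in $\mu$ then follows by composing with $t=1/\mu$. Convexity in $t$ is the standard Kingman/Cohen-type property of the spectral bound of operators whose off-diagonal part is fixed and positive, with the multiplicative part varying affinely. I will prove it through the Collatz--Wielandt formula for $\bar\lambda_p$. Given $\phi,\psi\in\mathrm{Int}(X_+)$ that are near-optimal for $t_1,t_2$, use the geometric mean $\phi^{\theta}\psi^{1-\theta}$ (componentwise) as the test function at $t=\theta t_1+(1-\theta)t_2$. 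Hölder's inequality then handles the integral terms $\int J_i\phi_i^\theta\psi_i^{1-\theta}\le(\int J_i\phi_i)^\theta(\int J_i\psi_i)^{1-\theta}$, and likewise the off-diagonal sums $\sum_{j\ne i}(a_{ij}+tf_{ij})\phi_j^\theta\psi_j^{1-\theta}$. The diagonal terms are affine in $t$. By the weighted AM--GM inequality, the supremum of the quotient is at most $\theta\,\bar\lambda_p(t_1)+(1-\theta)\bar\lambda_p(t_2)+\varepsilon$.

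The main obstacle is the off-diagonal coefficient $a_{ij}+tf_{ij}$ under Hölder: the coefficient itself varies with $t$, so I need $(\theta c_1+(1-\theta)c_2)$ with $c_k=a_{ij}+t_kf_{ij}\ge 0$. I will handle this by the inequality $c_1^{\theta}c_2^{1-\theta}\le \theta c_1+(1-\theta)c_2$, which points in the wrong direction. I therefore expect to instead use log-convexity in the form of Cohen's theorem: convexity in the diagonal only. To do so I will write $tF=t\,\mathrm{diag}(F)+t\,F_{\mathrm{off}}$. The off-diagonal part must then be treated via $\mu$-monotonicity plus a separate continuity argument. If this fails, the fallback is: monotonicity gives one-sided limits, and Proposition \ref{pro2.2} together with standard perturbation theory of isolated eigenvalues (when $s>\max_x s(M_\mu(x))$) handles continuity there. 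Where $s(\mathcal L_\mu)=\max_x s(M_\mu(x))$, continuity follows from the continuity of $\mu\mapsto\max_x s(M_\mu(x))$ and upper semicontinuity of the spectrum.
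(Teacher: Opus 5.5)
Your primary route does not work. Convexity of $t\mapsto s(D\mathcal{J}+A+tF)$ is false once $F$ has off-diagonal entries. In that case the off-diagonal part is not fixed, so the Kingman/Cohen setting you invoke does not apply. For a concrete example, take $m=2$, $J_1=J_2$, $d_1=d_2$, constant $A$ with $a_{11}=a_{22}=-1$, $a_{21}=1$, $a_{12}=0$, and $F$ whose only nonzero entry is $f_{12}=1$, so (\textbf{R}) holds. Diagonalizing the constant matrix $A+tF$ gives $s(D\mathcal{J}+A+tF)=c_0-1+\sqrt{t}$ with $c_0$ independent of $t$, which is strictly concave. So the wrong-way AM--GM inequality you ran into is a real obstruction. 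Splitting off $\mathrm{diag}(F)$ to use Cohen's theorem just leaves the same problem for $F_{\mathrm{off}}$.

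You also discarded the direct comparison for a reason that does not matter: you never need a bound uniform in $\phi$. That comparison is exactly the paper's proof. By Theorem \ref{t2.1}, fix one $\phi\in\mathrm{Int}(X_+)$ with $\mathcal{L}_\mu\phi\le(s(\mathcal{L}_\mu)+\varepsilon/2)\phi$. Since $\phi$ is continuous and strongly positive on $\overline\Omega$, $F\phi\le C_\phi\phi$ with a finite $C_\phi$ depending on $\max\phi_i/\min\phi_i$ and $\max f_{ij}$. The same $\phi$ then gives $s(\mathcal{L}_{\mu'})\le s(\mathcal{L}_\mu)+\varepsilon$ for $\mu'$ near $\mu$. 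The sup--inf formula with some $\psi$ gives the matching lower bound, and monotonicity supplies the remaining one-sided inequalities. Put differently, an infimum of functions continuous in $\mu$ is upper semicontinuous, and a supremum is lower semicontinuous.

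Your fallback, by contrast, is a correct and genuinely different proof, provided you make one inequality explicit.

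Upper semicontinuity of $\mu\mapsto s(\mathcal{L}_\mu)$ follows from $\|\mathcal{L}_\mu-\mathcal{L}_\nu\|\le|1/\mu-1/\nu|\,\|\mathcal{F}\|$ and upper semicontinuity of the spectrum.

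For lower semicontinuity there are two cases. If $s(\mathcal{L}_\mu)>\max_x s(M_\mu(x))$, then $c+s(\mathcal{L}_\mu)=r(cI+\mathcal{L}_\mu)>r_e(cI+\mathcal{L}_\mu)$. Hence it is an isolated eigenvalue of finite algebraic multiplicity, and norm-continuity of its Riesz projection forces $\sigma(\mathcal{L}_\nu)$ to meet a small disc around it. If $s(\mathcal{L}_\mu)=\max_x s(M_\mu(x))$, upper semicontinuity of the spectrum only gives the wrong direction. What you need is $s(\mathcal{L}_\nu)\ge\max_x s(M_\nu(x))$ for every $\nu$. This holds because $\sigma_e(\mathcal{L}_\nu)=\bigcup_x\sigma(M_\nu(x))$, as used in the proof of Proposition \ref{pr3.4}. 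Continuity of the right-hand side in $\nu$ then yields $\liminf_{\nu\to\mu}s(\mathcal{L}_\nu)\ge s(\mathcal{L}_\mu)$. The remark about one-sided limits from monotonicity is not needed here.

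Compared with the paper, this route never uses the Collatz--Wielandt formula. It therefore proves continuity without (\textbf{R}), and in the partially degenerate case without (\textbf{RR}). The paper's argument is more elementary and gives an explicit modulus $\delta(\mu,\varepsilon)$, but it is tied to Theorem \ref{t2.1} and hence to irreducibility.
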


\begin{proof}
	According to Theorem \ref{t2.1}, for any $\varepsilon>0$, there exists $\phi \in \mathrm{Int}(X_{+}) $ such that
	$
	s(\mathcal{L}_{\mu})\phi(x)+\frac{\varepsilon}{2}\phi(x)\ge \left[ \mathcal{L}_{\mu} \phi\right] (x)$ for $ x\in \overline{\Omega}.
	$
	It follows that
	\[
	\begin{aligned}
		\left[ \mathcal{L}_{\mu^{'}} \phi\right] (x)\le & s(\mathcal{L}_{\mu})\phi(x)+\dfrac{\varepsilon}{2}\phi(x) + \left( \frac{1}{\mu'} -\frac{1}{\mu}\right) F(x)\phi(x)\\
		\le &
		s(\mathcal{L}_{\mu})\phi(x)+\dfrac{\varepsilon}{2}\phi(x) + \dfrac{\mu-\mu^{'}}{\mu \mu^{'}}C\phi(x)\\
		\le & s(\mathcal{L}_{\mu})\phi(x)+\varepsilon\phi(x),
	\end{aligned}
	\]
	provided $ 0\le \mu -\mu^{'}\le \frac{\mu^{2}\varepsilon}{2C+\mu \varepsilon}$, where $C=\frac{m  \max\limits_{x \in \overline{\Omega}}f_{ij}(x)  \max\limits_{x \in \overline{\Omega}, i\in \mathcal{S}}\phi_i(x)}{ \min\limits_{x\in\overline{\Omega}, i\in \mathcal{S}}\phi_i(x)}>0$.
	Then, $s(\mathcal{L}_{\mu^{'}})\le s(\mathcal{L}_{\mu}) + \varepsilon$ provided $ 0\le \mu -\mu^{'}\le \frac{\mu^{2}\varepsilon}{2C+\mu \varepsilon}$.
	Similarly, there is $\psi\in\mathrm{Int}(X_{+}) $ such that
	$
	s(\mathcal{L}_{\mu})\psi(x)-\dfrac{\varepsilon}{2}\psi(x)\le \left[ \mathcal{L}_{\mu} \psi\right] (x)$  for $ x\in \overline{\Omega}.
	$
	Then
	\[
	\begin{aligned}
		\left[ \mathcal{L}_{\mu^{'}} \psi\right] (x)\ge & s(\mathcal{L}_{\mu})\psi(x)-\dfrac{\varepsilon}{2}\psi(x) + \left( \frac{1}{\mu'} -\frac{1}{\mu}\right) F(x)\psi(x)\\
		\ge &
		s(\mathcal{L}_{\mu})\psi(x)-\dfrac{\varepsilon}{2}\psi(x) + \dfrac{\mu-\mu^{'}}{\mu \mu^{'}}C^{'}\psi(x)\\
		\ge & s(\mathcal{L}_{\mu})\psi(x)-\varepsilon\psi(x),
	\end{aligned}
	\]
	provided $ 0\ge \mu -\mu^{'}\ge -\frac{\mu^{2}\varepsilon}{2C^{'}}$, where
	$C^{'}=\frac{m  \max\limits_{x \in \overline{\Omega}}f_{ij}(x)  \max\limits_{x \in \overline{\Omega}, i\in \mathcal{S}}\psi_i(x)}{ \min\limits_{x\in\overline{\Omega}, i\in \mathcal{S}}\psi_i(x)}>0$.
	It follows that
	$s(\mathcal{L}_{\mu^{'}})\ge s(\mathcal{L}_{\mu}) -\varepsilon$ provided $ 0\ge \mu -\mu^{'}\ge- \frac{\mu^{2}\varepsilon}{2C^{'}}$.
	It is easy to verify that $s(\mathcal{L}_{\mu})$ is nonincreasing in $\mu >0$.
	Therefore, there exists $\delta:=\min\left\lbrace \frac{\mu^{2}\varepsilon}{2C^{'}},  \frac{\mu^{2}\varepsilon}{2C+\mu \varepsilon}  \right\rbrace $ such that $\left| s(\mathcal{L}_{\mu^{'}})-s(\mathcal{L}_{\mu}) \right| \le \varepsilon$ if $\left| \mu - \mu^{'}\right| \le \delta$.
\end{proof}

\begin{proposition}\label{pr3.4}
	$\lim\limits_{\mu\to 0^{+}} s(\mathcal{L}_{\mu})= +\infty$.
\end{proposition}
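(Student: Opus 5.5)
We prove Proposition \ref{pr3.4} by bounding $s(\mathcal{L}_{\mu})$ from below through a suitable test function and letting $\mu\to0^{+}$. There is one subtlety: $F$ is only assumed non-zero and nonnegative, so $s(F(x))$ may vanish on most of $\overline\Omega$. We therefore need a lower bound that only sees the region where some entry of $F$ is positive.

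\textbf{Step 1: localize $F$.} Since $F\not\equiv 0$ and each $f_{ij}\in C(\overline\Omega)$, there exist indices $i_0,j_0$, a point $x_0\in\Omega$, a radius $r>0$ and a constant $f_0>0$ such that $f_{i_0j_0}(x)\ge f_0$ on the ball $B:=B(x_0,r)\subset\Omega$.

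\textbf{Step 2: a positive sub-solution supported on $B$.} Take the test function $\phi:=\chi_B\,(e_{i_0}+e_{j_0})\in L^{2}_{+}$; when $i_0=j_0$ this is $\chi_B e_{i_0}$. Four facts about $\mathcal{L}_\mu\phi$ hold pointwise:
\begin{itemize}
\item[(a)] The term $D\mathcal{J}\phi$ is nonnegative.
\item[(b)] The off-diagonal entries of $A$ are nonnegative. With $a_*:=\min_{x\in\overline\Omega,\,i\in\mathcal{S}}a_{ii}(x)$, this gives $(A\phi)_i\ge a_*\phi_i$.
\item[(c)] Since $F\ge 0$, we have $(F\phi)_{i_0}\ge f_{i_0j_0}\chi_B\ge f_0\chi_B$ on $B$, and, when $i_0\neq j_0$, the symmetric-type estimate $(F\phi)_{j_0}\ge f_{j_0j_0}\chi_B\ge 0$ holds.
\item[(d)] The $j_0$-component has no good lower bound from $F$ unless $f_{j_0i_0}>0$ or $f_{j_0j_0}>0$.
\end{itemize}
Because of (d), the simplest choice is to work directly with the quadratic form rather than a pointwise inequality. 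The form $\langle \mathcal{L}_\mu\phi,\phi\rangle_{L^2}$ contains the term $\frac1\mu\int_B (f_{i_0j_0}+f_{j_0i_0})\ge \frac{f_0|B|}{\mu}$, and all remaining terms are bounded below by a constant $C$ independent of $\mu$.

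\textbf{Step 3: from the form to the spectral bound.} For the self-adjoint case this is exactly Theorem \ref{t2.2}, which gives $s(\mathcal{L}_\mu)\ge \langle\mathcal{L}_\mu\phi,\phi\rangle/\|\phi\|^2\ge C+\frac{c}{\mu}$. In general, $s$ dominates the numerical-range bound only for self-adjoint operators. I would instead proceed pointwise via positivity: if $\psi\in L^2_+\setminus\{0\}$ satisfies $\mathcal{L}_\mu\psi\ge\lambda\psi$, then $s(\mathcal{L}_\mu)\ge\lambda$. This follows from $s=r$ for the positive operator $cI+\mathcal{L}_\mu$, since $(cI+\mathcal{L}_\mu)^k\psi\ge(c+\lambda)^k\psi$ forces $r\ge c+\lambda$; it is the same argument used at the end of Lemma \ref{l2.2}.

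To obtain such a $\psi$ despite (d), take $\psi=\chi_B(e_{i_0}+t e_{j_0})$ and use the Perron–Frobenius structure of the $2\times2$ nonnegative block of $F$ on $B$ rather than a single entry. Alternatively, and more simply, use the monotonicity of $s$ in the zero-order matrix. Replacing $A+\frac1\mu F$ by the smaller cooperative matrix $a_*I+\frac1\mu f_0\chi_B E_{i_0j_0}$ (minus continuous cutoffs, via a continuous bump $\le\chi_B$) decreases $s$. Here $E_{i_0j_0}$ denotes the matrix unit with a single $1$ in position $(i_0,j_0)$.

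\textbf{Main obstacle.} When $i_0\ne j_0$ and $f_{i_0i_0}=f_{j_0j_0}=f_{j_0i_0}=0$ on $B$, the matrix $F$ on $B$ is nilpotent, and $s(F(x))=0$ there. The growth then cannot come from $\frac1\mu F$ alone; it must come from coupling with $D\mathcal{J}$ and $A$. I would first try $\psi=\chi_B(e_{i_0}+\mu^{1/2}e_{j_0})$. The $i_0$-row then gains $\mu^{-1/2}f_0$. For the $j_0$-row, the diffusion gives $d_{j_0}\mathcal{J}(\mu^{1/2}\chi_B)\ge d_{j_0}c_0|B'|\mu^{1/2}$ on a small sub-ball $B'$, which is of order $\mu^{1/2}$ and hence does not blow up relative to $\psi_{j_0}$. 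Irreducibility (\textbf{R}) supplies a chain $j_0\to\cdots\to i_0$ through $A$ or $F$ entries. Balancing the powers of $\mu$ along this chain should yield $\mathcal{L}_\mu\psi\ge (C+c\mu^{-\alpha})\psi$ for some $\alpha>0$, hence $s(\mathcal{L}_\mu)\to+\infty$. In the symmetric setting (\textbf{S}), where $f_{j_0i_0}=f_{i_0j_0}$, this difficulty disappears: $\psi=\chi_B(e_{i_0}+e_{j_0})$ works directly with $\lambda=a_*+f_0/\mu$.
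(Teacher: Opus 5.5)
Your proposal proves the proposition only under (\textbf{S}). The Perron-vector variant you mention, applied to a constant minorant of $F$ near a point $x_0$ where $F(x_0)$ is not nilpotent, would extend it to that case, but you did not carry it out. Within that range your sub-solution/Rayleigh--Ritz argument is a valid alternative to the paper's route. The paper instead gets $s(\mathcal{L}_\mu)\ge\max_x s(A(x)+\mu^{-1}F(x))\ge\mu^{-1}\max_x r(F(x))-c$ from $\sigma_e(\mathcal{L}_\mu)=\bigcup_x\sigma(A(x)+\mu^{-1}F(x))\subset\sigma(\mathcal{L}_\mu)$.

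Outside that range nothing is proved:
\begin{itemize}
\item Your ``simpler'' monotonicity alternative is a dead end. For $i_0\neq j_0$, rescaling the $j_0$-th component by $\mu^{-1}$ conjugates the minorant $D\mathcal{J}+a_*I+\mu^{-1}f_0\eta E_{i_0j_0}$ to the same operator with $\mu=1$. So its spectral bound does not depend on $\mu$.
\item The single-ball function $\chi_B(e_{i_0}+\mu^{1/2}e_{j_0})$ fails for the reason you note yourself.
\item The sentence about ``balancing the powers of $\mu$ along this chain'' is a hope, not an argument.
\item The whole last step uses (\textbf{R}), which is not a hypothesis of the proposition.
\end{itemize}

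That last point cannot be repaired, because the statement is false as written. Take $m=2$, $A\equiv -I$ and $F\equiv E_{12}$, i.e.\ $f_{12}\equiv 1$ and all other $f_{ij}\equiv 0$. Then (\textbf{J}) and (\textbf{D}) hold, $A$ is cooperative, and $F$ is nonnegative and nonzero. With $S_\mu:=\mathrm{diag}(1,\mu^{-1})$ one checks directly that $\mathcal{L}_\mu=S_\mu^{-1}\mathcal{L}_1S_\mu$, so $s(\mathcal{L}_\mu)=s(\mathcal{L}_1)$ for every $\mu>0$.

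The paper's proof fails exactly at the obstacle you isolated. Its claim that $s(\mu^{-1}F(x))\to+\infty$ for some $x$ ``since $F\neq 0$'' is false whenever every $F(x)$ is nilpotent. Your diagnosis is sharper than the paper's argument. The right move is to add a hypothesis, such as (\textbf{S}), or $r(F(x_0))>0$ for some $x_0$, or (\textbf{R}), and then give a real proof.

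Under (\textbf{R}), with (\textbf{D}) and $\Omega$ connected, a pointwise sub-solution on one ball is the wrong tool. The matrix $A(x)+\mu^{-1}F(x)$ can be triangular at every $x$, so the blow-up must come from diffusion closing a loop. Argue with powers instead:
\begin{itemize}
\item Choose $c$ with $c+a_{ii}\ge 0$ and set $T_\mu:=cI+\mathcal{L}_\mu\ge 0$, so that $r(T_\mu)=c+s(\mathcal{L}_\mu)$. Work on $L^2$, which suffices by Lemma~\ref{l2.1}.
\item Weak irreducibility gives indices $i_0=k_0,k_1,\dots,k_\ell=j_0$ with $a_{k_{t+1}k_t}+f_{k_{t+1}k_t}\not\equiv 0$.
\item Compose multiplication by these entries with componentwise diffusions $d_k\mathcal{J}_k$, which carry mass between the balls where the entries are positive. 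Close the loop with $\mu^{-1}f_{i_0j_0}$ on $B$.
\item This yields a $\mu$-independent positive operator $P$, an integer $k$ and $\delta>0$ such that, for $\mu\le 1$,
\[
T_\mu^k\ge \mu^{-1}P \quad\text{and}\quad P(\chi_B e_{i_0})\ge \delta\,\chi_B e_{i_0}.
\]
\item Hence $T_\mu^k(\chi_Be_{i_0})\ge(\delta/\mu)\chi_Be_{i_0}$, and therefore $(c+s(\mathcal{L}_\mu))^k=r(T_\mu^k)\ge\delta/\mu\to+\infty$.
\end{itemize}
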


\begin{proof}
	Notice that
	$s(\mathcal{L}_{\mu})\ge  \max\limits_{x \in \overline{\Omega}}s(A(x)+\frac{1}{\mu}F(x))$. Since $A(x)$ is a cooperative  matrix, there exists  $c>0$ such that $A(x)+cI $ is a nonnegative   matrix. It follows that  $A(x)+cI+\frac{1}{\mu}F(x)> \frac{1}{\mu}F(x)$,
	and
	$s(A(x)+\frac{1}{\mu}F(x))\ge s(\frac{1}{\mu}F(x))-c$.
	One can prove that $\lim\limits_{\mu\to 0^{+}} s(\frac{1}{\mu}F(x)) = +\infty$ for some $x\in \overline{\Omega}$ since $F\neq0$.
	Therefore, $\lim\limits_{\mu\to 0^{+}} s(\mathcal{L}_{\mu})= +\infty$.
\end{proof}

Next, we establish a necessary and sufficient condition for the existence of a unique zero solution to $s(\mathcal{L}_{\mu}) $. Define the operators $  \mathcal{L}_{\infty}: X \to X$ and $ \mathcal{F} : X \to X$ by
\[
\left[ \mathcal{L}_{\infty} \phi\right] (x) := D \left[ \mathcal{J}u\right](x) + A(x)\phi(x), \quad \left[ \mathcal{F}\phi\right] (x):= F(x)\phi(x), \quad  \phi\in X.
\]
According to \cite[Theorem 3.5]{MR2505085}, we have the following conclusion.

\begin{proposition}\label{pr3.3}
	Assume that $s( \mathcal{L}_{\infty}) <0$. Then $s(\mathcal{L}_{\mu})$ has the same sing as $\frac{1}{\mu}r(-\mathcal{F} \mathcal{L}_{\infty}^{-1}) -1$.
\end{proposition}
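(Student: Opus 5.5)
The plan is to apply the abstract result \cite[Theorem 3.5]{MR2505085} directly. That result concerns a resolvent-positive operator $B$ with $s(B)<0$ and a positive operator $C$, and states that $s(B+C)$ has the same sign as $r(-CB^{-1})-1$. We will take $B=\mathcal{L}_{\infty}$ and $C=\frac{1}{\mu}\mathcal{F}$, both acting on $X$ with cone $X_{+}$. Then $\mathcal{L}_{\mu}=B+C$ and $r(-CB^{-1})=\frac{1}{\mu}r(-\mathcal{F}\mathcal{L}_{\infty}^{-1})$. So
\[
\operatorname{sign} s(\mathcal{L}_{\mu})=\operatorname{sign}\Bigl(\tfrac{1}{\mu}r(-\mathcal{F}\mathcal{L}_{\infty}^{-1})-1\Bigr).
\]
All the work lies in checking the hypotheses of that theorem.

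The first hypothesis is that $X_{+}$ is a normal and generating cone in the Banach lattice $X=C(\overline\Omega,\mathbb{R}^m)$, which is standard. The second is that $\mathcal{L}_{\infty}$ is a bounded resolvent-positive operator. To see this, choose $c>0$ such that $A(x)+cI\ge 0$ on $\overline\Omega$; this is possible because $A$ is cooperative with continuous entries. Then $cI+\mathcal{L}_{\infty}=D\mathcal{J}+(A+cI)$ is a positive operator. For real $\lambda>s(\mathcal{L}_{\infty})$, write $\lambda-\mathcal{L}_{\infty}=(\lambda+c)-(cI+\mathcal{L}_{\infty})$. As in Section 2, $r(cI+\mathcal{L}_{\infty})=c+s(\mathcal{L}_{\infty})<\lambda+c$, so the Neumann series
\[
(\lambda-\mathcal{L}_{\infty})^{-1}=\sum_{k\ge 0}(\lambda+c)^{-k-1}(cI+\mathcal{L}_{\infty})^k
\]
converges and is positive. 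Equivalently, $\mathcal{L}_{\infty}$ generates the positive semigroup $e^{-ct}e^{t(cI+\mathcal{L}_{\infty})}$.

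The third hypothesis is that $\mathcal{F}$ is a bounded positive operator on $X$. This holds because $F(x)$ is nonnegative with continuous entries.

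Under the assumption $s(\mathcal{L}_{\infty})<0$, we may take $\lambda=0$ in the Neumann series. This gives $-\mathcal{L}_{\infty}^{-1}\ge 0$, so $-\mathcal{F}\mathcal{L}_{\infty}^{-1}$ is a bounded positive operator and its spectral radius is well defined.

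The step most likely to cause trouble is the non-compactness in this nonlocal setting. The cited theorem uses only resolvent positivity, positivity of $C$, and the facts that $s(B+C)$ is continuous and monotone along the family $B+tC$. It does not require a principal eigenvalue. I will check that the version I invoke makes no compactness or irreducibility assumption. If it needs $s(\mathcal{L}_{\mu})$ to be attained or continuous in the perturbation parameter, I will instead use Proposition \ref{pr3.1}, which gives continuity of $s(\mathcal{L}_{\mu})$, together with its monotonicity in $\mu$, to run the standard Thieme-type argument by hand. For example, if $r:=r(-\mathcal{F}\mathcal{L}_{\infty}^{-1})<\mu$, then $\lambda-\mathcal{L}_{\mu}$ is invertible for all $\lambda\ge 0$ via the series
\[
(\lambda-\mathcal{L}_{\infty})^{-1}\sum_{k\ge 0}\Bigl(\tfrac{1}{\mu}\mathcal{F}(\lambda-\mathcal{L}_{\infty})^{-1}\Bigr)^k,
\]
which gives $s(\mathcal{L}_{\mu})<0$. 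The reverse inequalities then follow by the usual positivity argument.
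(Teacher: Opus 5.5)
Your proposal is correct and follows the paper, whose proof of this proposition is simply an appeal to \cite[Theorem 3.5]{MR2505085} with $B=\mathcal{L}_{\infty}$ and $C=\frac{1}{\mu}\mathcal{F}$. Your check of the hypotheses (normal generating cone, resolvent positivity via the shift $cI+\mathcal{L}_{\infty}$, positivity of $\mathcal{F}$) is more explicit than the paper's; you should add that Thieme's framework also requires $\mathcal{L}_{\mu}=B+C$ to be resolvent positive, which follows by the same shift argument since $A+\frac{1}{\mu}F$ is cooperative.
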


\begin{theorem}\label{t3.2}
	There exists a unique $\mu_{0}>0$ such that $s(\mathcal{L}_{\mu_{0}})=0 $ if and only if    $s(\mathcal{L}_{\infty})<0$.
\end{theorem}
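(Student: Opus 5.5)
The proof splits into the two implications. It rests on three facts established earlier: continuity of $s(\mathcal{L}_\mu)$ in $\mu$ (Proposition \ref{pr3.1}, under (\textbf{R})), the blow-up $s(\mathcal{L}_\mu)\to+\infty$ as $\mu\to0^+$ (Proposition \ref{pr3.4}), and the sign relation of Proposition \ref{pr3.3}.

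\emph{Sufficiency.} Assume $s(\mathcal{L}_\infty)<0$ and set $\mu_0:=r(-\mathcal{F}\mathcal{L}_\infty^{-1})$. First I check that $\mu_0>0$. Since $s(\mathcal{L}_\infty)<0$ and $\mathcal{L}_\infty+cI$ is positive for large $c$, the inverse satisfies
\[
-\mathcal{L}_\infty^{-1}=\int_0^\infty e^{t\mathcal{L}_\infty}\,\mathrm{d}t\ge 0 .
\]
Moreover, $-\mathcal{L}_\infty^{-1}\phi\ge \frac{1}{c+s}$-type multiples of $\phi$: indeed $(-\mathcal{L}_\infty)^{-1}\ge (c-\text{stuff})^{-1}I$, which follows from the Neumann series
\[
(c I-(\mathcal{L}_\infty+cI))^{-1}=\sum_k c^{-k-1}(\mathcal{L}_\infty+cI)^k\ge c^{-1}I .
\]
Hence $-\mathcal{F}\mathcal{L}_\infty^{-1}\ge c^{-1}\mathcal{F}$. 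Because $F\neq0$ is nonnegative, some diagonal-type iterate, or a localized test function near a point where $F(x)$ has a positive entry, shows $r(-\mathcal{F}\mathcal{L}_\infty^{-1})>0$; this positivity is the step needing care. A cleaner route is to avoid it altogether and argue directly with $s$: Proposition \ref{pr3.3} gives $s(\mathcal{L}_\mu)<0$ for $\mu>\mu_0$. If $\mu_0$ were $0$, then $s(\mathcal{L}_\mu)<0$ for every $\mu>0$, contradicting Proposition \ref{pr3.4}. So $\mu_0>0$, and Proposition \ref{pr3.3} then gives $s(\mathcal{L}_{\mu_0})=0$, $s(\mathcal{L}_\mu)>0$ for $\mu<\mu_0$, and $s(\mathcal{L}_\mu)<0$ for $\mu>\mu_0$. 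This yields both existence and uniqueness. Note that this step does not even need continuity.

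\emph{Necessity.} Suppose a unique $\mu_0$ with $s(\mathcal{L}_{\mu_0})=0$ exists. By the ordering $\frac1\mu F\ge0$, $s(\mathcal{L}_\mu)$ is nonincreasing in $\mu$ and $s(\mathcal{L}_\mu)\ge s(\mathcal{L}_\infty)$ for all $\mu$, so $s(\mathcal{L}_\infty)\le 0$. It remains to exclude $s(\mathcal{L}_\infty)=0$, and I expect this to be the main obstacle. Uniqueness forces $s(\mathcal{L}_\mu)<0$ for $\mu>\mu_0$, since by monotonicity $s(\mathcal{L}_\mu)\le0$ there and the zero is unique. Fix some $\mu_1>\mu_0$ with $s(\mathcal{L}_{\mu_1})<0$. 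Since $\mathcal{L}_\infty\le\mathcal{L}_{\mu_1}$ in the sense of the Collatz–Wielandt formula, Theorem \ref{t2.1} gives
\[
s(\mathcal{L}_\infty)\le s(\mathcal{L}_{\mu_1})<0 .
\]
Applying Theorem \ref{t2.1} requires (\textbf{M}) for $A$, which may fail. In that case I would instead use the approximation argument in the proof of Theorem \ref{t2.1}, or the positivity of the resolvent: for $\lambda>s(\mathcal{L}_{\mu_1})$ we have $(\lambda-\mathcal{L}_\infty)^{-1}\le(\lambda-\mathcal{L}_{\mu_1})^{-1}$, a resolvent comparison for positive perturbations. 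This shows that the resolvent of $\mathcal{L}_\infty$ exists for all $\lambda>s(\mathcal{L}_{\mu_1})$, and the standard characterization of the spectral bound of a resolvent-positive operator then gives $s(\mathcal{L}_\infty)\le s(\mathcal{L}_{\mu_1})<0$. This completes the equivalence.
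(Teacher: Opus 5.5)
Your proof is correct relative to the paper's earlier results, and your sufficiency half takes a different, more economical route than the paper. The paper first uses perturbation theory to get $s(\mathcal{L}_\mu)<0$ for large $\mu$. It then obtains existence from the intermediate value theorem, using continuity (Proposition~\ref{pr3.1}) and blow-up (Proposition~\ref{pr3.4}), and calls on Proposition~\ref{pr3.3} only for uniqueness.

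You instead read existence and uniqueness together off Proposition~\ref{pr3.3} with $\mu_0:=r(-\mathcal{F}\mathcal{L}_\infty^{-1})$, using Proposition~\ref{pr3.4} only to rule out $\mu_0=0$. This has three advantages:
\begin{itemize}
\item it drops the perturbation step;
\item it drops Proposition~\ref{pr3.1}, whose hypothesis (\textbf{R}) does not appear in the theorem;
\item it identifies $\mu_0$ explicitly, which is exactly what Proposition~\ref{pr5.1} uses later.
\end{itemize}
It is also the modification the paper itself suggests in Section 4.

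Your necessity half is the paper's argument with the comparison $s(\mathcal{L}_\infty)\le s(\mathcal{L}_{\mu_1})$ spelled out. The quickest justification is that $0\le \mathcal{L}_\infty+cI\le \mathcal{L}_{\mu_1}+cI$ as positive operators, so their spectral radii are ordered, and $s(\cdot)=r(\cdot+cI)-c$. Neither Theorem~\ref{t2.1} nor (\textbf{M}) is needed.

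One caution. In your route, Proposition~\ref{pr3.4} carries the entire burden of showing $r(-\mathcal{F}\mathcal{L}_\infty^{-1})>0$. Your instinct that this is ``the step needing care'' was right, so do not read ``this step does not even need continuity'' as ``no irreducibility is needed.'' Your abandoned bound $-\mathcal{F}\mathcal{L}_\infty^{-1}\ge c^{-1}\mathcal{F}$ only gives $r\ge c^{-1}\max_{x} r(F(x))$, which is $0$ if every $F(x)$ is nilpotent. The paper's proof of Proposition~\ref{pr3.4} (via $s(\frac{1}{\mu}F(x))\to+\infty$) fails at the same point.

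For instance, take $m=2$, $d_1=d_2=1$, $J_1=J_2=J$, $A\equiv -2I$ and $F\equiv\begin{pmatrix}0&1\\0&0\end{pmatrix}$. Then $\mathcal{L}_\mu$ is block upper triangular, with both diagonal blocks equal to $\phi\mapsto\int_\Omega J(\cdot-y)\phi(y)\,\mathrm{d}y-2\phi$. Hence $s(\mathcal{L}_\mu)=s(\mathcal{L}_\infty)\le -1$ for every $\mu>0$, so both Proposition~\ref{pr3.4} and the sufficiency direction fail. Here $A+F$ violates (\textbf{R}).

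So an irreducibility hypothesis like (\textbf{R}) is genuinely required; the paper imports it only implicitly, via Proposition~\ref{pr3.1}. In your argument it has to be used to prove $r(-\mathcal{F}\mathcal{L}_\infty^{-1})>0$. One way is to take a closed cycle through a nonzero entry of $F$ in the coupling graph of $A+F$, and combine it with the spatial spreading that $J_i(0)>0$ produces on the connected domain $\Omega$.
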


\begin{proof}
	We first prove the necessity.
	Since $s(\mathcal{L}_{\mu})$ is nonincreasing in $\mu$ and the uniqueness of $\mu_{0}>0$, we have $s(\mathcal{L}_{\mu})<0 $ whenever $\mu > \mu_{0}$. It follows that $s(\mathcal{L}_{\infty})<0$.

	Next, we prove the sufficiency. 	By the  perturbation theory for linear operators(see, e.g., \cite[Section IV.3]{MR203473}), there exists $\mu_{\ast}>0$ such that $s(\mathcal{L}_{\mu})< 0$ for all $\mu \ge \mu_{\ast}$.  Then the existence of $\mu_{0}$ follows directly from Propositions \ref{pr3.1}  and \ref{pr3.4}. Uniqueness is  guaranteed by Proposition \ref{pr3.3}.
\end{proof}

We now prove the main theorem of this section, establishing variational  characterizations of the weighted eigenvalue. The proof relies on the characterizations of the spectral bound developed in Section 2.

\begin{theorem}\label{2.1}
	Assume that {\rm(\textbf{R})} holds and there exists a unique  $\mu_{0}>0$ such that $s(\mathcal{L}_{\mu_{0}})=0 $. Then
	\[
	\begin{aligned}
		\mu_{0}= &\inf_{\phi \in \mathrm{Int}(X_{+}) } \sup_{x \in \Omega, i \in \mathcal{S}} \dfrac{\sum_{j=1}^{m}f_{ij}(x)\phi_{j}(x)}{-d_i \int_{\Omega} J_i(x- y) \phi_i(y) \,\mathrm{d}y - \sum_{j=1}^{m}  a_{ij}(x)\phi_{j}(x)}
		\\
		=& \sup_{\phi \in \mathrm{Int}(X_{+}) } \inf_{x \in \Omega, i \in \mathcal{S}} \dfrac{\sum_{j=1}^{m}f_{ij}(x)\phi_{j}(x)}{-d_i \int_{\Omega} J_i(x- y) \phi_i(y) \,\mathrm{d}y - \sum_{j=1}^{m}   a_{ij}(x)\phi_{j}(x)}.
	\end{aligned}
	\]
\end{theorem}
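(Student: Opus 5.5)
Throughout I read each quotient through the inequality it encodes. For $\phi\in\mathrm{Int}(X_+)$ and $\mu>0$, the bound ``quotient $\le\mu$ at $(x,i)$'' means
\[
\sum_j f_{ij}\phi_j\le\mu\Bigl(-d_i\!\int_\Omega J_i(x-y)\phi_i(y)\,\mathrm{d}y-\sum_j a_{ij}\phi_j\Bigr)\quad\text{at }x .
\]
The bound ``quotient $\ge\mu$ at $(x,i)$'' is the reverse inequality. It holds automatically where the denominator is $\le 0$, so the quotient is taken as $+\infty$ there, with the expression $0/0$ included in this convention.

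With this reading, ``$\sup_{x,i}\text{quotient}\le\mu$'' is exactly $[\mathcal{L}_\mu\phi](x)\le 0$ on $\overline\Omega$, and ``$\inf_{x,i}\text{quotient}\ge\mu$'' is exactly $[\mathcal{L}_\mu\phi](x)\ge0$. The whole proof will therefore rest on Theorem \ref{t2.1} applied to $\mathcal{L}_\mu$. This is legitimate because (\textbf{R}) makes $A+\frac1\mu F$ weakly irreducible, so (\textbf{M}) holds for every $\mu>0$.

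The first step is the sign structure of $s(\mathcal{L}_\mu)$. It is nonincreasing in $\mu$ and continuous by Proposition \ref{pr3.1}, tends to $+\infty$ as $\mu\to0^+$ by Proposition \ref{pr3.4}, and vanishes only at $\mu_0$. Together these give $s(\mathcal{L}_\mu)>0$ for $\mu<\mu_0$ and $s(\mathcal{L}_\mu)<0$ for $\mu>\mu_0$.

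For the inf--sup formula I argue in two directions.

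Upper bound. Fix $\mu>\mu_0$. Since $s(\mathcal{L}_\mu)=\bar\lambda_p<0$, there is $\phi\in\mathrm{Int}(X_+)$ with $\mathcal{L}_\mu\phi\le\frac{s(\mathcal{L}_\mu)}2\phi\ll0$. Then the denominators are positive, since they exceed $\frac1\mu F\phi\ge0$, and every quotient is $<\mu$. Hence inf--sup $\le\mu$, and letting $\mu\downarrow\mu_0$ gives inf--sup $\le\mu_0$.

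Lower bound. Suppose some $\phi$ has $\sup_{x,i}\text{quotient}=\mu'<\mu_0$. Then $\mathcal{L}_{\mu'}\phi\le0$, so $\bar\lambda_p(\mathcal{L}_{\mu'})\le0$. By Theorem \ref{t2.1} this means $s(\mathcal{L}_{\mu'})\le0$, contradicting $s(\mathcal{L}_{\mu'})>0$.

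The sup--inf formula is symmetric and uses $\underline\lambda_p$.

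Lower bound. For $\mu<\mu_0$ we have $\underline\lambda_p=s(\mathcal{L}_\mu)>0$. This yields $\phi\in\mathrm{Int}(X_+)$ with $\mathcal{L}_\mu\phi\ge0$, hence every quotient is $\ge\mu$. Therefore sup--inf $\ge\mu$, and letting $\mu\uparrow\mu_0$ gives sup--inf $\ge\mu_0$.

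Upper bound. Suppose some $\phi$ has $\inf_{x,i}\text{quotient}=\mu''>\mu_0$. Then $\mathcal{L}_{\mu''}\phi\ge0$, so $s(\mathcal{L}_{\mu''})=\underline\lambda_p\ge0$. This contradicts $s(\mathcal{L}_{\mu''})<0$.

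The main obstacle is the bookkeeping around nonpositive denominators. In the inf--sup direction this never bites, because the test functions we produce have strictly positive denominators. In the sup--inf direction, a $\phi$ with $\mathcal{L}_\mu\phi\ge0$ can have denominators $\le 0$ at some points, where $F\phi$ may vanish. So I must check that the $+\infty$ convention is the correct one, and that with it the equivalence with $\mathcal{L}_\mu\phi\ge0$ holds pointwise, including points with zero numerator. If one insists instead on admissible $\phi$ with positive denominators, I would try to repair the argument by perturbing. The candidate is $\phi+\delta\psi$, where $\psi\in\mathrm{Int}(X_+)$ satisfies $\mathcal{L}_\infty\psi\ll0$; such a $\psi$ exists because $s(\mathcal{L}_\infty)<0$ by Theorem \ref{t3.2}, together with Theorem \ref{t2.1} applied to $\mathcal{L}_\infty$. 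I expect this step to be delicate. Weak irreducibility of $A$ alone is not assumed, so the application of Theorem \ref{t2.1} to $\mathcal{L}_\infty$ needs a separate justification, for example via an approximation argument.
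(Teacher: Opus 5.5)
Your proof is correct and is essentially the paper's argument. Both read the inf--sup and the sup--inf as $\inf\{\mu>0:\exists\phi\in\mathrm{Int}(X_+),\ \mathcal{L}_\mu\phi\le 0\}$ and $\sup\{\mu>0:\exists\phi\in\mathrm{Int}(X_+),\ \mathcal{L}_\mu\phi\ge 0\}$. Both then combine $s(\mathcal{L}_\mu)=\bar\lambda_p=\underline\lambda_p$ (Theorem \ref{t2.1}) with the sign change of $s(\mathcal{L}_\mu)$ at $\mu_0$, and monotonicity plus uniqueness already give that sign change, so Propositions \ref{pr3.1} and \ref{pr3.4} are not needed.

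Your $+\infty$ convention for nonpositive denominators is exactly the identification the paper uses silently in its ``it is easy to see'' step, so the perturbation repair in your last paragraph is unnecessary. Do not pursue a positive-denominator reading either: under it the sup--inf would be at most $0$ whenever some row of $F$ vanishes identically, which (\textbf{R}) allows, and the formula would fail.
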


\begin{proof}
	Let $\mu_1$ and $\mu_2$ denote the inf–sup and sup–inf in Theorem \ref{2.1}, respectively.
	It is easy to see that
	$$
	\mu_{1}= \inf\left\lbrace \mu>0: \exists \phi \in \mathrm{Int}(X_{+}) \ s.t. \ \left[ \mathcal{L_{\mu}\phi}\right](x)\le 0  \right\rbrace,
	$$
	and
	$\mu_{2} = \sup\left\lbrace \mu>0:  \exists \phi \in \mathrm{Int}(X_{+}) \ s.t. \ \left[ \mathcal{L_{\mu}\phi}\right](x)\ge 0  \right\rbrace$.
	Recall that
	\[
	\begin{aligned}
		s( \mathcal{L}_{\mu}) = & \inf \left\lbrace \lambda\in \mathbb{R}: \exists \phi \in \mathrm{Int}(X_{+}) \ s.t. \ \left[ \mathcal{L_{\mu}\phi}\right](x)\le \lambda \phi(x)  \right\rbrace \\
		= & \sup \left\lbrace \lambda\in \mathbb{R}: \exists \phi \in \mathrm{Int}(X_{+}) \ s.t. \ \left[ \mathcal{L_{\mu}\phi}\right](x)\ge \lambda \phi(x)  \right\rbrace.
	\end{aligned}
	\]
	Since $s(\mathcal{L}_{\mu})$ is nonincreasing for $\mu >0$ and there exists a unique $\mu_{0}>0$ such that $s(\mathcal{L}_{\mu_{0}})=0$, it follows that $s(\mathcal{L}_{\mu})<0$ for all $\mu > \mu_{0}$ and $s(\mathcal{L}_{\mu})>0$ for all $\mu < \mu_{0}$. Consequently, for any $\mu > \mu_{0}$, we can find $\psi\in \operatorname{Int}(X_{+})$ satisfying $\mathcal{L}_{\mu} \psi\le 0$, which implies $\mu_{0}\ge \mu_{1}$. If $\mu_{0}> \mu_{1}$, by the definition of $\mu_{1}$, there exist some $\mu'\in (\mu_{1}, \mu_{0})$ and $\varphi \in \operatorname{Int}(X_{+})$ such that $\mathcal{L}_{\mu'} \varphi\le 0$. This would give $s(\mathcal{L}_{\mu'}) \le 0$,  a contradiction. Therefore, $\mu_{0}=\mu_{1}$.

	Similarly, for any $\mu < \mu_{0}$, there exists $\psi\in \operatorname{Int}(X_{+})$ with $\mathcal{L}_{\mu} \psi\ge 0$, meaning $\mu_{0}\le \mu_{2}$. If $\mu_{0}< \mu_{2}$, then one can find $\mu'\in ( \mu_{0}, \mu_{2})$ and $\varphi \in \operatorname{Int}(X_{+})$ such that $\mathcal{L}_{\mu'} \varphi\ge 0$, which would imply $s(\mathcal{L}_{\mu'}) \ge 0$, again a contradiction. Hence, $\mu_{0}=\mu_{2}$.
\end{proof}

\begin{theorem}\label{2.3}
	Assume that  {\rm{(\textbf{S})}} holds and there exists a unique  $\mu_{0}>0$ such that $s(\mathcal{L}_{\mu_{0}})=0 $. Then
	\[
	\mu_{0}= \sup_{\substack{\phi \in  L^{2} \\  \left\| \phi \right\| _{L^{2}}=1}} \dfrac{\int_{\Omega}\phi^{T}(x)F(x)\phi(x)\,\mathrm{d}x}{-\int_{\Omega}\left[  \phi^{T}(x)D \left[ \mathcal{J\phi}\right](x) +  \phi^{T}(x) A(x) \phi(x)\right]\,\mathrm{d}x }.
	\]
\end{theorem}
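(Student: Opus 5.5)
Under (\textbf{S}) each $\mathcal{L}_\mu$ is self-adjoint on $L^2$, so Theorem \ref{t2.2} applies to it with $M=A+\frac1\mu F$. For $\|\phi\|_{L^2}=1$ write
\[
Q(\phi):=\int_\Omega\left[\phi^TD[\mathcal J\phi]+\phi^TA\phi\right]\mathrm{d}x,\qquad P(\phi):=\int_\Omega\phi^TF\phi\,\mathrm{d}x .
\]
Theorem \ref{t2.2} then reads
\[
s(\mathcal{L}_\mu)=\sup_{\|\phi\|_{L^2}=1}\Big[Q(\phi)+\tfrac1\mu P(\phi)\Big].
\]
The plan is to use this formula to show that $\mu_0$ equals the supremum $\mu^*$ of $P(\phi)/(-Q(\phi))$. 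First I must settle the sign of the denominator. By Theorem \ref{t3.2}, uniqueness of $\mu_0$ gives $s(\mathcal{L}_\infty)<0$. Theorem \ref{t2.2} applied to $\mathcal{L}_\infty$ gives $\sup Q=s(\mathcal{L}_\infty)<0$, so $-Q(\phi)\ge -s(\mathcal{L}_\infty)>0$ for every unit $\phi$. Hence the quotient is well defined and nonnegative, since $F(x)\ge0$ makes $P(\phi)$... not necessarily nonnegative for a nonnegative symmetric matrix. This does not matter, because only the supremum is used.

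Next I show $\mu_0\ge\mu^*$. Suppose some unit $\phi$ has $P(\phi)/(-Q(\phi))>\mu_0$. Since $-Q(\phi)>0$, this means $Q(\phi)+\frac1{\mu_0}P(\phi)>0$. By the variational formula this gives $s(\mathcal{L}_{\mu_0})>0$, contradicting $s(\mathcal{L}_{\mu_0})=0$. So every quotient is at most $\mu_0$.

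Then I show $\mu_0\le\mu^*$. Fix any $\mu<\mu_0$. Since $s(\mathcal{L}_\mu)$ is nonincreasing and $\mu_0$ is the unique zero, $s(\mathcal{L}_\mu)>0$. This monotonicity argument is the one used in the proof of Theorem \ref{2.1}. Note that Proposition \ref{pr3.3} gives the sign directly without needing (\textbf{R}) for continuity. By the variational formula there is a unit $\phi$ with $Q(\phi)+\frac1\mu P(\phi)>0$. Dividing by $-Q(\phi)>0$ gives $P(\phi)/(-Q(\phi))>\mu$, so $\mu^*>\mu$. Letting $\mu\uparrow\mu_0$ gives $\mu^*\ge\mu_0$.

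The main obstacle is justifying $s(\mathcal{L}_\mu)>0$ for $\mu<\mu_0$ under only (\textbf{S}) plus existence and uniqueness. I would get it from Proposition \ref{pr3.3}: since $s(\mathcal{L}_\infty)<0$, the sign of $s(\mathcal{L}_\mu)$ equals that of $\frac{1}{\mu}r(-\mathcal F\mathcal L_\infty^{-1})-1$. This forces $\mu_0=r(-\mathcal F\mathcal L_\infty^{-1})$ and gives positivity for all $\mu<\mu_0$. The remaining point, the positivity of the denominator, comes directly from $s(\mathcal{L}_\infty)<0$ as above.
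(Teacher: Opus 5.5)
Your proof is correct and follows essentially the same route as the paper. Both arguments apply the Rayleigh--Ritz formula of Theorem \ref{t2.2} to $\mathcal{L}_\mu$ and combine it with the fact that $s(\mathcal{L}_\mu)$ is nonincreasing with unique zero $\mu_0$; your two directions are just contrapositive rearrangements of the paper's. You also make explicit the bound $-Q(\phi)\ge -s(\mathcal{L}_\infty)>0$, which the paper uses tacitly when it divides. Your detour through Proposition \ref{pr3.3} is unnecessary, since monotonicity plus uniqueness already forces $s(\mathcal{L}_\mu)>0$ for $\mu<\mu_0$ without any continuity.
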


\begin{proof}
	
	Denote the right‑hand side of the above equality by $\mu'$.
	For any $\mu < \mu'$, there exists some $\phi \in  L^{2}$ with $ \left\| \phi \right\| _{L^{2}}=1$ such that
	\[
	\mu<\dfrac{\int_{\Omega}\phi^{T}(x)F(x)\phi(x)\,\mathrm{d}x}{-\int_{\Omega}\left[  \phi^{T}(x)D \left[ \mathcal{J\phi}\right](x) +  \phi^{T}(x) A(x) \phi(x)\right]\,\mathrm{d}x}.
	\]
	This implies
	\[
	\int_{\Omega}\left[  \phi^{T}(x)D \left[ \mathcal{J\phi}\right](x) +  \phi^{T}(x) A(x) \phi(x) + \frac{1}{\mu}\phi^{T}(x) F(x) \phi(x)\right] \, \mathrm{d}x>0,
	\]
	and consequently $s(\mathcal{L}_{\mu})>0$. By the monotonicity of $\mu$, we obtain $\mu < \mu_0$. Since $\mu$ was arbitrary, it follows that $\mu' \le \mu_0$.
	
	Now take any $\mu > \mu'$. Then
	\[
	\mu > \dfrac{\int_{\Omega}\phi^{T}(x)F(x)\phi(x)\,\mathrm{d}x}{-\int_{\Omega}\left[  \phi^{T}(x)D \left[ \mathcal{J\phi}\right](x) +  \phi^{T}(x) A(x) \phi(x)\right]\,\mathrm{d}x}, \quad \forall \phi \in  L^{2} \text{ with }  \left\| \phi \right\| _{L^{2}}=1.
	\]
	Hence, for every such $\phi $, we have
	\[
	\int_{\Omega}\left[  \phi^{T}(x)D \left[ \mathcal{J\phi}\right](x) +  \phi^{T}(x) A(x) \phi(x) + \frac{1}{\mu}\phi^{T}(x) F(x) \phi(x)\right] \, \mathrm{d}x<0,
	\]
	which gives $s(\mathcal{L}_{\mu})\le0 $. Thus $\mu\ge \mu_{0}$, and by the arbitrariness of $\mu$, we conclude  $\mu'\ge \mu_{0} $. Combining both inequalities yields $\mu_0 = \mu'$, which completes the proof.
\end{proof}

\section{Partially degenerate case}

In this section, we continue to study the variational characterizations of spectral bounds and weighted eigenvalues for nonlocal dispersal operators. We extend the results from Sections 2 and 3 to a partially degenerate case by weakening assumption (\textbf{D}) to (\textbf{D$'$}):
\begin{itemize}
	\item [(\textbf{D$'$})] There exists some $1 \le k<m$ such that $d_{i}>0$ for all $1\le i\le k$, and $d_{i}=0$ for all $k< i\le m$.
\end{itemize}
Throughout this section, we always assume that (\textbf{J}) and (\textbf{D$'$}) hold. To ensure that the conclusions in Sections 2 and 3 can be extended, we need to strengthen the irreducibility assumption on $M$, $A$ and $F$:
\begin{itemize}
	\item [(\textbf{RM})] $M$ is strongly irreducible,  meaning that $M(x)$ is irreducible for every $x\in \overline{\Omega}$;
	
	\item [(\textbf{RR})] $A+F$ is  strongly irreducible.
\end{itemize}
Note that condition (\textbf{RM}) ensures that, in the partially degenerate case, if the operator
$L$ admits a principal eigenvalue, then the corresponding eigenfunction is strongly positive(see \cite[Lemma 3.4]{MR4803717}).

We point out that in Section 2, Propositions \ref{pro2.1} and \ref{pro2.2}, Lemma \ref{l2.1}, and Theorem \ref{t2.2} remain valid for the partially degenerate case. As for the proof of Lemma \ref{l2.2}, it relies on assumption (\textbf{D}); however, under assumption (\textbf{RM}), an analogous conclusion has been established in \cite[Theorem A]{MR4803717}. The proof of Theorem \ref{t2.1} depends on the Touching Lemma together with Lemma \ref{l2.2}. Since the Touching Lemma still holds in the partially degenerate setting, see \cite[Lemma 1.8]{MR4601060}, Theorem \ref{t2.1} can be proved once we additionally impose assumption (\textbf{RM}). Consequently, we arrive at the following results:

\begin{theorem}\label{th4.1}
	The following statements hold:
	
	\begin{itemize}
		\item [\rm(i)] If $L$ is a self-adjoint operator, then
		\[
		s(L)= \sup_{\substack{\phi \in  L^{2}\\ \left\| \phi \right\| _{L^{2}}=1}} \int_{\Omega}\left[  \phi^{T}(x)D \left[ \mathcal{J\phi}\right](x) +  \phi^{T}(x) M(x) \phi(x) \right] \, \mathrm{d}x.
		\]

		\item[\rm(ii)] Assume that {\rm(\textbf{RM})} holds. Then
		\[
		\begin{aligned}
			s(L)
			=& \inf_{\phi \in \mathrm{Int}(X_{+})} \sup_{x \in \Omega, i \in \mathcal{S}} \frac{d_i \int_{\Omega} J_i(x- y) \phi_i(y) \,\mathrm{d}y + \sum_{j=1}^m m_{ij}(x)\phi_{j}(x) }{\phi_i(x)}\\[0.5em]
			=& \sup_{\phi \in  \mathrm{Int}(X_{+})} \inf_{x \in \Omega, i \in \mathcal{S}} \frac{d_i \int_{\Omega} J_i(x- y) \phi_i(y) \,\mathrm{d}y + \sum_{j=1}^m m_{ij}(x)\phi_{j}(x) }{\phi_i(x)}.
		\end{aligned}
		\]
	\end{itemize}

\end{theorem}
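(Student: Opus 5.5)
For part (i), I will follow Theorem \ref{t2.2}. Nothing there uses (\textbf{D}): $L_{L^2}$ is a bounded self-adjoint operator on $L^2$, so the classical variational formula gives $\sup\sigma(L_{L^2})=\sup_{\|\phi\|_{L^2}=1}\langle L\phi,\phi\rangle$. Since the spectrum is real, this supremum is $s(L_{L^2})$. Lemma \ref{l2.1} then transfers the identity to $s(L_C)$. Its proof uses only Proposition \ref{pro2.2}, which rests on compactness of $D\mathcal{J}$ (still true when some $d_i=0$), and the regularity bootstrap $\phi=(s-\mathcal{M})^{-1}D\mathcal{J}\phi\in X$. The bootstrap is valid because $s>\max_x s(M(x))$ makes $s-M(x)$ invertible with continuous inverse. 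So (i) is a direct transcription.

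For part (ii), I will repeat the scheme of Theorem \ref{t2.1} in three steps.

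\textbf{Step one.} The Touching Lemma \cite[Lemma 1.8]{MR4601060}, valid in the partially degenerate case, gives $\bar\lambda_p\ge\underline\lambda_p$.

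\textbf{Step two.} Suppose $L$ admits a principal eigenvalue. By (\textbf{RM}) and \cite[Lemma 3.4]{MR4803717}, its eigenfunction $\phi$ is strongly positive. Since $\phi$ is continuous by the bootstrap above, $\phi\in\mathrm{Int}(X_+)$. Using $\phi$ as a test function in both definitions gives $\bar\lambda_p\le s(L)\le\underline\lambda_p$, hence equality.

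\textbf{Step three.} In general I will use the perturbation $\underline M^{\varepsilon}$ of Theorem \ref{t2.1}. It changes only the diagonal, so $\underline M^{\varepsilon}(x)$ has the same off-diagonal pattern as $M(x)$ and (\textbf{RM}) is preserved. The function $s(\underline M^{\varepsilon}(x))$ attains its maximum $\lambda-2\varepsilon$ on the set $\Omega_\varepsilon$. In place of Lemma \ref{l2.2}, I will invoke \cite[Theorem A]{MR4803717}, which under (\textbf{RM}) shows that the corresponding operator $L^\varepsilon$ admits a principal eigenvalue. Then Step two applies to $L^\varepsilon$. Finally, the monotonicity sandwich \eqref{eq2.1}, which follows from $\underline M^\varepsilon\le M\le\underline M^\varepsilon+2\varepsilon I$ and is independent of $D$, lets me pass $\varepsilon\to0$.

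The main obstacle is Step three: checking that \cite[Theorem A]{MR4803717} really applies to $L^\varepsilon$. The direct analogue of Lemma \ref{l2.2} fails when $d_i=0$, because the test function $\eta w$ gains nothing from $\mathcal{J}$ in the degenerate components. The issue is that $\Omega_\varepsilon$ may fail to contain an open set, for example if $s(M(\cdot))$ attains $\lambda$ at a single point. Then the plateau where $s(\underline M^\varepsilon(x))=\lambda-2\varepsilon$ is only a closed set with nonempty interior relative to $\{s(M)>\lambda-\varepsilon\}$.

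If the cited theorem needs an open plateau, I will first try to use continuity of $s(M(\cdot))$: the set $\{x: s(M(x))>\lambda-\varepsilon\}$ is a nonempty open set, and $s(\underline M^\varepsilon(x))$ is constant there. Failing that, I will prove the degenerate analogue of Lemma \ref{l2.2} by hand. Strong irreducibility gives a continuous, strongly positive eigenvector $w(x)$ of $M(x)$. On a ball $B$ in the plateau, the nondegenerate components of $\eta w$ get a uniform gain $c_0\sigma$ from $\mathcal{J}$. Irreducibility of $M(x)$ then propagates this gain to the degenerate components through a small perturbation $w+\delta v$, with $v$ chosen along the irreducible coupling. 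This gives $L(\eta\tilde w)\ge(\lambda-2\varepsilon+\kappa)\eta\tilde w$ for some $\kappa>0$, so $s(L^\varepsilon)>\max_x s(\underline M^\varepsilon(x))$, and Proposition \ref{pro2.2} yields the principal eigenvalue.
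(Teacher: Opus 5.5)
Your proposal is correct and takes essentially the same route as the paper. Part (i) is Theorem \ref{t2.2} combined with Lemma \ref{l2.1}, whose proof survives partial degeneracy. Part (ii) reruns the proof of Theorem \ref{t2.1} with the Touching Lemma, uses strong positivity of the principal eigenfunction from \cite[Lemma 3.4]{MR4803717}, and replaces Lemma \ref{l2.2} by \cite[Theorem A]{MR4803717} for the approximating operators $L^{\varepsilon}$.

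Your concern about the plateau is unnecessary, and your fallback plan is not needed. For every $\varepsilon>0$, the set $\{x\in\Omega: s(M(x))>\lambda-\varepsilon\}$ is a nonempty open subset of $\Omega_\varepsilon$, and on it $s(\underline M^{\varepsilon}(x))\equiv\lambda-2\varepsilon$. So you do not need to build a degenerate version of Lemma \ref{l2.2} by hand.
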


In the proof of Proposition \ref{pr3.4}, the result is readily seen to extend to the partially degenerate case. Proposition \ref{pr3.3} is a direct consequence of \cite[Theorem 3.5]{MR2505085}, which itself remains valid under partial degeneracy. The proof of Proposition \ref{pr3.1} relies on Theorem \ref{t2.1}; consequently, to guarantee its validity in the partially degenerate setting we must impose the additional assumption (\textbf{RR}).

As for Theorem \ref{t3.2}, the necessity part clearly persists under partial degeneracy. In the original proof of sufficiency, Proposition \ref{pr3.1} is invoked; however, the argument can be modified by using Propositions \ref{pr3.4} and \ref{pr3.3} instead to establish the existence and uniqueness of $\mu_{0}$. Finally, the proofs of Theorems \ref{2.1} and \ref{2.3} depend, respectively, on Theorems \ref{t2.1} and \ref{t2.2}. We therefore arrive at the following conclusions:

\begin{theorem}\label{th4.2}
	The following statements are valid:

	\begin{itemize}
		
		\item [\rm(i)] There exists a unique $\mu_{0}>0$ such that $s(\mathcal{L}_{\mu_{0}})=0 $ if and only if    $s(\mathcal{L}_{\infty})<0$.

		\item[\rm(ii)] Assume that {\rm (\textbf{RR})}	holds. Then $s(\mathcal{L}_{\mu})$ is continuous with respect to $\mu>0$.

		\item [\rm(iii)] Assume that {\rm(\textbf{RR})} holds and there exists a unique  $\mu_{0}>0$ such that $s(\mathcal{L}_{\mu_{0}})=0 $. Then
		\[
		\begin{aligned}
			\mu_{0}= &\inf_{\phi \in \mathrm{Int}(X_{+}) } \sup_{x \in \Omega, i \in \mathcal{S}} \dfrac{\sum_{j=1}^{m}f_{ij}(x)\phi_{j}(x)}{-d_i \int_{\Omega} J_i(x- y) \phi_i(y) \,\mathrm{d}y - \sum_{j=1}^{m}  a_{ij}(x)\phi_{j}(x)}
			\\
			=& \sup_{\phi \in \mathrm{Int}(X_{+}) } \inf_{x \in \Omega, i \in \mathcal{S}} \dfrac{\sum_{j=1}^{m}f_{ij}(x)\phi_{j}(x)}{-d_i \int_{\Omega} J_i(x- y) \phi_i(y) \,\mathrm{d}y - \sum_{j=1}^{m}   a_{ij}(x)\phi_{j}(x)}.
		\end{aligned}
		\]
		
		\item[\rm(iv)] 	Assume that  {\rm{(\textbf{S})}} holds and there exists a unique  $\mu_{0}>0$ such that $s(\mathcal{L}_{\mu_{0}})=0 $. Then
		\[
		\mu_{0}= \sup_{\substack{\phi \in  L^{2}\\ \left\| \phi \right\| _{L^{2}}=1}} \dfrac{\int_{\Omega}\phi^{T}(x)F(x)\phi(x)\,\mathrm{d}x}{-\int_{\Omega}\left[  \phi^{T}(x)D \left[ \mathcal{J\phi}\right](x) +  \phi^{T}(x) A(x) \phi(x)\right]\,\mathrm{d}x }.
		\]

	\end{itemize}
\end{theorem}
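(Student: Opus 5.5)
The plan is to transfer the non-degenerate arguments of Section 3 to setting (\textbf{D$'$}), replacing each use of Theorem \ref{t2.1} by Theorem \ref{th4.1}(ii) and each use of Theorem \ref{t2.2} by Theorem \ref{th4.1}(i). The main new work is in (i), where the continuity argument of Proposition \ref{pr3.1} is not available without (\textbf{RR}).

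For (i), necessity is verbatim as in Theorem \ref{t3.2}. The map $\mu\mapsto s(\mathcal{L}_\mu)$ is nonincreasing, since $F\ge 0$ and $cI+\mathcal{L}_\mu$ is a positive operator, so $s$ is monotone in the potential. Uniqueness of $\mu_0$ then forces $s(\mathcal{L}_\mu)<0$ for $\mu>\mu_0$. Since $\mathcal{L}_\mu\ge\mathcal{L}_\infty$, we get $s(\mathcal{L}_\infty)<0$.

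For sufficiency, assume $s(\mathcal{L}_\infty)<0$ and apply Proposition \ref{pr3.3}, which rests on \cite[Theorem 3.5]{MR2505085} and holds under partial degeneracy. It gives that $\operatorname{sign} s(\mathcal{L}_\mu)=\operatorname{sign}\bigl(\tfrac1\mu r(-\mathcal{F}\mathcal{L}_\infty^{-1})-1\bigr)$. Set $r_0:=r(-\mathcal{F}\mathcal{L}_\infty^{-1})$, and note that $-\mathcal{L}_\infty^{-1}$ is a positive operator because $s(\mathcal{L}_\infty)<0$. We must show $r_0>0$. If $r_0=0$, then $s(\mathcal{L}_\mu)<0$ for every $\mu>0$, which contradicts Proposition \ref{pr3.4}; that proposition holds in the degenerate case because its proof uses only $s(\mathcal{L}_\mu)\ge\max_x s(A(x)+\frac1\mu F(x))$. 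Hence $r_0>0$. The sign relation then shows that $s(\mathcal{L}_\mu)=0$ exactly when $\mu=r_0$, so $\mu_0=r_0$ exists and is unique. No continuity of $s$ in $\mu$ is needed.

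For (ii), I repeat the proof of Proposition \ref{pr3.1}. The only input is that for each $\mu$ and each $\varepsilon>0$ there are $\phi,\psi\in\mathrm{Int}(X_+)$ with $\mathcal{L}_\mu\phi\le(s+\varepsilon/2)\phi$ and $\mathcal{L}_\mu\psi\ge(s-\varepsilon/2)\psi$. Under (\textbf{RR}), $A(x)+\frac1\mu F(x)$ is irreducible for every $x$, because it has the same off-diagonal zero pattern as $A+F$. So (\textbf{RM}) holds for $\mathcal{L}_\mu$, and Theorem \ref{th4.1}(ii) supplies $\phi$ and $\psi$. The constants $C$, $C'$ and the resulting $\delta$ are then exactly as before.

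For (iii) and (iv), I copy the proofs of Theorems \ref{2.1} and \ref{2.3}. For (iii), (\textbf{RR}) gives the Collatz--Wielandt identities for $s(\mathcal{L}_\mu)$ via Theorem \ref{th4.1}(ii). Together with monotonicity of $s(\mathcal{L}_\mu)$ and uniqueness of $\mu_0$, these yield $\mu_1=\mu_0=\mu_2$ by the same contradiction argument. One caveat arises: for $k<i\le m$ the denominator reduces to $-\sum_j a_{ij}\phi_j$, and this must be handled with the convention that quotients are read through the equivalent inequality formulation $\mathcal{L}_\mu\phi\lessgtr 0$, as in the proof of Theorem \ref{2.1}. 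For (iv), (\textbf{S}) makes $\mathcal{L}_\mu$ self-adjoint on $L^2$ for every $\mu$. Theorem \ref{th4.1}(i) then gives the Rayleigh--Ritz formula, and the two-sided comparison in the proof of Theorem \ref{2.3} goes through unchanged.

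The main obstacle is (i), specifically ruling out $r_0=0$ and confirming that Proposition \ref{pr3.4} genuinely holds with some $d_i=0$. I would verify the latter first; the rest is bookkeeping.
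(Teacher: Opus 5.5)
Your plan follows the paper's own: necessity verbatim, sufficiency from Propositions~\ref{pr3.3} and \ref{pr3.4} with no continuity needed, and (ii)--(iv) by rerunning the Section~3 proofs with Theorem~\ref{th4.1} in place of Theorems~\ref{t2.1} and \ref{t2.2}. Parts (ii)--(iv) are fine; your remark that $A+\frac{1}{\mu}F$ has the same off-diagonal zero pattern as $A+F$ is exactly what (ii) and (iii) need. The gap is in (i), at the step ``if $r_0=0$ this contradicts Proposition~\ref{pr3.4}''. You checked that $s(\mathcal{L}_\mu)\ge\max_x s(A(x)+\frac{1}{\mu}F(x))$ survives $d_i=0$. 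The weak point, however, is the other ingredient of Proposition~\ref{pr3.4}: the claim that $s(\frac{1}{\mu}F(x))\to+\infty$ for some $x$ because $F\neq 0$. This is false whenever every $F(x)$ is nilpotent, and in that case $r_0=0$ really occurs. Take $m=2$, $d_1=\frac12$, $d_2=0$, $A\equiv -I$ and
\[
F\equiv\begin{pmatrix}0&1\\0&0\end{pmatrix}.
\]
Then $\mathcal{L}_\mu$ is upper triangular with diagonal blocks $\frac12\mathcal{J}_1-I$ and $-I$, so $s(\mathcal{L}_\mu)\le-\frac12$ for every $\mu>0$. At the same time $s(\mathcal{L}_\infty)<0$, and $-\mathcal{F}\mathcal{L}_\infty^{-1}\phi=(\phi_2,0)^T$ is nilpotent. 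So the ``if'' direction of (i) is false as stated, and no argument can close this step. The obstruction has nothing to do with the degeneracy of $D$: with $d_2=\frac12$ the same example defeats Proposition~\ref{pr3.4} in Section~3 too. The paper's sketch of (i) rests on precisely this step, so you have inherited its gap rather than created one.

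What (i) is missing is an irreducibility hypothesis. (\textbf{RR}) suffices; in fact irreducibility of $A(x_0)+F(x_0)$ at a single point with $F(x_0)\neq 0$ is enough. Pick $f_{ij}(x_0)>0$.
If $i=j$, then $s(A(x_0)+\frac{1}{\mu}F(x_0))\ge a_{ii}(x_0)+\frac{1}{\mu}f_{ii}(x_0)\to+\infty$.
If $i\neq j$, irreducibility gives a path from $j$ back to $i$ through positive off-diagonal entries of $A(x_0)+F(x_0)$. Together with the entry $(i,j)$ this is a cycle of length $\ell\le m$. Choose $c$ with $cI+A(x_0)\ge 0$, take $\mu\le 1$, and set $N_\mu:=cI+A(x_0)+\frac{1}{\mu}F(x_0)$. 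Then $(N_\mu^\ell)_{ii}\ge\kappa/\mu$ with $\kappa>0$ independent of $\mu$, so $s(A(x_0)+\frac{1}{\mu}F(x_0))=r(N_\mu)-c\ge(\kappa/\mu)^{1/\ell}-c\to+\infty$.
This restores Proposition~\ref{pr3.4}, hence $r_0>0$, and the rest of your argument for (i) goes through.
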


\section{The basic reproduction ratio}

In this section, we apply the previously established results to characterize the basic reproduction ratio. Consider the following nonlocal dispersal systems:
\begin{equation}\label{eq5.1}
	\begin{cases}
		\dfrac{	\partial u}{\partial t}= D[\mathcal{J}u(t,\cdot)](x) + A(x)u(t,x) +  F(x)u(t,x), & t>0,  x \in \Omega,\\
		u(0,x)=u_{0}(x), & x\in \Omega,
	\end{cases}
\end{equation}
and
\begin{equation}\label{eq5.2}
	\begin{cases}
		\dfrac{	\partial u}{\partial t}= D[\mathcal{J}u(t,\cdot)](x) + A(x)u(t,x), & t>0,  x \in \Omega,\\
		u(0,x)=u_{0}(x), & x\in \Omega.
	\end{cases}
\end{equation}
By the general theory of semigroups, the system \eqref{eq5.2} generates  a $C_0$-semigroup $\Phi(t)$ on $X$ such that $\left[ \Phi(t)u_{0}\right](x)= u(t,x)$, where $u(t,x)$ denotes the solution to system \eqref{eq5.2} with the initial condition $u(0, x)=u_0(x)$. Define the operators $Q, \hat Q: X \to X$ by
\[
[Qu](x):= \int_{0}^{+\infty} F(x)[\Phi(t)u](x)\,\mathrm{d}t, \qquad
[\hat Q u](x):= \int_{0}^{+\infty} [\Phi(t)\mathcal{F}u](x)\,\mathrm{d}t, \quad u\in X.
\]
By the Gelfand's formula (see, e.g., \cite[Theorem
VI.6]{MR751959}), it is easy to verify that $r(Q)=r(\hat Q)$.
Clearly, $Q= -\mathcal{F}\mathcal{L}_{\infty}^{-1}$, see \cite[Theorem 3.12]{MR2505085}.
The spectral radius of $Q$ or $\hat Q$ is then defined as the basic reproduction ratio for system \eqref{eq5.1}, i.e.,
$$
\mathcal{R}_{0}= r(Q) \text{ or } \mathcal{R}_{0} =r (\hat Q).
$$

To apply the theory of the basic reproduction ratio, as developed in \cite{MR3032845,MR3612966,MR3992071,MR2505085}, we impose the following assumption:
\begin{itemize}
	\item [(\textbf{L})] $s(\mathcal{L}_\infty)<0$.
\end{itemize}
According to \cite[Theorem 3.5]{MR2505085} (or Proposition \ref{pr3.3}) together with Theorems \ref{t3.2} and \ref{th4.2}, we obtain the following result.

\begin{proposition}\label{pr5.1}
	Assume that {\rm(\textbf{J})} and {\rm(\textbf{L})} hold and that  either  {\rm(\textbf{D})} or  {\rm(\textbf{D}$'$)} is satisfied. Then $\mathcal{R}_{0}-1$ has the same sing as $s(\mathcal{L}_{1})$, and $ \mathcal{R}_{0}$ is the unique solution of $s(\mathcal{L}_{\mu})=0$.
\end{proposition}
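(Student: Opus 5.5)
The plan is to reduce everything to Proposition \ref{pr3.3}, which is \cite[Theorem 3.5]{MR2505085} applied to the pair $(\mathcal{L}_\infty,\mathcal{F})$, and then use the identification $Q=-\mathcal{F}\mathcal{L}_\infty^{-1}$, so that $\mathcal{R}_0=r(Q)=r(-\mathcal{F}\mathcal{L}_\infty^{-1})$. Under (\textbf{L}), $\mathcal{L}_\infty$ generates the positive semigroup $\Phi(t)$ with negative spectral bound. Hence $-\mathcal{L}_\infty^{-1}=\int_0^\infty\Phi(t)\,\mathrm{d}t$ is a bounded positive operator, and $Q$ is well defined and positive. This holds whether (\textbf{D}) or (\textbf{D}$'$) is assumed, since neither the generation argument nor the resolvent identity uses positivity of the $d_i$.

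\textbf{Step 1: the sign statement.} Take $\mu=1$ in Proposition \ref{pr3.3}. We get that $s(\mathcal{L}_1)$ has the same sign as $r(-\mathcal{F}\mathcal{L}_\infty^{-1})-1=\mathcal{R}_0-1$. In the partially degenerate case I would note, as the paper already remarks in Section 4, that \cite[Theorem 3.5]{MR2505085} only needs $\mathcal{L}_\infty$ to be a resolvent-positive generator with $s(\mathcal{L}_\infty)<0$ and $\mathcal{F}$ positive. Both hold under (\textbf{D}$'$).

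\textbf{Step 2: $\mathcal{R}_0$ solves $s(\mathcal{L}_\mu)=0$.} For fixed $\mu>0$, write $\mathcal{L}_\mu=\mathcal{L}_\infty+\mathcal{F}_\mu$ with $\mathcal{F}_\mu:=\mu^{-1}\mathcal{F}$. This is again positive, and $r(-\mathcal{F}_\mu\mathcal{L}_\infty^{-1})=\mathcal{R}_0/\mu$. Proposition \ref{pr3.3} then says that $s(\mathcal{L}_\mu)$ has the sign of $\mathcal{R}_0/\mu-1$. There are three cases: $s(\mathcal{L}_\mu)>0$ for $\mu<\mathcal{R}_0$, $s(\mathcal{L}_\mu)<0$ for $\mu>\mathcal{R}_0$, and $s(\mathcal{L}_{\mathcal{R}_0})=0$. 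This needs $\mathcal{R}_0>0$, so that $\mu=\mathcal{R}_0$ is admissible.

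\textbf{Step 3: positivity of $\mathcal{R}_0$ and uniqueness.} Theorem \ref{t3.2}, or Theorem \ref{th4.2}(i) in the degenerate case, gives under (\textbf{L}) a unique $\mu_0>0$ with $s(\mathcal{L}_{\mu_0})=0$. If $\mathcal{R}_0$ were $0$, Step 2 would give $s(\mathcal{L}_\mu)<0$ for every $\mu>0$. That contradicts Proposition \ref{pr3.4}, where $s(\mathcal{L}_\mu)\to+\infty$ as $\mu\to0^+$. So $\mathcal{R}_0>0$. The sign pattern from Step 2 then forces $\mu_0=\mathcal{R}_0$, because any zero of $s(\mathcal{L}_\mu)$ must satisfy $\mathcal{R}_0/\mu-1=0$. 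This gives uniqueness directly.

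The main obstacle is Step 2 at the exact value $\mu=\mathcal{R}_0$. There one must be sure that ``same sign'' in \cite[Theorem 3.5]{MR2505085} includes the zero case, that is, $s(\mathcal{L}_\mu)=0$ if and only if $r=1$. That theorem states the trichotomy including equality, so I would cite it in that form. If that were in doubt, I would instead use monotonicity of $s(\mathcal{L}_\mu)$ in $\mu$, together with the strict inequalities on either side. Continuity from Proposition \ref{pr3.1} or Theorem \ref{th4.2}(ii) would give it, but only under the irreducibility assumptions, so I would rely on the trichotomy.
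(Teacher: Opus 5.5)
Your Steps 1 and 2 are the paper's argument: Proposition \ref{pr3.3}, i.e.\ Thieme's Theorem 3.5, applied at $\mu=1$ and with $\mathcal{F}$ replaced by $\mu^{-1}\mathcal{F}$. Both steps are correct. Step 2 already gives existence and uniqueness of the zero once $\mathcal{R}_0>0$ is known, so you need neither Theorem \ref{t3.2} nor continuity, and you were right to avoid Proposition \ref{pr3.1}. The genuine gap is the one you isolated yourself, $\mathcal{R}_0>0$, and the paper shares it. You get it from Proposition \ref{pr3.4}. The paper relies on the sufficiency part of Theorems \ref{t3.2}/\ref{th4.2}(i), which also goes through Proposition \ref{pr3.4}. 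That proposition is false under (\textbf{J}), (\textbf{L}) and (\textbf{D})/(\textbf{D}$'$) alone. Its proof needs $s(\mu^{-1}F(x))=\mu^{-1}r(F(x))\to+\infty$ for some $x$, and this fails whenever every $F(x)$ is nilpotent.

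Here is a counterexample. Let $m=2$, $d_1=d_2=1$, $J_1=J_2=J$, $A\equiv-2I$, $F\equiv\bigl(\begin{smallmatrix}0&1\\0&0\end{smallmatrix}\bigr)$, and $\mathcal{K}u:=\int_\Omega J(\cdot-y)u(y)\,\mathrm{d}y$ on $C(\overline\Omega)$, so $\|\mathcal{K}\|\le1$. Then $\mathcal{L}_\mu(\phi_1,\phi_2)=\bigl((\mathcal{K}-2)\phi_1+\mu^{-1}\phi_2,\,(\mathcal{K}-2)\phi_2\bigr)$. For $\operatorname{Re}\lambda>-1$ the operator $\lambda+2-\mathcal{K}$ is invertible, and $(\lambda-\mathcal{L}_\mu)\phi=g$ can be solved triangularly. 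Hence $s(\mathcal{L}_\mu)\le-1$ for all $\mu>0$, and (\textbf{L}) holds. Moreover $Q(\phi_1,\phi_2)=\bigl((2-\mathcal{K})^{-1}\phi_2,0\bigr)$, so $Q^2=0$ and $\mathcal{R}_0=0$. The sign assertion survives, but $s(\mathcal{L}_\mu)=0$ has no solution. So the second assertion fails, and your Step 3 cannot be completed without an extra hypothesis forcing $\mathcal{R}_0>0$. Two natural choices:
\begin{itemize}
\item $r(F(x_0))>0$ for some $x_0$. This makes the argument of Proposition \ref{pr3.4} valid, since $s(\mathcal{L}_\mu)\ge\max_x s(A(x)+\mu^{-1}F(x))\ge\mu^{-1}r(F(x_0))-c$ with $c$ as in that proof.
\item An irreducibility condition such as (\textbf{R}) under (\textbf{D}), or (\textbf{RR}) under (\textbf{D}$'$). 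Then a cycle in the coupling graph of $A+F$ passing through a nonzero entry of $F$ forces $r(Q)>0$.
\end{itemize}
With either hypothesis $\mathcal{R}_0>0$, and your Step 2 finishes the proof.
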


Applying Theorems \ref{2.1}, \ref{2.3} and \ref{th4.2}, we obtain the main results of this section.

\begin{theorem}\label{th5.1}
	Assume that {\rm(\textbf{J})} and {\rm(\textbf{L})} hold. The following statements are valid:
	
	\begin{itemize}
		\item [\rm(i)] If {\rm(\textbf{S})} and either {\rm(\textbf{D})} or {\rm(\textbf{D}$'$)} hold, then
		\[
		\mathcal{R}_{0}= \sup_{\substack{\phi \in  L^{2}\\ \left\| \phi \right\| _{L^{2}}=1}} \dfrac{\int_{\Omega}\phi^{T}(x)F(x)\phi(x)\,\mathrm{d}x}{-\int_{\Omega}\left[  \phi^{T}(x)D \left[ \mathcal{J\phi}\right](x) +  \phi^{T}(x) A(x) \phi(x)\right]\,\mathrm{d}x }.
		\]
		
		\item[\rm(ii)]  If either {\rm(\textbf{D})} with {\rm(\textbf{R})} holds, or {\rm(\textbf{D}$'$)} with {\rm(\textbf{RR})} holds, then
		\[
		\begin{aligned}
			\mathcal{R}_{0}= &\inf_{\phi \in \mathrm{Int}(X_{+}) } \sup_{x \in \Omega, i \in \mathcal{S}} \dfrac{\sum_{j=1}^{m}f_{ij}(x)\phi_{j}(x)}{-d_i \int_{\Omega} J_i(x- y) \phi_i(y) \,\mathrm{d}y - \sum_{j=1}^{m}  a_{ij}(x)\phi_{j}(x)}
			\\
			=& \sup_{\phi \in \mathrm{Int}(X_{+}) } \inf_{x \in \Omega, i \in \mathcal{S}} \dfrac{\sum_{j=1}^{m}f_{ij}(x)\phi_{j}(x)}{-d_i \int_{\Omega} J_i(x- y) \phi_i(y) \,\mathrm{d}y - \sum_{j=1}^{m}   a_{ij}(x)\phi_{j}(x)}.
		\end{aligned}
		\]

	\end{itemize}

\end{theorem}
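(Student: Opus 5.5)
The theorem reduces entirely to results already established, once $\mathcal{R}_0$ is identified with the weighted eigenvalue $\mu_0$. The first step is to invoke Proposition \ref{pr5.1}. Under (\textbf{J}), (\textbf{L}) and either (\textbf{D}) or (\textbf{D}$'$), it says that $\mathcal{R}_0$ is the unique solution of $s(\mathcal{L}_\mu)=0$. Concretely, Theorem \ref{t3.2} (non-degenerate case) or Theorem \ref{th4.2}(i) (partially degenerate case) turns (\textbf{L}), i.e. $s(\mathcal{L}_\infty)<0$, into the existence of a unique $\mu_0>0$ with $s(\mathcal{L}_{\mu_0})=0$. Proposition \ref{pr3.3} then gives that $s(\mathcal{L}_\mu)$ has the sign of $\frac{1}{\mu}r(-\mathcal{F}\mathcal{L}_\infty^{-1})-1=\frac{1}{\mu}r(Q)-1$. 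Hence $s(\mathcal{L}_{\mathcal{R}_0})=0$, and so $\mu_0=\mathcal{R}_0$. The identification $Q=-\mathcal{F}\mathcal{L}_\infty^{-1}$ and $r(Q)=r(\hat Q)$ were recorded just before the theorem, so the choice between $Q$ and $\hat Q$ in defining $\mathcal{R}_0$ does not matter.

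For part (i), the hypothesis of Theorem \ref{2.3} (under (\textbf{D})) or of Theorem \ref{th4.2}(iv) (under (\textbf{D}$'$)) is now satisfied: (\textbf{S}) holds and a unique $\mu_0$ exists. Substituting $\mu_0=\mathcal{R}_0$ into the Rayleigh–Ritz quotient formula gives (i). The only check is that (\textbf{S}) makes $\mathcal{L}_\mu$ self-adjoint on $L^2$ for every $\mu$, as Theorem \ref{t2.2} and Theorem \ref{th4.1}(i) require. This holds because $J_i(x-y)=J_i(y-x)$ makes each $\mathcal{J}$-component symmetric, $D$ is diagonal, and $A$ and $F$ are symmetric.

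For part (ii), we apply Theorem \ref{2.1} under (\textbf{D}) with (\textbf{R}), or Theorem \ref{th4.2}(iii) under (\textbf{D}$'$) with (\textbf{RR}). Both give the inf–sup and sup–inf Collatz–Wielandt formulas for $\mu_0$, and replacing $\mu_0$ by $\mathcal{R}_0$ finishes the proof.

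The main obstacle is the identification $\mu_0=\mathcal{R}_0$, specifically the sign equivalence in Proposition \ref{pr3.3} in the partially degenerate setting, where the semigroup is not compact. There I would rely on \cite[Theorem 3.5]{MR2505085}: it needs only that $\Phi(t)$ is a positive semigroup with $s(\mathcal{L}_\infty)<0$ and that $\mathcal{F}$ is a positive bounded perturbation, and no compactness. Everything after that step is direct substitution.
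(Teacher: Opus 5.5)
Your proposal is correct and follows the paper's own route. You identify $\mathcal{R}_0$ with the unique zero $\mu_0$ of $s(\mathcal{L}_\mu)$ through Proposition~\ref{pr5.1} (that is, Proposition~\ref{pr3.3} with Theorem~\ref{t3.2} or Theorem~\ref{th4.2}(i)), then obtain (i) from Theorems~\ref{2.3} and \ref{th4.2}(iv) and (ii) from Theorems~\ref{2.1} and \ref{th4.2}(iii); one small refinement is to apply the sign relation at $\mu=\mu_0$, which gives $r(Q)=\mu_0$ directly without first needing $\mathcal{R}_0>0$ to make $\mu=\mathcal{R}_0$ admissible.
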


\section{Application}

In this section, we will apply the previously established theory to characterize the basic reproduction ratio of a nonlocal dispersal SIS epidemic model with saturation incidence, and investigate the influence of parameters on the basic reproduction ratio.
Consider the following nonlocal dispersal SIS epidemic model:
\begin{equation}\label{eq6.1}
	\begin{cases}
	\displaystyle	\frac{\partial S}{\partial t} = d_S \int_{\Omega} J(x-y)[S(t,y) - S(t,x)] \, \mathrm{d}y - \frac{\beta(x)S I}{m(x) + S + I} + \gamma(x)I, & x \in \Omega, \, t > 0, \\[1em]
	\displaystyle	\frac{\partial I}{\partial t} = d_I \int_{\Omega} J(x-y)[I(t,y) - I(t,x)] \, \mathrm{d}y + \frac{\beta(x)S I}{m(x) + S + I} - \gamma(x)I, & x \in \Omega, \, t > 0, \\[1em]
	S(x,0) = S_0(x), \, I(x,0) = I_0(x), & x \in \Omega,
	\end{cases}
\end{equation}
where $\Omega \subset \mathbb{R}^n$ is a smooth bounded domain; $S(t,x)$ and $I(t,x)$ denote the densities of susceptible and infectious individuals at location $x \in \Omega$ and time $t > 0$, respectively; the positive constants $d_S$ and $d_I$ are dispersal coefficients for susceptible and infectious individuals, respectively; $\beta(x)$ and $\gamma(x)$ are  continuous functions on $\Omega$ representing the transmission rate and recovery rate at $ x \in \Omega$, respectively; the continuous function $m(x)$  is the saturation coefficient, which reflects the finite contact capacity of individuals due to spatial or social constraints(see \cite{MR1118756}).
The integral operator $\int_{\Omega}J(x-y)[u(t,y) - u(t,x)]\, \mathrm{d}y$ represents a diffusion process that is confined to the domain $\Omega$. This confinement implies that no individuals can enter or leave the domain, which corresponds to the imposition of homogeneous Neumann  boundary conditions, see, e.g., \cite{MR2722295,MR3637938,MR3945624}.
We assume that
\begin{itemize}
	
	\item[(\textbf{A1})]   $J$ is a continuous nonnegative function with $J(0) > 0$, $J(-x)=J(x)$,  $\int_{\mathbb{R}^{n}} J(x)\, \mathrm{d}x = 1$ , $J \in L^{\infty}(\mathbb{R}^{n})$;
	
	\item[(\textbf{A2})] $\gamma$ and  $\beta$ are continuous nonnegative functions with $\gamma(x)\not\equiv 0$ and $\beta(x)\not\equiv 0$;
	
	\item[(\textbf{A3})] $m$ is a continuous positive  function;
	
	\item[(\textbf{A3}$'$)] $m$ is a continuous nonnegative function with the set $\varSigma:=\left\lbrace x\in \overline{\Omega}: m(x)=0 \right\rbrace $ is a nonempty  set.
\end{itemize}

Throughout this section, we always assume that (\textbf{A1}) and (\textbf{A2}) hold.
Clearly,
$$
\frac{\partial }{\partial t}\int_{\Omega}\left( S(t,x)+I(t,x)\right) \mathrm{d}x =0.
$$
Therefore, we denote $K:=\frac{1}{\left| \Omega\right| }\int_{\Omega}\left( S(t,x) \! + \! I(t,x)\right)  \mathrm{d}x $, and the disease-free equilibrium of the system \eqref{eq6.1} is $(K,0)$.

For $\beta(x)>0 $, $\gamma(x)>0$ and $m(x)>0$, system \eqref{eq6.1} was studied by Feng et al. \cite{Feng_Li_Yang_2024}. They established the threshold dynamics of the system in terms of the basic reproduction ratio and further examined how the diffusion coefficients influence both the basic reproduction ratio and the existence of endemic equilibrium. For models with random (local) diffusion instead of nonlocal dispersal, we refer to \cite{MR4510469,MR4687008,MR4675083} and the references therein.

Consider the following nonlocal dispersal system:
\begin{equation}\label{eq6.2}
	\begin{cases}
	\displaystyle	\dfrac{\partial u}{\partial t}= d_{I}  \int_{\Omega} J(x-y)[u(t,yt) - u(t,x)] \, \mathrm{d}y + \frac{K\beta(x)}{m(x) + K}u - \gamma(x)u, & x \in \Omega, \, t > 0, \\
		u(x,0) = u_0(x), & x \in \Omega.
	\end{cases}
\end{equation}
Let $Y:= C(\overline\Omega, \mathbb{R})$ equipped with the maximum norm, and $Y_{+}:= C(\overline\Omega, \mathbb{R}_{+})$.
By the general theory of semigroups, the system \eqref{eq6.2} generates  a $C_0$-semigroup $\Psi(t)$ on $Y$ such that $\left[ \Psi(t)u_{0}\right](x)= u(t,x)$, where $u(t,x)$ denotes the solution to system \eqref{eq6.2} with the initial condition $u(0, x)=u_0(x)$. Define an operator $P:Y\to Y$ by
\[
\left[ Pu\right] (x):= \int_{0}^{+\infty}\frac{K\beta(x)}{m(x) + K}\left[ \Psi(t)u\right](x) \, \mathrm{d}t, \quad u \in Y.
\]
The spectral radius of $P$ is then defined as the basic reproduction ratio for system \eqref{eq6.1}, i.e.,
$$
\mathcal{R}_{0}= r(P).
$$
We define the operators $\mathcal{\hat L}_{\mu} : Y \to Y$  and $\mathcal{\hat L}_{\infty} : Y\to Y$ as follows:
\[
\left[ \mathcal{\hat L}_{\mu}u\right] (x):= d_{I} \int_{\Omega} J(x-y)\left[ \phi(y)-\phi(x)\right] \, \mathrm{d}y  -\gamma(x) u(x) +\frac{K\beta(x)}{\mu \left( m(x) + K\right) } u(x), \quad u\in Y,
\]
and
$$
\left[ \mathcal{\hat L}_{\infty}u\right] (x):= d_{I} \int_{\Omega} J(x-y)\left[ \phi(y)-\phi(x)\right] \, \mathrm{d}y  -\gamma(x) u(x), \ u\in Y.
$$
Recall that
\[
s(\mathcal{\hat L}_{\infty})= \sup\limits_{\substack{u\in L^{2}(\Omega) \\  \left\| \phi \right\| _{L^{2}}=1} } \left\lbrace -\frac{d_{I}}{2} \int_{\Omega} \int_{\Omega} J(x - y)\left[ u(y) - u(x)\right] ^2 \, \mathrm{d}y\mathrm{d}x-\int_{\Omega}\gamma(x)u^{2}(x)\, \mathrm{d}x \right\rbrace.
\]
Denote
\[
\alpha:=\mathop{\inf}\limits_{\substack{ u\in L^{2}(\Omega),u\neq0 \\ \int_{\Omega}u(x)\,\mathrm{d}x=0}} \dfrac{\frac{1}{2}\int_{\Omega}\int_{\Omega}J(x-y)\left[u(y)-u(x)\right]^{2}\, \mathrm{d}y\mathrm{d}x }{\int_{\Omega}u^{2}(x)\, \mathrm{d}x}.
\]		
According to \cite[Proposition 3.4 and Lemma 3.5]{MR2722295}, we have
\begin{equation}\label{eq2.4}
	0<\alpha\le \mathop{\min}\limits_{x\in \overline{\Omega}}\int_{\Omega}J(x-y)\,\mathrm{d}y.
\end{equation}

\begin{proposition}\label{pr2.2}
	
	$s(\mathcal{\hat L}_{\infty})<0$.
	
\end{proposition}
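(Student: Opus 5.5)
We must show $s(\hat{\mathcal L}_\infty)<0$, where $\hat{\mathcal L}_\infty$ is self-adjoint on $L^2(\Omega)$ because $J$ is symmetric by (\textbf{A1}). The plan is to use the Rayleigh–Ritz formula recalled just before the statement. By Lemma~\ref{l2.1} and Theorem~\ref{t2.2}, applied with $m=1$ and $M(x)=-d_I\int_\Omega J(x-y)\,\mathrm{d}y-\gamma(x)$, this formula computes $s(\hat{\mathcal L}_\infty)$ on both $Y$ and $L^2$.

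Set
\[
Q(u):=\frac{d_I}{2}\int_\Omega\int_\Omega J(x-y)[u(y)-u(x)]^2\,\mathrm{d}y\mathrm{d}x+\int_\Omega\gamma u^2\,\mathrm{d}x .
\]
The goal is a uniform bound $Q(u)\ge c>0$ over all $\|u\|_{L^2}=1$. Note that merely showing the supremum is not attained is not enough: we need it to be strictly negative.

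Decompose $u=\bar u+v$, where $\bar u:=|\Omega|^{-1}\int_\Omega u$ is constant and $\int_\Omega v=0$, so that $\|u\|_{L^2}^2=\bar u^2|\Omega|+\|v\|_{L^2}^2=1$. The nonlocal Dirichlet form only sees $v$. By the definition of $\alpha$ and \eqref{eq2.4},
\[
\frac{d_I}{2}\int_\Omega\int_\Omega J(x-y)[u(y)-u(x)]^2\,\mathrm{d}y\mathrm{d}x\ \ge\ d_I\alpha\|v\|_{L^2}^2 .
\]
For the $\gamma$ term, $\gamma\ge0$ and $\gamma\not\equiv0$ give $\bar\gamma:=\int_\Omega\gamma>0$. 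Using $(a+b)^2\ge \tfrac12a^2-b^2$,
\[
\int_\Omega\gamma u^2\ \ge\ \tfrac12\bar u^2\bar\gamma-\|\gamma\|_\infty\|v\|_{L^2}^2 .
\]
This lower bound is weak when $v$ is large, so we split into two cases with a threshold $\delta>0$.

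If $\|v\|_{L^2}^2\ge\delta$, we simply drop the $\gamma$ term (it is nonnegative), giving $Q(u)\ge d_I\alpha\delta$. If $\|v\|_{L^2}^2<\delta$, then $\bar u^2=(1-\|v\|_{L^2}^2)/|\Omega|\ge(1-\delta)/|\Omega|$, and so
\[
Q(u)\ \ge\ \frac{(1-\delta)\bar\gamma}{2|\Omega|}-\|\gamma\|_\infty\delta .
\]
Choosing $\delta$ small makes this positive. Taking the minimum of the two case bounds yields a constant $c>0$ with $Q(u)\ge c$ for every $\|u\|_{L^2}=1$. Hence $s(\hat{\mathcal L}_\infty)=-\inf Q\le -c<0$.

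The main point needing care is this uniform positivity. It rests on $\alpha>0$ from \eqref{eq2.4}, which is exactly what compensates for the lack of compactness, rather than on any existence of a principal eigenfunction. We should also check that the Rayleigh–Ritz formula legitimately gives the spectral bound in $Y$, not just in $L^2$; this is supplied by Lemma~\ref{l2.1}.
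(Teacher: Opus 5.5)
Your proof is correct and rests on the same ingredients as the paper's. These are the mean decomposition $u=\bar u+v$, the nonlocal Poincar\'e constant $\alpha>0$ from \eqref{eq2.4}, and $\int_\Omega\gamma>0$; the paper argues by contradiction along a sequence with $Q(u_n)\to 0$ (forcing $u_n\to a$ in $L^2$ with $a^2=1/|\Omega|$), while your two-case estimate reaches the same conclusion directly and yields an explicit lower bound on $-s(\hat{\mathcal L}_\infty)$.
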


\begin{proof}
	It is easy to see $s(\mathcal{\hat L}_{\infty})\le 0$.
	Assume, by contradiction, that there exists a sequence $\{u_{n}\} \subset L^{2}(\Omega)$ with $\int_{\Omega}u_{n}^{2}(x)\, \mathrm{d}x=1$ such that
	\[
	\lim\limits_{n\to +\infty} \left( \frac{d_{I}}{2} \int_{\Omega} \int_{\Omega} J(x - y)\left[ u_{n}(y) - u_{n}(x)\right] ^2 \, \mathrm{d}y\mathrm{d}x+\int_{\Omega}\gamma(x)u_{n}^{2}(x)\, \mathrm{d}x \right) = 0.
	\]
	Denote
	$\tilde u_{n} := \frac{1}{\left| \Omega \right| } \int_{\Omega}u_{n}(x) \, \mathrm{d}x$.
	Since $\left|\tilde u_{n} \right| \le \frac{1}{\sqrt{\left| \Omega \right|} }$,
	there exists a subsequence, still denote by  $\tilde u_{n}$ such that $\tilde u_{n} \to a $ for some $a\in \mathbb{R}$.
	By \eqref{eq2.4}, we have
	\[
	\frac{d_{I}}{2} \int_{\Omega} \int_{\Omega} J(x - y)\left[ u_{n}(y) - u_{n}(x)\right] ^2 \, \mathrm{d}y\mathrm{d}x \ge d_{I}\alpha \int_{\Omega} \left( u_{n}(x)-\tilde u_{n}\right) ^{2}\, \mathrm{d}x.
	\]
	It follows that $\int_{\Omega} \left( u_{n}(x)-a\right)^{2} \, \mathrm{d}x\to 0$. Then, $a^{2}=\frac{1}{\left| \Omega \right| }$, and  we have
	\[
	\lim\limits_{n\to +\infty}\int_{\Omega}\gamma(x)u_{n}^{2}(x)\, \mathrm{d}x =  a^{2} \int_{\Omega}\gamma(x)\, \mathrm{d}x>0,
	\]
	which is a contradiction.
\end{proof}

Applying Proposition \ref{pr5.1} and Theorem \ref{th5.1}, we have the following conclusions.

\begin{theorem}\label{th6.1}
	
	The following statements are valid:
	
	\begin{itemize}
		\item [\rm (i)] $\mathcal{R}_{0}-1$ has the same sing as $s(\mathcal{\hat L}_{1})$, and $ \mathcal{R}_{0}$ is the unique solution of $s(\mathcal{ \hat L}_{\mu})=0$;
		
		\item [\rm (ii)]
		\[
		\begin{aligned}
			\mathcal{R}_{0}= & \sup_{\substack{\varphi \in L^2(\Omega) \\ \varphi \neq 0}} \frac{\int_{\Omega} \frac{K\beta(x)}{m(x) + K}\varphi^2(x)\, \mathrm{d}x}{\frac{d_{I}}{2} \int_{\Omega} \int_{\Omega} J(x - y)\left[ \varphi(y) - \varphi(x)\right] ^2 \, \mathrm{d}y\mathrm{d}x + \int_{\Omega} \gamma(x)\varphi^2(x)\, \mathrm{d}x}\\
			=&\inf_{\phi \in \mathrm{Int}(Y_{+}) } \sup_{x \in \Omega } \dfrac{\frac{K\beta(x)}{m(x) + K}\phi(x)}{-d_I \! \int_{\Omega} J(x- y) \phi(y) \,\mathrm{d}y \! +\! d_{I}\!\int_{\Omega}J(x-y)\,\mathrm{d}y\phi(x)\! + \!\gamma(x)\phi(x)}
			\\[0.4em]
			=& \sup_{\phi \in \mathrm{Int}(Y_{+}) } \inf_{x \in \Omega} \dfrac{\frac{K\beta(x)}{m(x) + K}\phi(x)}{-d_I \! \int_{\Omega} J(x- y) \phi(y) \,\mathrm{d}y \! + \! d_{I}\!\int_{\Omega}J(x-y)\,\mathrm{d}y\phi(x) \! + \! \gamma(x)\phi(x)}.
		\end{aligned}
		\]
		
	\end{itemize}
	
\end{theorem}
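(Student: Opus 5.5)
The plan is to view the scalar problem as the special case $m=1$ of the framework of Sections 3 and 5. Both conclusions will then follow from Proposition \ref{pr5.1} and Theorem \ref{th5.1}, once the hypotheses are checked and the abstract formulas are rewritten in the concrete form stated.

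Concretely, we take $D=d_I$ and $J_1=J$. Because of the term $-d_I\int_\Omega J(x-y)\,\mathrm{d}y\,\phi(x)$, we set
\[
A(x):=-d_I\int_{\Omega}J(x-y)\,\mathrm{d}y-\gamma(x),\qquad F(x):=\frac{K\beta(x)}{m(x)+K}.
\]
Then $\mathcal{\hat L}_\mu=\mathcal{L}_\mu$ and $\mathcal{\hat L}_\infty=\mathcal{L}_\infty$. We check the hypotheses in turn.

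\begin{itemize}
\item[(1)] Assumption (\textbf{J}) follows from (\textbf{A1}), and (\textbf{D}) holds since $d_I>0$.
\item[(2)] $A$ is continuous because $x\mapsto\int_\Omega J(x-y)\,\mathrm{d}y$ is continuous ($J\in L^\infty$ and $J$ is continuous, so dominated convergence applies). A $1\times 1$ matrix is trivially cooperative.
\item[(3)] $F$ is continuous and nonnegative; under (\textbf{A3}) or (\textbf{A3}$'$) we have $m+K>0$ since $K>0$. $F\not\equiv0$ because $\beta\not\equiv0$. Hence $F$ is a non-zero nonnegative $1\times1$ matrix.
\item[(4)] (\textbf{R}) holds automatically for $m=1$, since the index set $\{1\}$ cannot be split into two nonempty disjoint sets.
\item[(5)] (\textbf{S}) holds because $J$ is even by (\textbf{A1}) and scalars are symmetric.
\item[(6)] (\textbf{L}) is exactly Proposition \ref{pr2.2}.
\end{itemize}

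We must also confirm that $r(P)$ coincides with the $r(Q)$ of Section 5, where $Q=-\mathcal F\mathcal L_\infty^{-1}$. Since $\Psi(t)$ is the semigroup generated by $\mathcal{L}_\infty$ on $Y$, this is immediate. Part (i) is then Proposition \ref{pr5.1}, with sign correspondence $\mathcal R_0-1\sim s(\mathcal{\hat L}_1)$ and $\mathcal R_0$ the unique zero of $s(\mathcal{\hat L}_\mu)$.

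For part (ii), the two Collatz–Wielandt formulas come from Theorem \ref{th5.1}(ii) by substituting $A$ and $F$. The denominator $-d_I\int J\phi-A\phi$ becomes exactly
\[
-d_I\int_\Omega J(x-y)\phi(y)\,\mathrm{d}y+d_I\int_\Omega J(x-y)\,\mathrm{d}y\,\phi(x)+\gamma(x)\phi(x).
\]

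The Rayleigh quotient comes from Theorem \ref{th5.1}(i). The only computation is the symmetrization identity
\[
-\int_\Omega\varphi\,d_I\mathcal J\varphi-\int_\Omega A\varphi^2=\frac{d_I}{2}\int_\Omega\int_\Omega J(x-y)[\varphi(y)-\varphi(x)]^2\,\mathrm{d}y\,\mathrm{d}x+\int_\Omega\gamma\varphi^2 .
\]
To prove it, expand the square and use $J(x-y)=J(y-x)$ to see that $\int\!\!\int J(x-y)\varphi(x)^2=\int\!\!\int J(x-y)\varphi(y)^2$.

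Finally, the normalization $\|\varphi\|_{L^2}=1$ can be replaced by $\varphi\neq0$, because the quotient is $0$-homogeneous. The denominator is strictly positive for $\varphi\ne0$: by Theorem \ref{t2.2} it is at least $-s(\mathcal{\hat L}_\infty)\|\varphi\|^2_{L^2}>0$, using Proposition \ref{pr2.2}.

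The main point needing care is legitimacy rather than difficulty. One must check that $A$ as defined is continuous, so that it fits the standing assumptions on $a_{ij}$. One must also check that $\mathcal{\hat L}_\mu$ is self-adjoint on $L^2$, which is needed for Theorem \ref{t2.2} to apply. Self-adjointness follows from $J$ even, since $\mathcal J$ then has a symmetric kernel and $A$, $F$ act as real multiplication operators.
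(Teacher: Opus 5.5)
Your proposal is correct and is essentially the paper's own argument. The paper obtains Theorem \ref{th6.1} in one line by treating the problem as the scalar case of Section 5 and invoking Proposition \ref{pr5.1} and Theorem \ref{th5.1}; your checks of (\textbf{J}), (\textbf{D}), (\textbf{R}), (\textbf{S}) and (\textbf{L}) (via Proposition \ref{pr2.2}), the symmetrization identity, self-adjointness and positivity of the denominator supply the details the paper leaves implicit, and you rightly take $\Psi$ to be generated by $\mathcal{\hat L}_{\infty}$ so that $P=Q$, since the system \eqref{eq6.2} as printed includes the infection term, which is evidently a typo.
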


\begin{theorem}\label{th6.2}
	The following statements hold:
	
	\begin{itemize}
		\item [\rm (i)] $ \mathcal{R}_{0}$ is nonincreasing in $d_{I}$, and
		\[
		\lim\limits_{d_{I}\to 0} \mathcal{R}_{0}=  \sup_{x \in {\Omega}\setminus\left\lbrace x: \gamma(x)=0 \right\rbrace } \frac{K \beta(x)}{\gamma(x)(m(x)+K)}; \quad  \lim\limits_{d_{I}\to +\infty} \mathcal{R}_{0}= \frac{ \int_{\Omega} \frac{K\beta(x)}{m(x)+K} \, \mathrm{d}x}{\int_{\Omega}\gamma(x)\, \mathrm{d}x}.
		\]

		\item [\rm (ii)] $ \mathcal{R}_{0}$ is nondecreasing in $K$.  Moreover,
		\[
		\lim\limits_{K\to +\infty} \mathcal{R}_{0} \! = \! \sup_{\substack{\varphi \in L^2(\Omega) \\ \varphi \neq 0}} \frac{\int_{\Omega}\beta(x)\varphi^2(x)\, \mathrm{d}x}{\frac{d_{I}}{2} \int_{\Omega} \int_{\Omega} J(x - y)\left[ \varphi(y)\! - \! \varphi(x)\right] ^2 \, \mathrm{d}y\mathrm{d}x \! + \! \int_{\Omega} \gamma(x)\varphi^2(x)\, \mathrm{d}x};
		\]
		and, if {\rm(\textbf{A3})} holds, then $ \mathcal{R}_{0}$ is strictly increasing in $K$, and
		\[
		\lim\limits_{K\to 0} \mathcal{R}_{0} = 0;
		\]
		if {\rm(\textbf{A3}$'$)} holds, then
		\[
		\max\limits_{x\in \varSigma} \frac{\beta(x)}{d_{I}\int_{\Omega}J(x-y) \, \mathrm{d}y +  \gamma(x)} \le \lim_{K \to 0} \mathcal{R}_{0} \le  \max\limits_{x\in \varSigma}\frac{\beta(x)}{-s(\mathcal{\hat L}_{\infty})},
		\]
		moreover, if $\varSigma $  is a finite set, then
		\[
		\lim\limits_{K\to 0} \mathcal{R}_{0} =  \max\limits_{x\in \varSigma} \frac{\beta(x)}{d_{I}\int_{\Omega}J(x-y) \, \mathrm{d}y +  \gamma(x)}.
		\]
	\end{itemize}

\end{theorem}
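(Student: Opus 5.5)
The plan is to read everything off the variational formulas in Theorem \ref{th6.1}(ii). Write $w_K(x):=\frac{K\beta(x)}{m(x)+K}$ and
\[
Q(\varphi):=\frac{d_I}{2}\int_\Omega\int_\Omega J(x-y)[\varphi(y)-\varphi(x)]^2\,\mathrm{d}y\mathrm{d}x.
\]
Then $\mathcal{R}_0=\sup_{\varphi\neq0}\int_\Omega w_K\varphi^2\big/\bigl(Q(\varphi)+\int_\Omega\gamma\varphi^2\bigr)$. Two facts will be used repeatedly:
\[
Q(\varphi)+\int_\Omega\gamma\varphi^2\ge -s(\mathcal{\hat L}_\infty)\|\varphi\|_{L^2}^2,
\qquad
Q(\varphi)\le 2d_I\|\varphi\|_{L^2}^2 .
\]
The first holds with $-s(\mathcal{\hat L}_\infty)>0$ by Proposition \ref{pr2.2}; the second holds since $\int J=1$.

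\textbf{Part (i).} The denominator is nondecreasing in $d_I$ for each fixed $\varphi$, so $\mathcal{R}_0$ is nonincreasing in $d_I$.

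For $d_I\to0$, the upper bound comes from dropping $Q$, which gives $\mathcal{R}_0\le\sup_\varphi \int w_K\varphi^2/\int\gamma\varphi^2\le \sup_{\gamma\neq0} w_K/\gamma$. This is consistent with the value $+\infty$ when $w_K>0$ somewhere that $\gamma=0$. For the lower bound, I test with $\varphi=\mathbf 1_{B_r(x_0)}$, where $x_0$ nearly maximizes $w_K/\gamma$. I use $Q\le 2d_I\|\varphi\|^2$, let $d_I\to0$ first and then $r\to0$.

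For $d_I\to\infty$, the test function $\varphi\equiv1$ gives $\mathcal{R}_0\ge\int w_K/\int\gamma$ for every $d_I$. For the upper bound, take near-maximizers $\varphi_n$ with $\|\varphi_n\|_{L^2}=1$. The lower bound just obtained gives $Q(\varphi_n)\le \max w_K/\mathcal{R}_0\le C$. Then \eqref{eq2.4} gives $d_I\alpha\|\varphi_n-\tilde\varphi_n\|^2\le C$, so $\varphi_n$ approaches a constant in $L^2$ as $d_I\to\infty$. This yields $\limsup\mathcal{R}_0\le\int w_K/\int\gamma$, exactly as in the proof of Proposition \ref{pr2.2}.

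\textbf{Part (ii), monotonicity and $K\to+\infty$.} Since $K\mapsto K/(m+K)$ is nondecreasing, $\mathcal{R}_0$ is nondecreasing in $K$. Under (\textbf{A3}), for $K_1<K_2$ the ratio $w_{K_2}/w_{K_1}=\frac{K_2(m+K_1)}{K_1(m+K_2)}$ is bounded below by a constant $c>1$ on $\overline\Omega$, because $m$ is continuous and positive. Hence $\mathcal{R}_0(K_2)\ge c\,\mathcal{R}_0(K_1)$, which is strict monotonicity.

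As $K\to\infty$, $w_K\to\beta$ uniformly. The perturbation of the quotient is then at most $\|w_K-\beta\|_\infty/(-s(\mathcal{\hat L}_\infty))$, which gives the stated limit.

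\textbf{Part (ii), $K\to0$.} Under (\textbf{A3}), $w_K\le K\max\beta/\min m$, so $\mathcal{R}_0\le K\max\beta/(\min m\cdot(-s(\mathcal{\hat L}_\infty)))\to0$.

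Under (\textbf{A3}$'$), I prove the upper bound first. Fix $\delta>0$ and split $\overline\Omega$ by distance to $\varSigma$. Off the $\delta$-neighbourhood, $m\ge c_\delta>0$ and $w_K\le K\max\beta/c_\delta$. On the neighbourhood, $w_K\le\beta\le\max_\varSigma\beta+o_\delta(1)$ by continuity. Therefore $\limsup_{K\to0}\mathcal{R}_0\le(\max_\varSigma\beta+o_\delta(1))/(-s(\mathcal{\hat L}_\infty))$.

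For the lower bound I use the inf–sup formula and evaluate the inner sup at a point $x_0\in\varSigma$, where $w_K(x_0)=\beta(x_0)$. Since $-d_I\int J\phi\le0$, for every $\phi\in\mathrm{Int}(Y_+)$ the ratio at $x_0$ is at least $\beta(x_0)/(d_I\int J(x_0-y)\mathrm{d}y+\gamma(x_0))$. This lower bound holds for every $K$.

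The main obstacle is the matching upper bound when $\varSigma$ is finite. Here I use the inf–sup formula with the test function $\phi=\psi+N\eta$, built as follows.
\begin{itemize}
\item $\psi\in\mathrm{Int}(Y_+)$ satisfies $\mathcal{\hat L}_\infty\psi\le\frac{s(\mathcal{\hat L}_\infty)}{2}\psi$; it exists by Theorem \ref{t2.1}.
\item $\eta$ is a sum of continuous bumps of height $1$ supported on tiny balls $B_r$ around the points of $\varSigma$.
\item $N$ is large and $N|B_r|\|J\|_\infty$ is small.
\end{itemize}
On the bumps, $\phi\approx N$. There the denominator is $\approx N(d_I\int J+\gamma)$ up to $O(N|B_r|\|J\|_\infty)+O(1)$. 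Because $w_K\le\beta$ and $\beta$ is continuous, the ratio there is close to $\max_\varSigma\beta/(d_I\int J+\gamma)$.

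Off the bumps, the denominator is at least $\frac{-s}{2}\psi-d_IN|B_r|\|J\|_\infty$, which stays positive. Off $\delta$-neighbourhoods of $\varSigma$, $w_K=O(K)$. On the annular parts of the $\delta$-neighbourhoods outside the bumps, I expect the main technical work, since there $w_K$ need not be small. I would first try to choose the bump profile so that $N\eta$ dominates $\psi$ throughout the $\delta$-neighbourhood, with $\delta\to0$ after $K\to0$. Letting $K\to0$, then $r,\delta\to0$ and $N\to\infty$ in the right order should give the limit.
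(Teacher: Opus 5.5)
Your proposal is correct. For part (i), the monotonicity claims, the limit $K\to+\infty$, the case (\textbf{A3}) and the bound $\lim_{K\to0}\mathcal{R}_0\le\max_{\varSigma}\beta/(-s(\hat{\mathcal L}_\infty))$, you follow the paper's route: the Rayleigh quotient of Theorem~\ref{th6.1}, the constant test function together with \eqref{eq2.4}, and uniform perturbation of the weight $w_K$. Your $d_I\to0$ argument (drop $Q$ for the upper bound; indicator test functions with $Q(\varphi)\le 2d_I\|\varphi\|_{L^2}^2$ for the lower bound) actually proves the interchange of limit and supremum, which the paper only asserts. Your convention $w_K/\gamma=+\infty$ where $\gamma=0<w_K$ is also the right one. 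With the paper's literal supremum over $\Omega\setminus\{\gamma=0\}$, and its step ``otherwise the supremum would be $+\infty$'', the formula can fail when $\beta>0$ somewhere in the interior of $\{\gamma=0\}$. For the lower bound under (\textbf{A3}$'$), the paper uses $0=s(\hat{\mathcal L}_{\mathcal R_0})\ge\max_x$ of the multiplication part (essential spectrum), while you read it off the Collatz--Wielandt formula at $x_0\in\varSigma$. The two are equivalent; the cleanest form is that $\hat{\mathcal L}_\mu\phi\le0$ at $x_0$ with $\phi\gg0$ forces $\mu\ge\beta(x_0)/(d_I\int_\Omega J(x_0-y)\,\mathrm{d}y+\gamma(x_0))$.

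The genuinely different step is the matching upper bound for finite $\varSigma$. The paper takes near-maximizers $\phi_K$ of the Rayleigh quotient and splits their $L^2$-mass inside and outside $B_\varepsilon(x_0)$. It bounds the cross terms by $\kappa(\varepsilon)$ via $\|J\|_{L^\infty}$ and Cauchy--Schwarz, and finishes with a mediant inequality. You instead feed an explicit supersolution into the inf--sup formula, and the step you flag as the main difficulty is not an obstacle.

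Take $\eta\equiv1$ on $B_{r/2}(\varSigma)$ with $\operatorname{supp}\eta\subset B_r(\varSigma)$. Fix $N$, then choose $r$ with $Nd_I\|J\|_{L^\infty}|B_r(\varSigma)|\le\frac{-s}{4}\min\psi$, where $s=s(\hat{\mathcal L}_\infty)$. Set $D\phi:=-\hat{\mathcal L}_\infty\phi$ and $\underline j:=\min_{\overline\Omega}\int_\Omega J(x-y)\,\mathrm{d}y>0$. Then $D\phi\ge\frac{-s}{4}\psi+Nd_I\underline j\,\eta\ge c\phi$ on $\overline\Omega$, with $c:=\min\{\frac{-s}{4},d_I\underline j\}$.

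Off the plateau the quotient is at most $w_K/c\le K\|\beta\|_\infty/(c\min_{\operatorname{dist}(x,\varSigma)\ge r/2}m)$. This tends to $0$ because the test function is fixed while $K\to0$. On the plateau the quotient is at most $\beta(1+\|\psi\|_\infty/N)/(d_I\int_\Omega J(x-y)\,\mathrm{d}y+\gamma-d_I\|J\|_{L^\infty}|B_r(\varSigma)|)$. So no separate $\delta$ is needed: let $K\to0$, then $r\to0$, then $N\to\infty$.

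The paper's route stays inside the Rayleigh--Ritz framework and exhibits the concentration of near-maximizers at $\varSigma$, which it emphasizes afterwards. Yours is pointwise and needs no near-maximizers, nor a separate ``similar argument'' for several points. It does not use self-adjointness in this step. Since it only uses $|B_r(\varSigma)|\to0$, it yields the same limit for every $\varSigma$ of Lebesgue measure zero.
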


\begin{proof}
	
	(i)
	According to Theorem \ref{th6.1}, it is immediate that $\mathcal{R}_{0}$ is nonincreasing in $d_{I}$.
	Set
	$$
	\varUpsilon:=\left\lbrace  \phi \in L^{2}(\Omega) : \int_{\Omega}\gamma(x)\phi^{2}(x) \, \mathrm{d}x =0\right\rbrace.
	$$
	It is easy to show that $\overline{L^{2}(\Omega)\setminus\varUpsilon} = L^{2}(\Omega)$ thanks to assumption (\textbf{A2}). It follows that
	\[
	\begin{aligned}
		& \sup_{\substack{\varphi \in L^2(\Omega) \\ \varphi \neq 0}} \frac{\int_{\Omega} \frac{K\beta(x)}{m(x) + K}\varphi^2(x)\, \mathrm{d}x}{\frac{d_{I}}{2} \int_{\Omega} \int_{\Omega} J(x - y)\left[ \varphi(y) - \varphi(x)\right] ^2 \, \mathrm{d}y\mathrm{d}x + \int_{\Omega} \gamma(x)\varphi^2(x)\, \mathrm{d}x}\\
		= & \sup_{\substack{\varphi \in L^2(\Omega)\setminus\varUpsilon}} \frac{\int_{\Omega} \frac{K\beta(x)}{m(x) + K}\varphi^2(x)\, \mathrm{d}x}{\frac{d_{I}}{2} \int_{\Omega} \int_{\Omega} J(x - y)\left[ \varphi(y) - \varphi(x)\right] ^2 \, \mathrm{d}y\mathrm{d}x + \int_{\Omega} \gamma(x)\varphi^2(x)\, \mathrm{d}x}.
	\end{aligned}
	\]
	One can show that
	\[
	\begin{aligned}
		& \lim\limits_{d_{I}\to 0}\sup_{\substack{\varphi \in L^2(\Omega)\setminus\varUpsilon}} \frac{\int_{\Omega} \frac{K\beta(x)}{m(x) + K}\varphi^2(x)\, \mathrm{d}x}{\frac{d_{I}}{2} \int_{\Omega} \int_{\Omega} J(x - y)\left[ \varphi(y) - \varphi(x)\right] ^2 \, \mathrm{d}y\mathrm{d}x + \int_{\Omega} \gamma(x)\varphi^2(x)\, \mathrm{d}x} \\
		= &\sup_{\substack{\varphi \in L^2(\Omega)\setminus\varUpsilon}}\frac{\int_{\Omega} \frac{K\beta(x)}{m(x) + K}\varphi^2(x)\, \mathrm{d}x}{\int_{\Omega} \gamma(x)\varphi^2(x)\, \mathrm{d}x}.
	\end{aligned}
	\]
	Next, we  show that
	\[
	\sup_{\substack{\varphi \in L^2(\Omega)\setminus\varUpsilon}}\frac{\int_{\Omega} \frac{K\beta(x)}{m(x) + K}\varphi^2(x)\, \mathrm{d}x}{\int_{\Omega} \gamma(x)\varphi^2(x)\, \mathrm{d}x}= F: =\sup_{x \in {\Omega}\setminus\left\lbrace x: \gamma(x)=0 \right\rbrace } \frac{K \beta(x)}{\gamma(x)(m(x)+K)}.
	\]
	For every $x_{0}\in  {\Omega}\setminus\left\lbrace x: \gamma(x)=0 \right\rbrace$, define
	\[
	\psi_{n}(x)= \begin{cases}
		\frac{1}{\sqrt{\left|B_{\frac{1}{n}}(x_{0})\cap \Omega \right| }}, & x\in B_{\frac{1}{n}}(x_{0})\cap\Omega,\\
		0, & x \in {\Omega}\setminus B_{\frac{1}{n}}(x_{0}).
	\end{cases}
	\]
	By continuity, for sufficiently large $n$ we have $ \psi_{n}\in L^2(\Omega)\setminus\varUpsilon$, and
	\[
	\lim\limits_{n\to +\infty} \dfrac{ \int_{\Omega}\frac{K\beta(x)}{m(x) + K}\psi^2(x)\, \mathrm{d}x}{\int_{\Omega} \gamma(x)\psi^2(x)\, \mathrm{d}x}= \frac{K \beta(x_{0})}{\gamma(x_{0})(m(x_{0})+K)}.
	\]
	This implies
	$$
	\sup_{\substack{\varphi \in L^2(\Omega)\setminus\varUpsilon}}\frac{\int_{\Omega} \frac{K\beta(x)}{m(x) + K}\varphi^2(x)\, \mathrm{d}x}{\int_{\Omega} \gamma(x)\varphi^2(x)\, \mathrm{d}x}\ge F.
	$$
	
	If $F=+\infty$, the conclusion is immediate. Consider the case $F<+\infty$.
	Then, whenever $\gamma(x_0)=0$, we must have $\beta(x_0)=0$; otherwise the supremum would be $+\infty$. Consequently, $\frac{K\beta(x)}{m(x) + K}\le F \gamma(x) $ for all $x\in \Omega$. Hence
	$$
	\sup\limits_{\substack{\varphi \in L^2(\Omega)\setminus\varUpsilon}}\frac{\int_{\Omega} \frac{K\beta(x)}{m(x) + K}\varphi^2(x)\, \mathrm{d}x}{\int_{\Omega} \gamma(x)\varphi^2(x)\, \mathrm{d}x}\le F.
	$$
	Thus, the desired equality follows.
	
	We denote
	$$
	\tilde F:= \frac{ \int_{\Omega} \frac{K\beta(x)}{m(x)+K} \, \mathrm{d}x}{\int_{\Omega}\gamma(x)\, \mathrm{d}x}.
	$$
	It is straightforward to verify that
	$
	\mathcal{R}_{0}\ge \tilde F.
	$
	Let  $\varepsilon>0$ be arbitrarily small.
	For  each $d_{I}$, there exists $\phi_{d_{I}}\in L^{2}(\Omega)$ with $\left\| \phi_{d_{I}}\right\| _{L^{2}}=1$ such that
	\[
	\frac{\int_{\Omega} \frac{K\beta(x)}{m(x) + K}\phi_{d_{I}}^2(x)\, \mathrm{d}x}{\frac{d_{I}}{2} \int_{\Omega} \int_{\Omega} J(x - y)\left[ \phi_{d_{I}}(y) - \phi_{d_{I}}(x)\right] ^2 \, \mathrm{d}y\mathrm{d}x + \int_{\Omega} \gamma(x)\phi_{d_{I}}^2(x)\, \mathrm{d}x} \ge\mathcal{R}_{0} -\varepsilon \ge \tilde F -\varepsilon>0.
	\]
	Recall that
	\[
	\frac{d_{I}}{2} \int_{\Omega} \int_{\Omega} J(x - y)\left[\phi_{d_{I}}(y) - \phi_{d_{I}}(x)\right] ^2 \, \mathrm{d}y\mathrm{d}x \ge d_{I}\alpha \int_{\Omega} \left( \phi_{d_{I}}(x)-\tilde \phi_{d_{I}}\right) ^{2}\, \mathrm{d}x,
	\]
	where $\tilde \phi_{d_{I}}:= \frac{1}{\left| \Omega\right| } \int_{\Omega}\phi_{d_{I}}(x) \, \mathrm{d}x.$
	Combining this with the previous inequality yields
	\[
	\int_{\Omega} \left( \phi_{d_{I}}(x)-\tilde \phi_{d_{I}}\right) ^{2}\, \mathrm{d}x\le \dfrac{1}{d_{I} \alpha}\left(  \frac{\int_{\Omega} \frac{K\beta(x)}{m(x) + K}\phi_{d_{I}}^2(x)\, \mathrm{d}x}{\tilde F -\varepsilon} -  \int_{\Omega} \gamma(x)\phi_{d_{I}}^2(x)\, \mathrm{d}x\right).
	\]
	Along a subsequence (still denoted by  $\tilde \phi_{d_{I}}$) we may assume $\tilde \phi_{d_{I}} \to a $ for some $a \in \mathbb{R}$. Hence, $\int_{\Omega} \left( \phi_{d_{I}}(x)-a\right) ^{2}\, \mathrm{d}x \to 0$, and $a^{2}=\frac{1}{\left| \Omega\right| }$. By virtue of this strong convergence, we can pass to the limit in the inequality, obtaining
	\[
	\limsup\limits_{d_{I}\to +\infty} \frac{\int_{\Omega} \frac{K\beta(x)}{m(x) + K}\phi_{d_{I}}^2(x)\, \mathrm{d}x}{\frac{d_{I}}{2} \int_{\Omega} \int_{\Omega} J(x - y)\left[ \phi_{d_{I}}(y) - \phi_{d_{I}}(x)\right] ^2 \, \mathrm{d}y\mathrm{d}x + \int_{\Omega} \gamma(x)\phi_{d_{I}}^2(x)\, \mathrm{d}x} \le  \tilde F.
	\]
	Therefore,
	$$
	\tilde F \le \liminf_{d_{I}\to +\infty} \mathcal{R}_{0} \le  \limsup_{d_{I}\to +\infty} \mathcal{R}_{0} \le \tilde F +\varepsilon.
	$$
	The arbitrariness of $\varepsilon$ completes the proof.

	(ii) According to Theorem \ref{th6.1}, it is immediate that $\mathcal{R}_{0}$ is nondecreasing in $K$.
	It is straightforward to verify that
	$$
	\lim\limits_{K\to +\infty}  \frac{K}{m(x) + K} =1 \text{ uniformly in } x \in \overline{\Omega}.
	$$
	For any $\varepsilon>0$, there exists $N$ such that for all $K\ge N$,
	\[
	\left| \frac{\int_{\Omega} \left( \frac{K}{m(x) + K} -1 \right)\beta(x) \phi^2(x)\, \mathrm{d}x}{\frac{d_{I}}{2} \int_{\Omega} \int_{\Omega} J(x - y)\left[ \phi(y) - \phi(x)\right] ^2 \, \mathrm{d}y\mathrm{d}x + \int_{\Omega} \gamma(x)\phi^2(x)\, \mathrm{d}x}   \right| \le \dfrac{\varepsilon}{- s(\mathcal{\hat L}_{\infty})}, \  \forall \phi\in L^{2}(\Omega).
	\]
	Therefore,
	\[
	\lim\limits_{K\to +\infty} \mathcal{R}_{0} =  \sup_{\substack{\varphi \in L^2(\Omega) \\ \varphi \neq 0}} \frac{\int_{\Omega}\beta(x)\varphi^2(x)\, \mathrm{d}x}{\frac{d_{I}}{2} \int_{\Omega} \int_{\Omega} J(x - y)\left[ \varphi(y) - \varphi(x)\right] ^2 \, \mathrm{d}y\mathrm{d}x + \int_{\Omega} \gamma(x)\varphi^2(x)\, \mathrm{d}x}.
	\]

	If (\textbf{A3}) hold, we can deduce that   $ \mathcal{R}_{0}$ is strictly increasing in $K$, and
	$$
	\lim\limits_{K\to 0}  \frac{K}{m(x) + K} =0 \text{ uniformly in } x \in \overline{\Omega}.
	$$
	Similarly, we can prove that  $\lim\limits_{K\to 0} \mathcal{R}_{0}=0 $.

	Next, we consider the case where (\textbf{A3$'$}) holds.
	By Theorem \ref{th6.1}(i), we have
	\[
	\max\limits_{x \in \overline{\Omega}}\left\lbrace  -d_{I} \int_{\Omega} J(x-y)\, \mathrm{d}y  -\gamma(x)  +\frac{K\beta(x)}{\mathcal{R}_{0} \left( m(x) + K\right) } \right\rbrace \le s(\mathcal{\hat L}_{\mathcal{R}_{0}})=0.
	\]
	Consequently,
	\begin{equation}\label{eq6.3}
		\mathcal{R}_{0}\ge  \max\limits_{x \in \overline{\Omega}} \frac{K\beta(x)}{(m(x)+K)(d_{I}\int_{\Omega}J(x-y)\, \mathrm{d}y+\gamma(x))}\ge  \max\limits_{x\in \varSigma} \frac{\beta(x)}{d_{I}\int_{\Omega}J(x-y)\, \mathrm{d}y+\gamma(x)}.
	\end{equation}
	For any $\varepsilon>0$, define $B_{\varepsilon}(\varSigma):=\left\lbrace x\in \overline{\Omega}: \operatorname{dist}(x,\varSigma) < \varepsilon \right\rbrace $. Then, for $K$ sufficiently small,
	\[
	\left| \int_{\Omega} \frac{K\beta(x)}{m(x) + K}\phi^2(x)\, \mathrm{d}x\right| \le \varepsilon \left\|  \beta\right\|_{Y}+  \max\limits_{x\in \overline B_{\varepsilon}(\varSigma)} \beta(x), \forall \phi\in L^{2}(\Omega) \text{ with } \left\| \phi\right\|_{L^{2}}=1.
	\]
	Hence
	$$
	\lim_{K\to 0}\mathcal{R}_{0} \le \frac{\varepsilon \left\|  \beta\right\|_{Y}+  \max\limits_{x\in \overline B_{\varepsilon}(\varSigma)} \beta(x)}{-s(\mathcal{\hat L}_{\infty})}.
	$$
	Letting $\varepsilon\to 0$, we arrive at
	\[
	\lim_{K \to 0}\mathcal{R}_{0} \le  \max\limits_{x\in \varSigma}\frac{\beta(x)}{-s(\mathcal{\hat L}_{\infty})}.
	\]

	Next, we first consider the special case where $\varSigma:=\left\lbrace x_{0} \right\rbrace $.
	If $\beta(x_{0})=0$,  the desired conclusion follows directly from the previous discussion. Thus, we assume $ \beta(x_{0})>0$.

	For any $\varepsilon>0$, define  $B_{\varepsilon}(x_{0}):= \left\lbrace x\in \overline{\Omega}: \left\| x-x_{0}\right\|< \varepsilon \right\rbrace  $. Then there exists $N=N(\varepsilon)$ such that
	$$
	\left| \frac{K}{m(x) + K}\right| \le \varepsilon \text{  for all } x\in \overline{\Omega}\setminus B_{\varepsilon}(x_{0}) \text{  provided } K\le N.
	$$
	For each $K$, there exists $\phi_{K}\in L^{2}(\Omega)$ with $\left\| \phi_{K}\right\| _{L^{2}(\Omega)}=1$ such that
	\[
	\frac{\int_{\Omega} \frac{K\beta(x)}{m(x) + K}\phi_{K}^2(x)\, \mathrm{d}x}{\frac{d_{I}}{2} \int_{\Omega} \int_{\Omega} J(x - y)\left[ \phi_{K}(y) - \phi_{K}(x)\right] ^2 \, \mathrm{d}y\mathrm{d}x + \int_{\Omega} \gamma(x)\phi_{K}^2(x)\, \mathrm{d}x} \ge\mathcal{R}_{0} -\varepsilon.
	\]
	Write $\phi_{K}=\phi_{K,\varepsilon}+\phi_{K,\varepsilon}^{c}$, where $ \phi_{K,\varepsilon}(x)=\phi_{K}(x)$ for $x\in B_{\varepsilon}(x_{0})$ and $\phi_{K,\varepsilon}(x)=0 $  for $x\in \Omega\setminus B_{\varepsilon}(x_{0})$. Set
	$$
	\eta_{K,\varepsilon}=\int_{B_{\varepsilon}(x_{0})} \phi_{K,\varepsilon}^{2}(x)\, \mathrm{d}x \in [0,1].
	$$
	By straightforward calculation, we obtain the following estimates:
	\[
	\begin{aligned}
		\left| \int_{\Omega} \frac{K\beta(x)}{m(x) + K}\phi_{K}^2(x)\, \mathrm{d}x\right| \! \le & \varepsilon\int_{\Omega\setminus B_{\varepsilon}(x_{0})}\! \left( \phi_{K,\varepsilon}^{c}(x)\right)^{2}\, \mathrm{d}x \! + \! \max\limits_{x\in \overline B_{\varepsilon}(x_{0})} \beta(x) \int_{B_{\varepsilon}(x_{0})} \phi_{K,\varepsilon}^{2}(x)\, \mathrm{d}x\\[0.4em]
		=& \varepsilon (1- \eta_{K,\varepsilon}) + \eta_{K,\varepsilon}  \max\limits_{x\in \overline B_{\varepsilon}(x_{0})}\beta(x),
	\end{aligned}
	\]
	and
	\[
	\begin{aligned}
		& \frac{d_{I}}{2} \int_{\Omega} \int_{\Omega} J(x - y)\left[ \phi_{K}(y) - \phi_{K}(x)\right] ^2 \, \mathrm{d}y\mathrm{d}x + \int_{\Omega} \gamma(x)\phi_{K}^2(x)\, \mathrm{d}x\\
		= & d_{I}\int_{\Omega\setminus B_{\varepsilon}(x_{0})}\int_{\Omega}J(x-y) \, \mathrm{d}y\left( \phi_{K,\varepsilon}^{c}(x)\right)^{2}\, \mathrm{d}x + \int_{\Omega\setminus B_{\varepsilon}(x_{0})} \gamma(x) \left( \phi_{K,\varepsilon}^{c}(x)\right)^{2}\, \mathrm{d}x\\
		& + d_{I}\int_{B_{\varepsilon}(x_{0})}\int_{\Omega}J(x-y) \, \mathrm{d}y \phi_{K,\varepsilon}^{2}(x)\, \mathrm{d}x + \int_{B_{\varepsilon}(x_{0})} \gamma(x)  \phi_{K,\varepsilon}^{2}(x)\, \mathrm{d}x\\
		& - d_{I} \int_{\Omega\setminus B_{\varepsilon}(x_{0})}\int_{\Omega\setminus B_{\varepsilon}(x_{0})}J(x-y)  \phi_{K,\varepsilon}^{c}(y)\phi_{K,\varepsilon}^{c} (x) \, \mathrm{d}y \mathrm{d}x \\
		&- d_{I} \int_{B_{\varepsilon}(x_{0})}\int_{ B_{\varepsilon}(x_{0})}J(x-y)  \phi_{K,\varepsilon}(y)\phi_{K,\varepsilon}(x) \, \mathrm{d}y \mathrm{d}x \\
		& - 2d_{I} \int_{B_{\varepsilon}(x_{0})}\int_{\Omega} J(x-y)\phi_{K,\varepsilon}^{c}(y)\phi_{K,\varepsilon}(x) \, \mathrm{d}y \mathrm{d}x\\
		\ge & -s(\mathcal{\hat L}_{\infty})(1-\eta_{K,\varepsilon})+ \eta_{K,\varepsilon}\left( d_{I} \min\limits_{x\in B_{\varepsilon}(x_{0})}\int_{\Omega}J(x-y) \, \mathrm{d}y +  \min\limits_{x\in B_{\varepsilon}(x_{0})} \gamma(x) \right) \\
		& - d_{I} \left\| J\right\| _{L^{\infty}}\left| B_{\varepsilon}(x_{0})\right| -2d_{I} \left\| J\right\| _{L^{\infty}}\left| \Omega\right|^{\frac{1}{2}} \left| B_{\varepsilon}(x_{0})\right|^{\frac{1}{2}}.
	\end{aligned}
	\]
	Denote
	$$
	\kappa(\varepsilon):=  d_{I} \left\| J\right\| _{L^{\infty}}\left| B_{\varepsilon}(x_{0})\right| +2d_{I} \left\| J\right\| _{L^{\infty}}\left| \Omega\right|^{\frac{1}{2}} \left| B_{\varepsilon}(x_{0})\right|^{\frac{1}{2}}.
	$$
	Since $-s(\mathcal{\hat L}_{\infty})>0$, $\lim\limits_{\varepsilon\to 0}\kappa(\varepsilon)=0$, and
	\[
	\lim_{\varepsilon\to 0} \left(  \min\limits_{x\in B_{\varepsilon}(x_{0})} d_{I}\int_{\Omega}J(x-y) \, \mathrm{d}y +  \min\limits_{x\in B_{\varepsilon}(x_{0})} \gamma(x) \right) = d_{I}\int_{\Omega}J(x_{0}-y) \, \mathrm{d}y +  \gamma(x_{0})>0,
	\]
	there exists $\varepsilon_{0}$, such that for $\varepsilon\le\varepsilon_{0} $,
	\[
	-s(\mathcal{\hat L}_{\infty})-\kappa(\varepsilon)>0, \text{ and }
	\left(  \min\limits_{x\in B_{\varepsilon}(x_{0})} d_{I}\int_{\Omega}J(x-y) \, \mathrm{d}y +  \min\limits_{x\in B_{\varepsilon}(x_{0})} \gamma(x) \right) -\kappa(\varepsilon)>0.
	\]
	Notice that the denominator can be rewritten as
	\[
	\big[-s(\hat{\mathcal{L}}_\infty)-\kappa(\varepsilon)\big](1-\eta_{K,\varepsilon}) + \Big( d_I \min\limits_{x\in B_\varepsilon(x_0)}\int_\Omega J(x-y)\,dy +  \min\limits_{x\in B_\varepsilon(x_0)}\gamma(x) - \kappa(\varepsilon)\Big)\eta_{K,\varepsilon}.
	\]
	Using the elementary inequality $\frac{\alpha(1-\eta)+\beta\eta}{\gamma(1-\eta)+\delta\eta} \le \max\{\frac{\alpha}{\gamma},\frac{\beta}{\delta}\}$ for $\alpha,\beta,\gamma,\delta>0$ and $\eta\in[0,1]$, we obtain
	\[
	\begin{aligned}
		& \frac{\int_{\Omega} \frac{K\beta(x)}{m(x) + K}\phi_{K}^2(x)\, \mathrm{d}x}{\frac{d_{I}}{2} \int_{\Omega} \int_{\Omega} J(x - y)\left[ \phi_{K}(y) - \phi_{K}(x)\right] ^2 \, \mathrm{d}y\mathrm{d}x + \int_{\Omega} \gamma(x)\phi_{K}^2(x)\, \mathrm{d}x} \\[0.4em]
		\le & \dfrac{\varepsilon (1- \eta_{K,\varepsilon}) + \eta_{K,\varepsilon}  \max\limits_{x\in \overline B_{\varepsilon}(x_{0})}\beta(x)}{
			-s(\mathcal{\hat L}_{\infty})(1-\eta_{K,\varepsilon})  + \eta_{K,\varepsilon}\Bigl( d_{I} \min\limits_{x\in B_{\varepsilon}(x_{0})}\int_{\Omega}J(x-y) \, \mathrm{d}y +  \min\limits_{x\in B_{\varepsilon}(x_{0})} \gamma(x) \Bigr)  - \kappa(\varepsilon)
		}\\[0.4em]
		\le & \max\left\lbrace   \frac{\varepsilon}{
			-s(\mathcal{\hat L}_{\infty}) - \kappa(\varepsilon)
		}, \frac{ \max\limits_{x\in \overline B_{\varepsilon}(x_{0})}\beta(x)}{
			\Bigl( d_{I} \min\limits_{x\in B_{\varepsilon}(x_{0})}\int_{\Omega}J(x-y) \, \mathrm{d}y +  \min\limits_{x\in B_{\varepsilon}(x_{0})} \gamma(x) \Bigr) -
			\kappa(\varepsilon)
		} \right\rbrace.
	\end{aligned}
	\]
	Now, choose \(\varepsilon_1\le\varepsilon_0\) such that for all \(\varepsilon\le\varepsilon_1\),
	\[
	\frac{\varepsilon}{
		-s(\mathcal{\hat L}_{\infty}) - \kappa(\varepsilon)
	} \le  \frac{ \max\limits_{x\in \overline B_{\varepsilon}(x_{0})}\beta(x)}{
		\Bigl( d_{I} \min\limits_{x\in B_{\varepsilon}(x_{0})}\int_{\Omega}J(x-y) \, \mathrm{d}y +  \min\limits_{x\in B_{\varepsilon}(x_{0})} \gamma(x) \Bigr)
		-  \kappa(\varepsilon)
	}.
	\]
	Then, for such \(\varepsilon\) and for all \(K\le N(\varepsilon)\), we have
	\[
	\mathcal{R}_0 - \varepsilon \le \frac{ \max\limits_{x\in\overline{B}_\varepsilon(x_0)}\beta(x)}{ d_I \min\limits_{x\in B_\varepsilon(x_0)}\int_\Omega J(x-y)\,dy +  \min\limits_{x\in B_\varepsilon(x_0)}\gamma(x) - \kappa(\varepsilon) },
	\]
	and consequently
	\[
	\limsup_{K\to0}\mathcal{R}_0 \le \frac{ \max\limits_{x\in\overline{B}_\varepsilon(x_0)}\beta(x)}{ d_I \min\limits_{x\in B_\varepsilon(x_0)}\int_\Omega J(x-y)\,dy +  \min\limits_{x\in B_\varepsilon(x_0)}\gamma(x) - \kappa(\varepsilon) } + \varepsilon.
	\]
	Letting \(\varepsilon\to0\) and using the continuity of the involved functions, we obtain
	\[
	\limsup_{K\to 0} \mathcal{R}_{0}\le \frac{\beta(x_{0})}{d_{I}\int_{\Omega}J(x_{0}-y) \, \mathrm{d}y +  \gamma(x_{0})}.
	\]
	By a similar argument, this estimate extends to the case where \(\Sigma\) is a finite set:
	\begin{equation}\label{eq6.5}
		\limsup_{K\to 0} \mathcal{R}_{0}\le  \max\limits_{x\in \varSigma} \frac{\beta(x)}{d_{I}\int_{\Omega}J(x-y) \, \mathrm{d}y +  \gamma(x)}.
	\end{equation}
	Combining with \eqref{eq6.3} and \eqref{eq6.5}, we obtain
	$$
	\lim\limits_{K\to 0} \mathcal{R}_{0}=  \max\limits_{x\in \varSigma} \frac{\beta(x)}{d_{I}\int_{\Omega}J(x-y) \, \mathrm{d}y +  \gamma(x)}.
	$$
\end{proof}

We note that when $\beta$, $\gamma$, and $m$ are all positive functions, the conclusions of Theorem \ref{th6.2} were previously established by Feng et al.~\cite[Theorem 2.1]{Feng_Li_Yang_2024}. Their proof relies on the theory developed by Zhang and Zhao~\cite{MR4348165} and characterizes the limit of the basic reproduction ratio through the asymptotic profiles of the corresponding principal eigenvalues. In particular, as $K\to\infty$, the limit is given by the spectral radius of a related operator, in contrast to the variational characterizations derived earlier in this paper.

Here, we further extend these conclusions to the degenerate case where the coefficient functions are allowed to vanish. Regarding the first statement in Theorem \ref{th6.2}(i), the limit may become infinite when $\gamma(x)$ possesses zeros, representing a critical state. Another particularly noteworthy scenario occurs when both $\beta$ and $\gamma$ vanish at the same point $x_0$; in this case, the limiting value is determined by $\limsup\limits_{x \to x_0} \frac{\beta(x)}{\gamma(x)}$.

It is important to note that in the degenerate case, the limiting behavior as $K \to 0$ differs fundamentally between nonlocal and local diffusion. For local diffusion, it follows from Gao et al.~\cite{MR4687008} that if $\varSigma$ is a null set, the limit of the basic reproduction ratio is zero. This conclusion, however, fails for nonlocal dispersal. In fact, if $\beta$ is positive at any point in $\varSigma$, the basic reproduction ratio admits a positive lower bound. This discrepancy stems from the fact that the Laplace operator is indefinite; consequently, its spectrum is not necessarily bounded below by the product operator. In contrast, the nonlocal dispersal operator is positive, which elevates the spectral bound and thereby yields a positive lower bound.

Furthermore, the compactness properties of local diffusion guarantee the existence of a principal eigenvalue, and the corresponding eigenfunctions admit a convergent subsequence. This precludes the possibility that the mass of the eigenfunctions concentrates at degenerate points in the limit, thereby facilitating a straightforward characterization of the limit. In contrast, the proof of Theorem \ref{th6.2} for the case where $\varSigma$ is a finite set reveals that  such concentration phenomena can indeed occur in the nonlocal dispersal case, leading to notably different conclusions.

\section*{Availability of data and material}
Not applicable.

\section*{Competing interests}
The authors declare that they have no competing interests.

\section*{Acknowledgments}
This work  was supported  by the National Natural Science Foundation of China (Nos. 12471176 and 12071491) and Guangdong Basic and Applied Basic Research Foundation (No. 2025A1515012221).  X. Lin was additionally supported  by the China Postdoctoral Science Foundation (No. 2025M783159).


\end{document}